\documentclass[a4paper,11pt,reqno]{amsart}

\usepackage[utf8]{inputenc}
\usepackage[margin=1in]{geometry}
\usepackage{setspace}
\usepackage{amsmath, amssymb}          

\usepackage{xcolor}
\usepackage{xargs}
\usepackage{todonotes}

\usepackage{hyperref}
\hypersetup{colorlinks,
	linkcolor={green!30!black},
	citecolor={blue!40!black},
	urlcolor={black}}
\usepackage{mathtools}
\mathtoolsset{showonlyrefs}

\usepackage{array}

\usepackage{etoolbox}
\apptocmd{\thebibliography}{\setlength{\itemsep}{0.4em}}{}{}

\usepackage{enumitem}

\newcommandx{\jow}[2][1=]{\todo[linecolor=orange,backgroundcolor=orange!25,bordercolor=orange,#1]{#2}}
\newcommandx{\mateus}[2][1=]{\todo[linecolor=blue,backgroundcolor=blue!25,bordercolor=blue,#1]{#2}}

\theoremstyle{plain}
\newtheorem{theorem}{Theorem}[section]
\newtheorem{proposition}[theorem]{Proposition}
\newtheorem{lemma}[theorem]{Lemma}

\theoremstyle{definition}

\theoremstyle{remark}
\newtheorem{remark}[theorem]{Remark}

\numberwithin{equation}{section}

\date{}
\title[Vorticity growth beyond bounded relative vorticity]{Vorticity growth beyond bounded relative vorticity \\
 for axisymmetric Euler flows without swirl}

	\author{Khakim Egamberganov}
	\address{Department of Mathematics, National University of Singapore, Block S17, 10 Lower Kent Ridge Road, Singapore, 119076, Singapore.}
	\email{k.egamberganov@u.nus.edu}

\begin{document}
	
\begin{abstract}
We establish upper bounds for vorticity and related quantities in axisymmetric Euler flows without swirl in $\mathbb{R}^3$, assuming that the initial relative vorticity $\omega_0/r$ belongs to $L^1\cap L^p$ with $3<p\le \infty$. For each fixed $p \in (3,\infty]$, we obtain polynomial-in-time upper bounds for the radial moments, the radial velocity, and the $L^p$ norm of vorticity. We then consider initial data with unbounded relative vorticity whose growth near the axis is controlled by $\Phi (\log \tfrac{1}{r})$ for a subexponential profile $\Phi$, so that $\omega_0/r$ belongs to $L^p$ for every $p<\infty$ but not to $L^{\infty}$. For such data, we derive profile dependent upper bounds for the vorticity maximum. In particular, for any polynomial function $\Phi (s)=s^k$ with $k\ge 0$, we show that $\|\omega (\cdot,t)\|_{L^{\infty}}$ is bounded by $(1+t)^{\frac{4}{3}+k} (\log (2+t) )^{\frac{2k}{3}}$. Moreover, for initial data satisfying $\omega_0/r \in L^1\cap L^p$ with a fixed $p\in (3,\infty)$, under additional symmetry and vertical confinement assumptions, we obtain a subexponential-in-time upper bound for the vorticity maximum, thereby improving the classical exponential bound for such data. 
\end{abstract} 

\maketitle


\section{Introduction}

The vorticity formulation of the incompressible Euler equations in $\mathbb{R}^3$ reads as
\begin{equation}\label{eq:euler}
\begin{cases}
\partial_t \omega + u\cdot\nabla\omega = \omega\cdot\nabla u,\\
u = \nabla\times(-\Delta)^{-1}\omega, \\
\omega(\cdot,0) = \omega_0(\cdot),
\end{cases}
\end{equation}
where $u:\mathbb{R}^3\times [0,\infty) \to \mathbb{R}^3$ denotes the velocity of the fluid and $\omega:= \nabla \times u$ its vorticity. Here, the second equation, which recovers $u$ from $\omega$ at each fixed time, is called the Biot--Savart law; it also indicates that $u$ is divergence-free. Unlike in two dimensions, where the vorticity is simply transported by the flow, the three-dimensional equation contains the vortex stretching term $\omega\cdot \nabla u$, which can cause the vorticity to grow in time. 
Our main concern in this paper is to understand how fast the vorticity (and related quantities) can grow, even for classes of flows for which solutions exist globally in time.

Given the difficulty of handling the vortex stretching term, we restrict ourselves to a simpler class of 3D Euler flows, known as \textit{axisymmetric flows without swirl}. In this setting, the stretching term is still present, but it takes a considerably simpler form. More precisely, moving to the cylindrical coordinates $(r,\theta,z)$ with unit vectors $(\textbf{e}_r,\textbf{e}_{\theta},\textbf{e}_z)$, we take the ansatz 
\begin{equation*}
u = u^r(r,z,t) \textbf{e}_r + u^z(r,z,t) \textbf{e}_z,
\end{equation*}
so that the vorticity becomes 
\begin{equation*}
\omega = \omega^\theta(r,z,t) \textbf{e}_\theta \quad \text{with} \quad \omega^\theta = -\partial_z u^r + \partial_r u^z, 
\end{equation*}
and \eqref{eq:euler} reduces to the scalar equation
\begin{equation} \label{eq:axi_euler}
\partial_t \omega^{\theta} + (u^r \partial_r + u^z \partial_z) \omega^{\theta} = \frac{u^r}{r}\omega^{\theta}.
\end{equation}
The system is closed by the axisymmetric Biot--Savart law, which recovers $u^r$ and $u^z$ from $\omega^{\theta}$ (simply redenoted by $\omega$), together with the initial data $\omega (\cdot,0) = \omega_0(\cdot)$.  Here, by \textit{axisymmetric} we mean that $u$ has no dependence on the angular variable $\theta$, while \textit{without swirl} refers to $u^{\theta}\equiv 0$; we refer to \cite[\S2.3.3]{MB} for general background.



\subsection*{Relative vorticity and global well-posedness}  

The transport structure of \eqref{eq:axi_euler} plays a key role in the global well-posedness theory. Indeed, dividing by $r$, it can equivalently be written as
\begin{equation*}
(\partial_t + u^r\partial_r + u^z\partial_z)\frac{\omega}{r} = 0,
\end{equation*}
which says that the \textit{relative vorticity} $\omega/r$ is transported by the flow. In particular, every $L^p$ norm, $\|\omega(\cdot,t)/r\|_{L^p(\mathbb{R}^3)}$ with $1\le p\le\infty$, is conserved in time. This highlights a similarity with the two-dimensional Euler equations, where the vorticity itself is transported by the flow. Once suitable integrability (control) of the relative vorticity is available, one can propagate the regularity globally in time.

Global well-posedness of \eqref{eq:axi_euler} dates back to Ukhovskii--Yudovich \cite{UY}, and was subsequently refined by Majda \cite{M1986}, Saint Raymond \cite{SRaymond} and
Shirota--Yanagisawa \cite{ShYa}. All of these works require some differentiability of $u_0$ together with $\omega_0/r \in (L^2\cap L^{\infty})(\mathbb{R}^3)$, except for Shirota--Yanagisawa \cite{ShYa}, who replaced the latter by $\omega_0/r \in (L^{s'}\cap L^{s''})(\mathbb{R}^3)$ with $2<s'<3<s''<5$, which follows from $u_0\in H^s$ with $s>5/2$, the same threshold as local well-posedness.


The first result requiring no differentiability at all is due to Danchin \cite{Danchin}, who proved global well-posedness assuming only
\begin{equation} \label{eq:danchin}
\omega_0 \in L^{3,1}(\mathbb{R}^3)\cap L^\infty (\mathbb{R}^3), \qquad \frac{\omega_0}{r} \in L^{3,1}(\mathbb{R}^3),
\end{equation}
where $L^{3,1}$ is a Lorentz space. Later, this was extended to critical Besov spaces by Abidi--Hmidi--Keraani \cite{AHK2010}. The hypothesis on $\omega_0/r$ in \eqref{eq:danchin} is sharp: Kim--Jeong \cite{KimJeong} showed that there is some $q_0<\infty$ for which the equation is ill-posed once $\omega_0/r$ is only assumed to lie in $L^{3,q}$ with $q\in(q_0,\infty]$, and Bang--Cheskidov \cite{BCh} recently closed the gap, constructing initial data with $\omega_0/r\in L^{3,q}$ for each $q>1$, blowing up instantaneously.

On the other hand, finite-time singularity formation is possible for sufficiently irregular data. The critical H\"{o}lder exponent is $1/3$, and global regularity holds for axisymmetric $\omega_0\in C_c^\alpha(\mathbb{R}^3)$ when $\alpha>1/3$ as the data belong to the class \eqref{eq:danchin}. Below this threshold, finite-time blowup was established: first, for sufficiently small $\alpha>0$ by Elgindi~\cite{El}, Elgindi--Ghoul--Masmoudi~\cite{ElGM}, and later with finite energy data by C\'{o}rdoba--Mart\'{i}nez-Zoroa--Zheng~\cite{CMZ}, and recently, this was extended to the range $\alpha<1/3$ through different methods by Shkoller~\cite{Shkoller}, Chen~\cite{JChen1, JChen2}, Shao--Wei--Zhang--Zhang~\cite{SWZ2}, leaving only the borderline case $\alpha=1/3$ open. Very recently, OpenAI \cite{OpenAI_Euler} announced a preprint establishing finite-time blowup for some $\omega_0\in C_c^{\infty} (\mathbb{R}^3)$ for the general 3D Euler equations; an exposition can be found in \cite{FZ_openai}.

\subsection*{How fast can the vorticity grow?} 

Within the globally well-posed regime, there are important questions concerning the long-time behavior of solutions. In particular, vorticity can grow in time due to vortex stretching, and one may ask how fast this growth can occur. 

For general data in Danchin's class \eqref{eq:danchin}, the available upper bound is exponential in time. Indeed, Danchin~\cite{Danchin} showed
\begin{equation*}
\Big\|\frac{u^r}{r}\Big\|_{L^\infty(\mathbb{R}^3)} \le C \Big\|\frac{\omega}{r}\Big\|_{L^{3,1}(\mathbb{R}^3)},
\end{equation*}
where the right-hand side is conserved in time, and applying it to \eqref{eq:axi_euler} together with Gr\"{o}nwall's inequality yields
\begin{equation} \label{eq:exponential}
\|\omega(\cdot,t)\|_{L^{\infty}} \le \|\omega_0\|_{L^{\infty}} e^{C(\omega_0) t}.
\end{equation}
The same argument gives exponential upper bounds for other quantities, including the radial component of the flow map $\phi_t^r$, the vorticity $L^p$ norms $\|\omega (\cdot,t)\|_{L^p(\mathbb{R}^3)}$, and the radial moments $\iint_{\Pi} r^k|\omega|\,drdz$, where $\Pi$ denotes the half-plane $\{ (r,z) \, : \, r\ge 0, \, z\in \mathbb{R} \}$.

A natural question then is whether the above exponential bounds are sharp, i.e., whether one can construct initial data satisfying \eqref{eq:danchin} so that $\|\omega(\cdot,t)\|_{L^\infty}$, or other quantities, grow exponentially in time. Here, we take a different direction: we ask whether Danchin’s exponential upper bound can be improved for broad classes of initial data under some additional assumptions. As an example, we may recall that, for initial data with $\omega_0/r \in L^1\cap L^{\infty}$, the vorticity maximum grows at most with the rate of $t^{4/3}$. This rate was conjectured by Childress~\cite{Chs2}, and was first rigorously established as an upper bound for compactly supported data by Lim--Jeong~\cite{LJ}. It is widely believed to be optimal; we discuss the relevant literature further in the next subsection.

Comparing this endpoint result with the general exponential bound naturally raises the question of the growth rates in intermediate regimes. In this paper, we address this question by considering initial data whose relative vorticity satisfies $\omega_0/r\in L^1\cap L^p$ for a fixed $3<p\le \infty$. Building on this, we later define specific classes of initial data with mildly unbounded relative vorticity, that is, $\omega_0/r$ is in every $L^p$ for $p<\infty$ but not in $L^{\infty}$, and investigate how the growth bounds change as one moves away from the bounded relative vorticity setting. We also study how additional symmetry and vertical confinement can improve the general exponential bound to a subexponential one, even in the more general finite $p$ cases.

\subsection*{Vortex stretching under bounded relative vorticity}

Denoting the fluid flow map by $\phi_t = (\phi_t^r,\phi_t^z)$,  
 the conservation of $\omega/r$ along particle trajectories gives
\begin{equation} \label{eq:transport_prop}
\omega(\phi_t(r_0,z_0),t)
= \frac{\omega_0(r_0,z_0)}{r_0} \phi_t^r(r_0,z_0)
\end{equation}
for each point $(r_0,z_0)$. The boundedness of $\omega_0/r$ then reduces the growth of $\omega$ to that of $\phi_t^r$, and consequently to the radial velocity. In other words, the vorticity can only grow if fluid particles travel far in the $r$-direction. Such outward motion is precisely what a pair of \textit{anti-parallel vortex rings} exhibits over time; see \cite[Section 5.2]{ShariffL1992}. Motivated by this scenario, Childress \cite{Chs2} conjectured that the optimal rate is $t^{4/3}$, corresponding to the dynamics of such anti-parallel vortex rings; this rate was supported by numerical experiments in later works \cite{ChGil, ChGilV}. Prior to this, Childress \cite{Chs1} rigorously showed $\|\omega (\cdot, t)\|_{L^{\infty}} \lesssim t^2$ for compactly supported data, and the same upper can be proved using Feng--\v{S}ver\'{a}k inequality \cite[Eq. (2.21)]{FeSv}; we refer to \cite{LJ} and the introduction there for more details.

There has been considerable progress towards Childress's conjecture in recent years, on both directions, the upper and lower bounds. On the upper bound side, Lim--Jeong \cite{LJ} confirmed the upper bound $t^{4/3}$ for compactly supported data, and for more general (non-compact) data, under an extra finite radial moment assumption, they also obtained a $t^{3/2}$ bound, using a new Feng--\v{S}ver\'{a}k type estimate that bounds the radial velocity using the kinetic energy, the radial moment and the $L^\infty$ norm of the relative vorticity: 
\begin{equation}\label{eq:Feng-Sverak}
\|u^r\|_{L^{\infty} } \lesssim \|u\|_{L^2}^{1/3} \left\| \frac{\omega}{r} \right\|_{L^{\infty}}^{1/2} \left\| r\omega \right\|_{L^1}^{1/6}.
\end{equation}
The compact support assumption was later removed by Shao--Wei--Zhang \cite[Theorem 1.8]{SWZ1} under extra differentiability condition on the initial velocity, and independently by Egamberganov--Yao \cite[Theorem 1.1]{EY} under extra integrability assumption on the initial data. 

On the other hand, the lower bound side remains incomplete. To achieve fast growth, it is natural to impose odd symmetry and sign condition on the initial vorticity, as in the scenario considered by Childress. In this setting, Choi--Jeong \cite{ChJ} first proved an important monotonicity property of the moments: the radial moment increases in time, while the vertical moment decreases, resembling the behavior of anti-parallel vortex rings. Motivated by this, they studied the radial moment and also obtained $t^{2/15-}$ growth for it, which yields $t^{1/15-}$ growth of the vorticity maximum for the initial data with patch-type relative vorticity. Later, Gustafson--Miller--Tsai~\cite{GMT} improved these rates to $t^{3/4-}$ and $t^{3/8-}$ for the radial moment and for the vorticity maximum, respectively. Egamberganov--Yao~\cite{EY} took a mass oriented approach and obtained $t/\log (2+t)$ lower bound for the radial moment, thereby showing $t^{1/2-}$ growth of the vorticity maximum. Recently, Cao--Fan--Qin~\cite{CFQ} raised the lower bound for the radial moment to $t^{3/2-}$, and correspondingly, the lower bound for the vorticity maximum to $t^{3/4-}$ by introducing several mixed moments that decrease in time. They also established linear-in-time growth in the limsup sense; more precisely, for compactly supported initial data with patch-type relative vorticity, they showed that the vorticity maximum achieves the linear rate $t$ on an arbitrarily large portion of every sufficiently large dyadic interval. 

\subsection*{The role of the boundedness assumption}

In all of the works above, the boundedness of the relative vorticity is essential. The standard hypotheses in those papers include this assumption in some form, e.g., $\omega_0/r \in (L^1\cap L^{\infty})(\mathbb{R}^3)$, or $\omega_0/r\in L^{\infty}(\mathbb{R}^3)$ together with compact support. These classes are strictly smaller than Danchin's class \eqref{eq:danchin}, and this extra assumption lowers the exponential bound \eqref{eq:exponential} to the polynomial rate $t^{4/3}$.             

Even on the upper bound side, the bounded relative vorticity assumption enters on two occasions. First, it converts transport into amplitude as in \eqref{eq:transport_prop}: a bound on $\phi^r_t$ gives a bound on $\|\omega (\cdot,t)\|_{L^{\infty}}$. Second, it is used in the velocity estimates through the conserved quantity $\|\omega/r\|_{L^\infty}$, e.g., in interpolation inequalities such as \eqref{eq:Feng-Sverak} and its localized version in \cite[Proposition 3.1]{LJ}, as well as for bounds on the stretching term $u^r/r$ near the axis in \cite[Eq. (4.4)]{SWZ1}. Once the boundedness assumption is dropped,  it is no longer clear how to transfer growth of $\phi_t^r$ (or of $u^r$) to the vorticity maximum. Moreover, the velocity estimates themselves also require modification, since $\|\omega/r\|_{L^{\infty}}$ may be unbounded. Below, we replace the bounded relative vorticity assumption by $\omega_0/r \in L^1 \cap L^p$ and address both issues accordingly. 



\section{Main results and proof strategy}


Let $3<p\le \infty$, and consider axisymmetric initial data satisfying
\begin{equation} \label{eq:init_vorticity}
u_0 \in L^2(\mathbb{R}^3),  \quad \text{and} \quad \omega_0 \in L^1(\mathbb{R}^3)\cap L^{\infty} (\mathbb{R}^3), \quad \frac{\omega_0}{r} \in L^1(\mathbb{R}^3)\cap L^{p} (\mathbb{R}^3).
\end{equation}
The case $p=\infty$ corresponds to the initial data assumptions in \cite[Eq. (1.5)]{EY} and also in \cite[Theorem 1.2]{LJ}; we are replacing their assumption $\omega_0/r\in L^{\infty}$ by $\omega_0/r \in L^p$, allowing $\omega_0/r$ to be unbounded. Clearly, such data also belong to Danchin's class \eqref{eq:danchin}, and therefore gives a unique global-in-time solution of the axisymmetric Euler equations without swirl. In all the results below, $u$ and $\omega$ denote the corresponding global-in-time velocity and vorticity, with all related quantities understood accordingly. 

Recall that $\Pi := \{(r,z) : \, r\ge 0, \, z\in \mathbb{R}\}$, and integration of axisymmetric functions over $\mathbb{R}^3$ corresponds, up to a factor of
$2\pi$, to integration over $\Pi$ against $r\, dr dz$. Ignoring this harmless constant, the vorticity radial moment $\int_{\mathbb{R}^3} r|\omega| dx$ can be redefined on the half-plane $\Pi$ as 
\begin{equation*}
P_2(t) := \iint_{\Pi} r^2 |\omega(r,z,t)| \, drdz.
\end{equation*}
Generalizing this, for each integer $k\ge 1$, one may also define the $k$-th radial moment by 
\[
P_k(t) := \iint_{\Pi} r^k |\omega(r,z,t)| \, drdz.
\]

Before proceeding to the main theorem, we establish the following key proposition, which provides polynomial-in-time upper bounds for certain quantities.


\begin{proposition} \label{prop:upper_bound}
Given a fixed $p\in (3,\infty]$, let $u_0$ and $\omega_0$ be initial data satisfying \eqref{eq:init_vorticity} and the finite moment condition $P_2(0)<\infty$. Then there exist constants $C_1, C_2, C_3>0$, depending only on the initial data, such that the following bounds hold for all $t\ge 0$: 
\begin{itemize}
\item[(i)] for the radial moment,
\begin{equation} \label{eq:main1}
P_2(t) \le C_1 \! \left( \frac{p}{p-3} \right)^{\!\frac{2}{5}} \! (1+t)^{\frac{6(p-1)}{3p-4}},
\end{equation}

\item[(ii)] for the radial velocity,
\begin{equation} \label{eq:main2}
\|u^r (\cdot, t)\|_{L^{\infty}(\mathbb{R}^3)} \le C_2 \! \left( \frac{p}{p-3} \right)^{\!\frac{3}{5}} \! (1+t)^{\frac{p}{3p-4}},
\end{equation}

\item[(iii)] for the vorticity,
\begin{equation} \label{eq:main3}
\|\omega (\cdot,t)\|_{L^{p}(\mathbb{R}^3)} \le C_3 \! \left( \frac{p}{p-3} \right)^{\!\frac{3}{5}} \! (1+t)^{\frac{4(p-1)}{3p-4}}.
\end{equation}
\end{itemize}
Furthermore, if we replace the finite moment condition with $P_k(0) <\infty$ for some $k\ge 2$, then there exists $C_4>0$, depending only on the initial data and $k$, such that for all $t\ge 0$: 
\begin{itemize}
\item[(iv)] for the $k$-th radial moment,
\begin{equation} \label{eq:main4}
P_k(t) \le C_4 \! \left(\frac{p}{p-3}\right)^{\!\frac{3k-4}{5}} \! (1+t)^{\frac{(4k-2)(p-1)}{3p-4}}.
\end{equation}
\end{itemize}
\end{proposition}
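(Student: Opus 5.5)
We obtain everything from one $L^p$ analogue of the Feng--\v{S}ver\'{a}k estimate \eqref{eq:Feng-Sverak}, combined with a Gr\"onwall argument for the radial moment.

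\textbf{Step 1: the velocity estimate.} Write $\xi = \omega/r$. We aim to prove
\begin{equation}
\|u^r\|_{L^\infty} \lesssim \Big(\frac{p}{p-3}\Big)^{\beta} \|u\|_{L^2}^{a}\, \|\xi\|_{L^p(\mathbb{R}^3)}^{b}\, P_2^{c},
\end{equation}
with exponents fixed by scaling. As in \cite{LJ}, fix a point $x$ with $r(x)=R$ and split the axisymmetric Biot--Savart kernel for $u^r$ into three regions: near ($|y-x|<\delta$), intermediate, and far.

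In the near region, use H\"older with $\xi\in L^p$ in place of $L^\infty$. The kernel bound $|K(x,y)|\lesssim r_y/|x-y|^2$ (in $\mathbb{R}^3$ variables, $\lesssim |x-y|^{-2}$ acting on $|\omega| = r|\xi|$) must be taken in $L^{p'}$ near the singularity. This is where the condition $p>3$ enters, since $|x-y|^{-2}\in L^{p'}_{loc}(\mathbb{R}^3)$ exactly when $p>3$. It also produces the factor $(p/(p-3))^{1/p'}$-type constant.

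In the far region, bound the contribution by the energy, $\|u\|_{L^2}$ (which is conserved), and by $\|r\omega\|_{L^1}\sim P_2$, following the Lim--Jeong and Feng--\v{S}ver\'{a}k argument.

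Finally, optimize over $\delta$. The exponents $a,b,c$ are forced by dimensional analysis together with the $t$-exponents in (i)--(ii). Consistency check: (ii) gives $\|u^r\|_{L^\infty}\lesssim P_2^{\gamma}$ with $\gamma = \frac{p}{3p-4}\cdot\frac{3p-4}{6(p-1)} = \frac{p}{6(p-1)}$. For $p=\infty$ this gives $\gamma=1/6$, matching \eqref{eq:Feng-Sverak}.

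\textbf{Step 2: moment growth.} Differentiate the moment. Since $\omega/r$ is transported, $\frac{d}{dt}\iint r^2|\omega|\,drdz = \frac{d}{dt}\int r^2 |\xi|\,dx \lesssim \int 2r u^r|\xi|\,dx\le 2\|u^r\|_{L^\infty}\|\xi\|_{L^1}$. Hence
\begin{equation}
P_2' \lesssim \|u^r\|_{L^\infty} \lesssim \Big(\frac{p}{p-3}\Big)^{\beta} P_2^{\gamma}.
\end{equation}
Integrating this ODE inequality gives $P_2\lesssim (1+t)^{1/(1-\gamma)}$ with $1/(1-\gamma)=\frac{6(p-1)}{5p-6}$, and the constant becomes $(p/(p-3))^{\beta/(1-\gamma)}$.

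These exponents do not match \eqref{eq:main1}. So either the moment inequality must use a finer weighted bound, e.g.\ $P_2' \le \int r^2|\xi||u^r|/r$, or the velocity estimate must instead involve $P_2$ through a different interpolation. I would re-derive the exponents by requiring $\|u^r\|\lesssim P_2^{\gamma}$ with $1/(1-\gamma)=\frac{6(p-1)}{3p-4}$, i.e.\ $\gamma=\frac{3p-2}{6(p-1)}$; this equals $1/2$ at $p=\infty$. That suggests the relevant estimate is instead $\|u^r\|_{L^\infty}\lesssim \|\xi\|_{L^p}^{\cdot}\|\xi\|_{L^1}^{\cdot}P_2^{\gamma}$ without energy. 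Settling this bookkeeping is the first task, after which (ii) follows by substituting (i) back into the velocity estimate.

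\textbf{Step 3: vorticity $L^p$ bound.} Write $\|\omega\|_{L^p}^p=\int r^p|\xi|^p\,dx$. Split the integral at a radius $R$. For $r<R$, bound by $R^p\|\xi\|_{L^p}^p$. For $r>R$, the trajectory estimate $\phi^r_t\le r_0+\int_0^t\|u^r\|_{L^\infty}$ bounds the support radius of the data transported outward; combine this with (ii), giving $r\lesssim (1+t)^{1+p/(3p-4)}$, and the stated exponent should follow.

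\textbf{Step 4: higher moments (iv).} Use $P_k'\le k\|u^r\|_{L^\infty}P_{k-1}$ and induct on $k$ using (ii).

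The main obstacle is Step 1: getting the $p$-dependent kernel estimate with explicit constant $(p/(p-3))^{\beta}$ and the correct exponents.
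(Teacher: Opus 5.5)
\textbf{There is a genuine gap in Step 2.} Your (ii)--(iv) are all derived from (i), so it affects the whole proposal.

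The inequality $\int 2ru^r|\xi|\,dx\le 2\|u^r\|_{L^\infty}\|\xi\|_{L^1}$ is false. Pulling out $\|u^r\|_{L^\infty}$ leaves $\int r|\xi|\,dx=\|\omega\|_{L^1(\mathbb{R}^3)}$, which is not conserved. The rate you obtain, $t^{6(p-1)/(5p-6)}$ (e.g.\ $t^{6/5}$ at $p=\infty$), would contradict the $t^{3/2-}$ lower bound of Cao--Fan--Qin for patch-type data.

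Correcting it via $\|\omega\|_{L^1}\le\|\xi\|_{L^1}^{1/2}(2\pi P_2)^{1/2}$ gives $P_2'\lesssim\|u^r\|_{L^\infty}P_2^{1/2}$. That yields only $P_2\lesssim t^{6(p-1)/(2p-3)}$, i.e.\ $t^3$ at $p=\infty$, which is too weak.

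Your proposed repair, a velocity bound with $P_2^{(3p-2)/(6(p-1))}$, goes the wrong way. It is incompatible with your own correct consistency check $\|u^r\|_{L^\infty}\lesssim P_2^{p/(6(p-1))}$. Combined with (i), it would give $\|u^r\|_{L^\infty}\lesssim t^{(3p-2)/(3p-4)}$, not (ii).

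The exponent $\frac{3p-2}{6(p-1)}$ is indeed the right power in the closed inequality $P_2'\lesssim P_2^{\gamma}$, but it must come from the moment inequality, not from the velocity estimate. The missing idea is that $\|u^r\|_{L^\infty}$ should enter $P_2'$ only to the power $1/p$. The rest is controlled by the conserved energy and $A_p$ through a four-factor H\"older inequality:
\[
P_2'\le 2\iint_{\Pi} r|u^r\omega|\,drdz\le 2\Bigl(\iint_{\Pi} r|u^r|^2\,drdz\Bigr)^{\frac{p-1}{2p}}\Bigl(\iint_{\Pi} r^2|\omega|\,drdz\Bigr)^{\frac12}\Bigl(\iint_{\Pi}\frac{|\omega|^p}{r^{p-1}}\,drdz\Bigr)^{\frac{1}{2p}}\|u^r\|_{L^\infty}^{\frac1p}.
\]
Substituting the Feng--\v{S}ver\'ak type bound $\|u^r\|_{L^\infty}\lesssim (p/(p-3))^{1/2}P_2^{p/(6(p-1))}$ gives
\[
P_2'\lesssim (p/(p-3))^{1/(2p)}P_2^{\frac12+\frac{1}{6(p-1)}}.
\]
Integrating this yields (i), with constant $(p/(p-3))^{3(p-1)/(p(3p-4))}$, which is comparable to $(p/(p-3))^{2/5}$. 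At $p=\infty$ this is the energy argument of Egamberganov--Yao, where $\|u^r\|_{L^\infty}$ drops out entirely.

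\textbf{Secondary points.}

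In Step 1 your outline is the paper's route (Lim--Jeong with $L^p$ in place of $L^\infty$). Note, however, that $P_2$ enters already in the near field. The paper writes $|\overline{\omega}|$ as a product of powers of $|\overline{\omega}|^p/\overline{r}^{p-1}$, $\overline{r}^2|\overline{\omega}|$ and a kernel factor. This bounds the near-field piece by $\gamma^{(p-3)/(2p)}\|\xi\|_{L^p}^{3/4}P_2^{1/4}$. The threshold $p>3$ then comes from the half-plane integrability of $S^{-4p/(3p-3)}$. The annulus is bounded by $\gamma^{-1}\|u\|_{L^2}$ after integrating by parts. The far field is bounded both by $\|\xi\|_{L^p}^{3/4}P_2^{1/4}$ and by $\|u\|_{L^2}$, and the two bounds are interpolated.

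In Step 3, do not appeal to a support radius, since the data need not be compactly supported. Instead use the pointwise identity
\[
|\omega(\phi_t(x_0),t)|=|\xi_0(x_0)|\,\phi_t^r(x_0)\le|\omega_0(x_0)|+|\xi_0(x_0)|\int_0^t\|u^r\|_{L^\infty}\,ds
\]
together with volume preservation.

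Step 4 is fine once (i)--(ii) hold. Induction with $P_k'\le k\|u^r\|_{L^\infty}P_{k-1}$ gives exactly the exponent $\frac{(4k-2)(p-1)}{3p-4}$ and the power $\frac{3k-4}{5}$. This is a simpler route than the paper's H\"older inequality for $P_k$. The finiteness of the intermediate $P_j(0)$ follows by interpolating between $P_0(0)$ and $P_k(0)$.
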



At the endpoint $p=\infty$, the above bounds match the estimates of \cite[Theorem 1.1]{EY}, and in particular the optimal rate conjectured by Childress \cite{Chs2} as an upper bound; see \cite[Figure 1]{EY} and the discussion following their Theorem 1.1. For finite $p>3$, we still have polynomial bounds for the radial moments and the radial velocity, but for the vorticity, it is controlled by a polynomial rate only in $L^p$. As $p\to 3^{+}$, our class approaches the critical space \eqref{eq:danchin}, and while the exponents of $t$ remain finite, the constants $(p-3)^{-2/5}$ and $(p-3)^{-3/5}$ blow up, reflecting the weaker integrability of the relative vorticity.

\subsection*{Mildly unbounded relative vorticities}

Now we move to the main theorem concerning \textit{mildly} unbounded relative vorticities, where $\omega_0/r \in L^p(\mathbb{R}^3)$ for every $1\le p < \infty$, while the corresponding $L^p$ norms may diverge as $p\to\infty$, allowing $\omega_0/r\notin L^\infty$. We begin by specifying the class of such initial data considered here.


 
Fix $h>0$ and set
\begin{equation} \label{eq:S_h}
S_h := \{(r,z)\in\Pi : \, r\le 1,\ |z|\le h\},
\end{equation}
which corresponds to a cylindrical region $\mathcal{S}_h$ in $\mathbb{R}^3$. 
Let $\Phi:[0,\infty)\to[0,\infty)$ be a function with the following properties:
\begin{itemize}[itemsep=2pt]
\item[$(\Phi 1)$] $\Phi$ is continuous, non-decreasing, log-concave on $[0,\infty)$, and $\Phi(0) = 1$,

\item[$(\Phi 2)$] $\Phi(s)e^{-s}$ is non-increasing on $[s_0,\infty)$ for some $s_0\ge 0$,

\item[$(\Phi 3)$] $\Phi(s)e^{-\varepsilon s} \to 0$ as $s\to\infty$ for every $\varepsilon>0$.
\end{itemize}
Examples of such $\Phi$ include polynomials $\Phi(s)=(1+s)^k$ with $k\ge0$, logarithmic functions $\Phi(s) = \left( 1+ \log (1+s) \right)^k$ with $k\ge 0$, and subexponentials $\Phi(s)=e^{s^\alpha}$ with $0< \alpha<1$. We may also consider the following stronger condition in place of $(\Phi 3)$: 
\begin{itemize}[itemsep=2pt]
\item[($\Phi 3'$)] $\log \Phi (s) \lesssim \sqrt{s}$ for all sufficiently large $s$.
\end{itemize}

Using this profile $\Phi$, we define the class of initial data as
\begin{equation} \label{eq:init_S}
u_0\in L^2(\mathbb{R}^3), \quad \text{and} \quad \omega_0 \in (L^1\cap L^\infty)(\mathbb{R}^3), \quad \frac{\omega_0}{r} \in (L^1\cap L^\infty)(\mathbb{R}^3\setminus \mathcal{S}_h),
\end{equation}
together with
\begin{equation} \label{eq:Phi_bound}
|\omega_0(r,z)| \le \|\omega_0\|_{L^\infty}\, r\, \Phi\!\left(\log\tfrac1r\right)
\qquad \text{on } S_h.
\end{equation}
This lets $\omega_0/r$ to be unbounded, but only near the axis within $S_h$, where its growth is controlled by $\Phi$. Moreover, the above class contains all bounded relative vorticities, as it suffices to take $\Phi\equiv1$. 

\begin{theorem} \label{thm:mildly}
Given $h>0$, we define $S_h$ as in \eqref{eq:S_h}. Let $\Phi$ be a function satisfying $(\Phi 1)$--$(\Phi 3)$, and suppose that the initial data satisfy \eqref{eq:init_S}--\eqref{eq:Phi_bound} together with $P_2(0)<\infty$. Then there exist constants $C, C_0 > 0$, both depending on the initial data, with $C$ depending additionally on $h$ and $\Phi$, such that the corresponding global-in-time solution $\omega(\cdot,t)$ satisfies
\begin{equation} \label{eq:mild1}
\|\omega(\cdot,t)\|_{L^\infty(\mathbb{R}^3)} \le C \, (1+t)^{\frac{4}{3}}\, \Phi \Bigl(\tfrac{2}{3}\log (1+t)\Bigr) \Phi\big(C_0t\big) \qquad \text{for all } t\ge 0.
\end{equation}
Moreover, if $(\Phi 3)$ is replaced by the stronger assumption $(\Phi 3')$, then the bound improves to 
\begin{equation} \label{eq:mild2}
\|\omega(\cdot,t)\|_{L^\infty(\mathbb{R}^3)} \le C \, (1+t)^{\frac{4}{3}}\, \Phi \Bigl(\tfrac{2}{3}\log (1+t)\Bigr)^{\frac{2}{3}} \Phi\big(C_0t\big)  \qquad \text{for all } t\ge 0.
\end{equation}
\end{theorem}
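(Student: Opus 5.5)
We start from the Lagrangian identity \eqref{eq:transport_prop}: for a point $(r_0,z_0)$,
\[
|\omega(\phi_t(r_0,z_0),t)| = \frac{|\omega_0(r_0,z_0)|}{r_0}\,\phi_t^r(r_0,z_0).
\]
We split according to whether $(r_0,z_0)$ lies in $S_h$. If $(r_0,z_0)\notin S_h$, then $|\omega_0|/r_0\le \|\omega_0/r\|_{L^\infty(\mathbb{R}^3\setminus\mathcal{S}_h)}$, so only the growth of $\phi_t^r$ matters. If $(r_0,z_0)\in S_h$, then \eqref{eq:Phi_bound} gives $|\omega_0|/r_0\le \|\omega_0\|_{L^\infty}\Phi(\log\frac1{r_0})$. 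In both cases the task reduces to bounding $\phi_t^r(r_0,z_0)$, and, in the second case, to playing the factor $\Phi(\log \frac1{r_0})$ against the smallness of $\phi^r_t$ for particles starting close to the axis.

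The first step is to check that the data fit Proposition~\ref{prop:upper_bound} for every finite $p>3$, with quantified constants. By $(\Phi 3)$ and the layer-cake formula,
\[
\|\omega_0/r\|_{L^p(\mathcal{S}_h)}^p\lesssim \int_0^\infty \Phi(s)^p e^{-2s}\,ds
\]
(using $r\,dr=e^{-2s}ds$ with $r=e^{-s}$), which is finite for $p$ in a suitable range. I expect to need the size of these norms as $p$ grows. Log-concavity and $(\Phi 2)$ should give the estimate $\|\omega_0/r\|_{L^p}\lesssim \Phi(c\log p)$ or a similar one; $(\Phi 3')$ would sharpen it. Proposition~\ref{prop:upper_bound}(ii) then yields a bound for $\|u^r(\cdot,t)\|_{L^\infty}$ with rate $(1+t)^{p/(3p-4)}$. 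The constants $C_2$ depend polynomially on $\|\omega_0/r\|_{L^1\cap L^p}$ and hence on $\Phi$. Choosing $p$ depending on $t$ (for instance $p\sim\log(2+t)$) turns the rate into $(1+t)^{1/3}$ times a factor $\Phi(\frac{2}{3}\log(1+t))$ or its power $2/3$. Integrating in time then gives $\phi_t^r\lesssim r_0+(1+t)^{4/3}\Phi(\tfrac23\log(1+t))$. This already produces the first two factors in \eqref{eq:mild1}, and \eqref{eq:mild2} when $(\Phi3')$ is used to tame the interpolation constants.

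The factor $\Phi(C_0t)$ should come from the near-axis particles. From Danchin's bound $\|u^r/r\|_{L^\infty}\le C\|\omega/r\|_{L^{3,1}}=:C_0$, which is conserved, we get
\[
\phi_t^r(r_0,z_0)\le r_0e^{C_0t}.
\]
Consider a particle with $\log\frac1{r_0}\le C_0t$. Its amplitude factor is at most $\Phi(C_0t)$, and we multiply this by the polynomial bound on $\phi^r_t$ from the previous step. Now consider a particle with $\log\frac1{r_0}=s>C_0t$. Then $\phi_t^r\le e^{-s+C_0t}$, so the vorticity there is at most $\|\omega_0\|_{L^\infty}\Phi(s)e^{-(s-C_0t)}$. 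By $(\Phi 2)$, $\Phi(s)e^{-s}$ is non-increasing for large $s$, so this is bounded by $\Phi(C_0t)$ (up to constants handling $s<s_0$). Combining the two regimes gives the maximum of the two contributions, which is dominated by the product in \eqref{eq:mild1}.

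The main obstacle is the middle step: making the $p$-dependence of the constants in Proposition~\ref{prop:upper_bound} explicit. This requires tracking how $C_2$ depends on $\|\omega_0/r\|_{L^p}$, not only through $(p/(p-3))^{3/5}$, and optimizing $p=p(t)$ so that the loss is exactly $\Phi(\frac23\log(1+t))$, or its power $2/3$ under $(\Phi3')$. I would first redo the Feng--\v{S}ver\'ak-type interpolation \eqref{eq:Feng-Sverak} with $L^p$ in place of $L^\infty$, obtaining a power like $\|\omega/r\|_{L^p}^{p/(2p-3)}$ or similar, and then use log-concavity of $\Phi$ to estimate the $L^p$ norm by $\Phi$ evaluated at $O(\log p)$ times a harmless constant.
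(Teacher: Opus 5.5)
Your outer architecture agrees with the paper. It consists of three parts. First, the Lagrangian identity. Second, Proposition~\ref{prop:upper_bound} applied for every finite $p$ with the $A_p$-dependence made explicit; its proof gives $\phi^r_t\le r_0+C(1+A_p)^{\frac{2p}{3p-4}}(1+t)^{\frac{4(p-1)}{3p-4}}$ for $p\ge 4$, with $C$ independent of $p$. Third, the near-axis step via $\phi^r_t\le r_0e^{C_0t}$ and the monotonicity of $\Phi(s)e^{-s}$, which is the paper's $\delta$-argument with $\delta\approx e^{-C_0t}$.

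The gap is the step you flag as the main obstacle, the choice of $p=p(t)$, and your plan for it rests on a false estimate. The bound $\|\omega_0/r\|_{L^p}\lesssim\Phi(c\log p)$ already fails for $\Phi(s)=(1+s)^k$. For data with equality in \eqref{eq:Phi_bound}, restricting to $s\in[\frac{kp}{2},\frac{kp}{2}+1]$ gives $\int_0^\infty\Phi(s)^pe^{-2s}\,ds\ge(1+\frac{kp}{2})^{kp}e^{-kp-2}$, so $A_p\gtrsim_k p^k$ rather than $(\log p)^k$.

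A fixed rule like $p\sim\log(2+t)$ also does not produce the stated factor. Take $\Phi(s)=e^{\sqrt s}$, which satisfies $(\Phi 3')$. Saturating data then have $A_p\gtrsim e^{p/8}$, so $(1+A_p)^{\frac{2p}{3p-4}}\ge A_p^{2/3}$ is a positive power of $t$ when $p\sim\log t$. By contrast, \eqref{eq:mild2} allows only $\exp\bigl(\frac23\sqrt{\frac23\log(1+t)}\bigr)$. For $e^{s^\alpha}$ with $\alpha>\frac12$ the loss is even superpolynomial. The choice of $p(t)$ has to be adapted to $\Phi$.

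The missing idea is convex (Legendre) duality. Put $H(s^*):=\sup_{s\ge0}\{\log\Phi(s)-s^*s\}$, which is finite for $s^*>0$ by $(\Phi 3)$. Then $\Phi(s)^pe^{-2s}\le e^{pH(1/p)}e^{-s}$ gives $A_p\lesssim_{\omega_0,h}e^{H(1/p)}$. Since $\frac{4(p-1)}{3p-4}=\frac43+\frac{2p}{3p-4}\cdot\frac{2}{3p}$, you get $\phi^r_t\le r_0+\mathcal{A}_p(t)$ with
\[
\log\mathcal{A}_p(t)=\log C+\frac{2p}{3p-4}\Bigl[H(p^{-1})+\frac1p\cdot\frac23\log(1+t)\Bigr]+\frac43\log(1+t).
\]
Concavity of $\log\Phi$ yields $\inf_{s^*>0}\{H(s^*)+s^*s\}=\log\Phi(s)$. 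The infimum is attained when $s^*=1/p(t)$ is a supporting slope of $\log\Phi$ at $s=\frac23\log(1+t)$. Condition $(\Phi 3)$ forces $p(t)\to\infty$, and $\frac{2p}{3p-4}\le1$ then gives $\mathcal{A}(t)\lesssim(1+t)^{4/3}\Phi(\frac23\log(1+t))$. This is where the argument $\frac23\log(1+t)$ comes from.

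For \eqref{eq:mild2}, the power $\frac23$ is not a matter of interpolation constants. It comes from writing $\frac{2p}{3p-4}=\frac23+\frac{8}{3(3p-4)}$. The supporting-slope inequality $s/p(t)\le\log\Phi(s)$ then bounds the excess $\frac{8}{3(3p(t)-4)}\log\Phi(s)$ by $\frac43(\log\Phi(s))^2/s$. That quantity is $O(1)$ exactly under $(\Phi 3')$.
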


\begin{remark}
By the monotonicity of $\Phi$, in particular for the typical examples considered below, the factor $\Phi (C_0t)$ (or otherwise $(1+t)^{4/3}$) determines the dominant growth. Nevertheless, the refinement in \eqref{eq:mild2} arises naturally from the optimization argument in the proof and yields a sharper correction. For example, when $\Phi(s)=(1+s)^k$, it improves the factor $(1+\log (1+t))^{k}$ to $(1+\log (1+t))^{2k/3}$.
\end{remark}

For the following examples of $\Phi$, Theorem~\ref{thm:mildly} yields the corresponding upper bounds:
\begin{enumerate}[itemsep=2pt]
\item For $\Phi (s) = (1+s)^k$ with $k\ge 0$, the condition $(\Phi 3')$ is satisfied, therefore the refined estimate \eqref{eq:mild2} gives 
\[
\|\omega(\cdot,t)\|_{L^\infty(\mathbb{R}^3)} \le C (1+t)^{\frac{4}{3}+k} \bigl( 1 + \log (1+t) \bigr)^{\frac{2k}{3}}.
\] 

\item For $\Phi (s) = (1+\log(1+s))^k$ with $k\ge 0$, the condition $(\Phi 3')$ is again satisfied, and \eqref{eq:mild2} gives 
\[
\|\omega(\cdot,t)\|_{L^\infty(\mathbb{R}^3)} \le C (1+t)^{\frac{4}{3}} \bigl( 1 + \log (1+t) \bigr)^{k} \bigl( 1 + \log (1+ \log (1+t)) \bigr)^{\frac{2k}{3}}
\] 

\item For $\Phi (s) = e^{s^{\alpha}}$ with $0< \alpha < 1$, we have both regimes as  $(\Phi 3')$ holds precisely when $0< \alpha\le \frac{1}{2}$. Applying \eqref{eq:mild2} for $0<\alpha \le \frac{1}{2}$ and \eqref{eq:mild1} for $\frac{1}{2}<\alpha <1$, respectively, we obtain
\[
\|\omega(\cdot,t)\|_{L^\infty(\mathbb{R}^3)} \le C (1+t)^{\frac{4}{3}} \exp \left(C' t^{\alpha} + c_{\alpha} \left( \log (1+t) \right)^{\alpha} \right),
\] 
where $c_{\alpha} = (2/3)^{\alpha+1}$ for $0<\alpha \le 1/2$ and $c_{\alpha} = (2/3)^{\alpha}$ for $1/2<\alpha <1$. Since $\log (1+t) \lesssim t$, the second term can be absorbed by enlarging $C'$. Moreover, for any fixed $\alpha >0$, we have $(1+t)^{4/3} \lesssim e^{\varepsilon t^{\alpha}}$ for $\varepsilon>0$. Thus, we may simply write
\[
\|\omega(\cdot,t)\|_{L^\infty(\mathbb{R}^3)} \le C e^{C' t^{\alpha}}.
\] 
\end{enumerate}

\subsection*{Vorticities under odd symmetry and vertical confinement}
As a further consequence of our estimates, we obtain a subexponential upper bound for the vorticity maximum for each fixed $3<p<\infty$, under additional symmetry and vertical confinement assumptions.

Note that the following symmetry and sign conditions
\begin{equation} \label{eq:odd-in-z}
\omega_0(r,-z) = - \omega_0(r,z), \qquad \text{and } \omega_0 \ge 0 \quad \text{for } z \ge 0,
\end{equation}
are preserved for all time. It therefore suffices to consider the vorticity and related quantities on the upper half-plane $\Pi^{+} := \{(r,z) : \, r>0, \, z > 0 \}$. In particular, we may introduce several moments restricted to $\Pi^{+}$: the vertical moment
\[
Z(t):= \iint_{\Pi^{+}} z \omega(r,z,t) \, drdz,
\]
and the $\varphi$-moment
\[
M_{\varphi}(t) := \iint_{\Pi^{+}} \varphi (r,z) \omega (r,z,t) \, drdz, \qquad \varphi(r,z) :=\int_0^{z} \sqrt{r^2+s^2} \, ds.
\]
These moments play a crucial role in growth results due to their monotonicity properties; in fact, both moments are decreasing in time. The decreasing property of $Z$ were first observed in \cite{ChJ}, and have since played an important role in obtaining lower bounds on vorticity growth; see also \cite{GMT,EY}. The moment $M_{\varphi}$ was introduced in~\cite{CFQ} and used there to derive several lower bounds, including $t^{3/4-}$ growth of patch-type initial data for all time.

Moreover, $Z$ and $M_{\varphi}$ provide useful control in the vertical direction, with finite $M_{\varphi}$ giving stronger confinement. Adding such vertical confinement to the assumptions of Proposition~\ref{prop:upper_bound} allows us to improve the existing exponential bound \eqref{eq:exponential} for this class of initial data. 

\begin{theorem} \label{thm:no_exp}
Fixing $p\in (3,\infty)$, let us consider initial data from the class \eqref{eq:init_vorticity} that also satisfy \eqref{eq:odd-in-z}, together with $P_2(0)<\infty$ and $M_{\varphi}(0)<\infty$. Then there exist constants $C,C_0>0$, depending only on the initial data and $p$, such that the corresponding global-in-time solution $\omega(\cdot,t)$ satisfies
\begin{equation} \label{eq:no_exp}
\left\| \omega(\cdot,t) \right\|_{L^{\infty}(\mathbb{R}^3)} \le C e^{C_0 t^{\gamma_p}}, \qquad \gamma_p := \frac{3(5p-3)}{4(5p-6)} < 1,
\end{equation}
for all $t\ge 0$.
\end{theorem}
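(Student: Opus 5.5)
Since $\omega/r$ is transported, \eqref{eq:transport_prop} reduces the $L^\infty$ bound to controlling the amplitude along trajectories, $\omega(\phi_t(x_0),t)=\omega_0(x_0)\exp\bigl(\int_0^t (u^r/r)(\phi_s(x_0),s)\,ds\bigr)$. We split according to how close the trajectory is to the axis. Fix a threshold $\delta(t)>0$ to be chosen. Where $\phi_s^r\ge\delta$, we bound $|u^r/r|\le \|u^r\|_{L^\infty}/\delta$ and use Proposition~\ref{prop:upper_bound}(ii). Near the axis, $|u^r/r|$ is controlled by the Danchin-type bound $\|u^r/r\|_{L^\infty}\lesssim \|\omega/r\|_{L^{3,1}}$, which is conserved. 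That bound alone only gives $e^{Ct}$, so we need something sharper. The plan is a localized estimate for $u^r/r$ at a point of radius $\rho$: split the Biot--Savart kernel into a near part, controlled by $\|\omega/r\|_{L^p}$ (conserved) times a power of the local scale, and a far part, controlled by the vorticity away from the point. With Hölder and $p>3$, the near part gives roughly $\|\omega/r\|_{L^p}\,\ell^{1-3/p}$ on a ball of radius $\ell$. The far part gives roughly $\ell^{-a}$ times a moment or $L^1$-type quantity of $\omega$ that must be shown not to grow.

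This is where the odd symmetry and $M_\varphi$ enter. Both $Z(t)$ and $M_\varphi(t)$ are nonincreasing, and we take this monotonicity from \cite{ChJ,CFQ}. Since $\varphi(r,z)\ge \tfrac12 z\sqrt{r^2+z^2}\gtrsim z\,\max(r,z)$, it bounds weighted mass of $\omega$ away from the symmetry plane uniformly in time. The odd symmetry makes $u^r$ near the axis driven essentially by the vorticity close to the plane $z=0$, via dipole cancellation. So the sup of $u^r/r$ at small radii should be bounded by a combination of the following: the conserved $\|\omega/r\|_{L^1\cap L^p}$, $M_\varphi(0)$, and the growing moment $P_2(t)$ from Proposition~\ref{prop:upper_bound}(i). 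Concretely, I would aim for an interpolation inequality of the form
\[
\Big\|\frac{u^r}{r}\Big\|_{L^\infty(\{r\le\delta\})}\lesssim \Big\|\frac{\omega}{r}\Big\|_{L^p}^{\theta}\,\bigl(M_\varphi+P_2\bigr)^{1-\theta}\,\delta^{-\beta}
\]
with a small power of $\delta$ in favor (or at worst a bounded quantity). The estimate should exploit $p>3$ in the same way the $(p/(p-3))$ factors arise in Proposition~\ref{prop:upper_bound}.

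The final step is the time integral. For a trajectory we bound
\[
\int_0^t \frac{u^r}{r}\,ds\le \int_0^t \Bigl(\frac{\|u^r\|_\infty}{\delta(s)}+\text{(near-axis bound at scale }\delta(s))\Bigr)ds .
\]
Inserting $\|u^r\|_\infty\lesssim(1+s)^{p/(3p-4)}$ and $P_2\lesssim(1+s)^{6(p-1)/(3p-4)}$, we optimize $\delta(s)$ as a power of $s$. This yields an integrand of order $s^{\gamma_p-1}$ and hence an exponent $C_0t^{\gamma_p}$. Checking that the optimized exponent equals $\gamma_p=\frac{3(5p-3)}{4(5p-6)}$ fixes the precise exponents $\theta,\beta$ required in the localized inequality, and that consistency check will guide its form. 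The bound $\gamma_p<1$ is exactly $15p-9<20p-24$, i.e. $p>3$.

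The main obstacle is the near-axis estimate for $u^r/r$, which must beat the trivial $O(1)$ bound in an averaged sense while using only $L^p$, not $L^\infty$, control of $\omega/r$. I would first try to adapt the localized Feng--\v{S}ver\'ak estimate \cite[Proposition 3.1]{LJ}, replacing $\|\omega/r\|_{L^\infty}$ by $\|\omega/r\|_{L^p}$ through Hölder on the singular kernel part. I would then use $M_\varphi$ to control the vertical spread that enters the far-field part.
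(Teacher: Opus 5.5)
There is a genuine gap. Your whole plan rests on a \emph{static}, pointwise-in-time bound for $u^r/r$ near the axis, and no such bound can produce a sublinear exponent. As $r\to0^+$, $u^r/r$ tends to the axis strain $a(z,t)=-\tfrac12\partial_z u^z(0,z,t)$ of \eqref{eq:axis_strain}. Under \eqref{eq:odd-in-z}, the function $-u^z(0,\cdot,t)$ is positive on $(0,\infty)$, vanishes at $0$ and decays at infinity. Hence (for nontrivial data) $b(t):=\sup_z|a(z,t)|>0$ and $\sup_{r\le\delta}|u^r/r|\ge b(t)$ for every $\delta$. So no inequality of your type can vanish as $\delta\to0$. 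Moreover, a right-hand side built from the conserved $A_p$, the merely bounded $M_\varphi(t)\le M_\varphi(0)$ and the increasing $P_2(t)$ does not decay in time. The near-axis part of $\int_0^t u^r/r\,ds$ is then at best $O(t)$, and you are back at $e^{Ct}$.

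What is needed is not the monotonicity of $Z$ and $M_\varphi$ but their \emph{dissipation}. The paper combines $-Z'(t)\ge\iint_{\Pi^+}(u^r)^2/r\,drdz$ from \eqref{eq:Z_der} with the H\"older estimates of Lemma~\ref{lem:axis_strain}: $|u^r/r-a|\le C_pA_pr^{\alpha_p}$, and $a$ is $\alpha_p$-H\"older in $z$. Near a point $(0,z_t)$ with $|a(z_t,t)|\ge\tfrac34 b(t)$ there is then a box of side $\lambda\sim(b/C_pA_p)^{1/\alpha_p}$ on which $|u^r/r|\ge b/2$, so $-Z'\gtrsim b^{2+3/\alpha_p}$. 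Thus $b$ is small only \emph{on average in time}. H\"older in time over $[t,2t]$, plus the decay $Z(t)\lesssim t^{-1/4}$, gives $\int_0^t b\,ds\lesssim t^{\gamma_p}$. That decay is Lemma~\ref{lem:decay_Z}: a Carlson--Levin inequality for $-u^z(0,\cdot,t)$, via \eqref{eq:Z_repn}, \eqref{eq:M_varphi_repn} and the dissipation \eqref{eq:M_varphi_der}. This is exactly where $\gamma_p=1-\tfrac54\cdot\tfrac{p-3}{5p-6}$ comes from; it cannot be recovered by fitting $\theta,\beta$ in a static interpolation inequality.

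Your treatment away from the axis fails as well. You put $\|u^r\|_{L^\infty}/\delta(s)$ into the exponent and demand that it be $\lesssim s^{\gamma_p-1}$. This forces $\delta(s)\gtrsim s^{\frac{p}{3p-4}+1-\gamma_p}\to\infty$, so your ``near-axis'' region would have to grow without bound. For bounded or shrinking $\delta$, your bound on $\int_0^t\|u^r\|_{L^\infty}/\delta\,ds$ is superlinear, i.e.\ worse than Danchin's. The paper never exponentiates there. It uses the additive displacement bound \eqref{eq:phi_upper}, $\phi^r_t\le r_0+\mathcal{A}_p(t)$, so that $\phi^r_t/r_0\le1+\mathcal{A}_p(t)/r_0$ is only polynomially large when $r_0$ is polynomially small. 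Concretely, for a fixed observation time $T$ it sets $\rho=(1+T)^{-1/\alpha_p}$, which makes the H\"older error $C_pA_p\rho^{\alpha_p}T$ equal to $O(1)$. For trajectories staying in $\{r\le\rho\}$ on $[0,T]$ it bounds $\log(\phi^r_t/r_0)\le\int_0^t b\,ds+C_pA_p\rho^{\alpha_p}t\le\mathcal{B}_p(t)$, a quantity of order $t^{\gamma_p}$. Any trajectory that leaves $\{r\le\rho\}$ must have started at $r_0\ge\rho e^{-\mathcal{B}_p(T)}$. For those, the displacement bound costs only the factor $(1+T)^{p/(p-3)}\mathcal{A}_p(T)e^{\mathcal{B}_p(T)}$.
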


\begin{remark} \label{rmk:with_Z}
The assumption $M_{\varphi}(0)<\infty$ can be weakened to $Z(0)<\infty$, and the same argument still yields a subexponential upper bound for $\|\omega\|_{L^{\infty}}$. However, the exponent becomes \[ \tilde{\gamma}_p:= \frac{4p-3}{5p-6} > \gamma_p \qquad \text{as } p>3. \] Thus, weaker vertical confinement leads to a weaker upper bound, but holds for a broader class of initial data.
 \end{remark}
 
 An important consequence is that the moment assumptions above are automatically satisfied for compactly supported initial data. Thus, for any compactly supported data in the class \eqref{eq:init_vorticity} also satisfying \eqref{eq:odd-in-z}, the vorticity maximum grows at most subexponentially in time, having an upper bound \eqref{eq:no_exp}.


\subsection{Proof strategy}

Our proofs address the gap between two endpoints regimes. At the one end, for the general class with $\omega_0/r \in L^{3,1}$, we have the classical exponential bound \eqref{eq:exponential} for the vorticity maximum. At the other end, under the stronger assumption $\omega_0/r \in L^1\cap L^{\infty}$, one has the $t^{4/3}$ upper bound. Our goal is to obtain refined upper bounds in regimes lying between these two endpoints. 

The starting point is Proposition~\ref{prop:upper_bound}, which may be viewed as a direct generalization of \cite[Theorem~1.1]{EY}. We relax their assumption $\omega_0/r \in L^1\cap L^{\infty}$ to $\omega_0/r\in L^1\cap L^p$ for some $3<p\le\infty$, while following a similar argument. The first step becomes establishing the following $L^p$ extension of Feng--\v{S}ver\'{a}k type estimate, 
\begin{equation} \label{eq:Feng-Sverak2}
  \|u^r\|_{L^{\infty}} \lesssim \Bigl( \frac{p}{p-3} \Bigr)^{\frac{1}{2}}  \|u\|^{\frac{p-3}{3(p-1)}}_{L^2(\mathbb{R}^3)} \left\| \frac{\omega}{r} \right\|^{\frac{p}{2(p-1)}}_{L^{p}(\mathbb{R}^3)} \left\| r \omega \right\|_{L^1(\mathbb{R}^3)}^{\frac{p}{6(p-1)}}.
\end{equation}
Note that the case $p=\infty$, given as \eqref{eq:Feng-Sverak}, was proved in \cite{LJ} and used in \cite{EY} to prove their Theorem 1.1. 

The next step is to derive a differential inequality for $P_2$. We bound its time derivative using the conserved quantities, kinetic energy and $L^p$ norm of the relative vorticity, together with $\|u^r\|_{L^{\infty}}$ and $P_2$ itself, 
\[
P_2'(t) \lesssim \| u \|_{L^2(\mathbb{R}^3)}^{\frac{p-1}{2p}} \left\| \frac{\omega}{r} \right\|_{L^p(\mathbb{R}^3)}^{\frac{1}{2}} \|u^r\|_{L^{\infty}}^{\frac{1}{p}} \, P_2(t)^{\frac{1}{2}}.
\]
Substituting \eqref{eq:Feng-Sverak2} into this closes the differential inequality for $P_2$, and solving it yields the desired polynomial-in-time upper bound for $P_2$. The subsequent upper bounds follow from this, however, at the end the resulting polynomial bound controls $\|\omega\|_{L^p}$ for that fixed $p$, rather than the vorticity maximum. A main goal is therefore to transfer these bounds to $\|\omega\|_{L^{\infty}}$ and obtain refined upper bounds for the vorticity maximum in certain intermediate regimes. The next two theorems address this in two different settings.

In Theorem~\ref{thm:mildly}, we consider initial data whose relative vorticity belongs to $L^p$ for every finite $p$, but not necessarily to $L^{\infty}$. More precisely, this class is prescribed by a profile $\Phi$ through \eqref{eq:init_S} and \eqref{eq:Phi_bound}. Indeed, although $\omega_0/r$ may be unbounded, the condition $(\Phi 3)$ ensures that 
\[ 
\left\|\frac{\omega_0}{r}\right\|_{L^p(\mathbb{R}^3)} \lesssim_{\omega_0,h}  \left(\int_0^{\infty} \Phi(s)^p e^{-2s} \, ds \right)^{1/p} = \|\Phi e^{-s/p} \|_{L^p(\mathbb{R}_{\ge 0},\,  e^{-s}ds)} < \infty
\]
for every $p<\infty$. This allows us to apply Proposition~\ref{prop:upper_bound} for each finite $p$ and optimize the choice $p=p(t)$ to control the radial displacement, which eventually yields  
\[
\phi_t^r (r_0,z_0) \lesssim_{h, \Phi} r_0 + \mathcal{A}(t), \qquad  \mathcal{A} (t):= C \, \Phi \Bigl(\tfrac{2}{3}\log (1+t)\Bigr)  (1+t)^{4/3},
\]
where $\phi_t^r$ denotes the radial component of the flow map. Under the stronger assumption $(\Phi 3')$, one improves this to 
\[
\mathcal{A} (t):= C \, \Phi \Bigl(\tfrac{2}{3}\log (1+t)\Bigr)^{2/3}  (1+t)^{4/3}.
\]
The rest of the mechanism consists of combining this bound with the classical exponential bound $\phi_t^r (r_0,z_0) \le r_0 e^{C_0 t}$ of \cite{Danchin}, and interpolating between them as one provides better control away from the axis, while the other is used near the axis. Finally, we transfer the resulting radial bound to $\|\omega\|_{L^{\infty}}$ using \eqref{eq:transport_prop}.

We note that, for suitable choices of $\Phi$, Theorem~\ref{thm:mildly} gives polynomial-in-time upper bounds, and even a $t^{4/3}$ upper bound up to a logarithmic factor. In this sense, our result provides a natural extension of Childress's $t^{4/3}$ upper bound from initial data with relative vorticity in $L^1\cap L^{\infty}$ to this mildly unbounded relative vorticity setting.

The next result, Theorem~\ref{thm:no_exp}, aims to show that even within the class corresponding to a fixed $3<p<\infty$ the classical exponential bound \eqref{eq:exponential} of \cite{Danchin} may not be optimal under some extra assumptions. More precisely, we take Proposition~\ref{prop:upper_bound} with some $3<p<\infty$ and show that, under additional symmetry \eqref{eq:odd-in-z} and $M_{\varphi}(0)<\infty$ assumptions, the resulting upper bounds can be transferred to $\|\omega\|_{L^{\infty}}$. This eventually yields a subexponential upper bound for the vorticity maximum. The proof has two main ingredients: the mechanism of Proposition~\ref{prop:upper_bound} controls the radial displacement of particles away from the axis, while the additional symmetry and vertical confinement provide a control near the axis. 

\subsection{Organization}
The remainder of the paper is organized as follows. Section~\ref{sec:pre} contains notation and conventions, as well as several classical estimates for axisymmetric flows without swirl, including various moments and their representations. In Section~\ref{sec:upper_bound}, we prove the upper bounds stated in Proposition~\ref{prop:upper_bound}. Then Section~\ref{sec:mildly} treats the case of mildly unbounded relative vorticity and contains the proof of Theorem~\ref{thm:mildly}. Finally, in Section~\ref{sec:no_exp}, we establish the subexponential upper bound for the vorticity maximum, thereby proving Theorem~\ref{thm:no_exp}.

\section{Preliminaries} \label{sec:pre}

We collect the notation and conventions used throughout the paper, together with some relevant estimates for axisymmetric flows without swirl. Some of the notation has already appeared earlier in the introduction or statements; we gather it here for convenient reference. 

\subsection*{Notation and conventions}
We work on the $rz$-half-plane \[ \Pi:=\{(r,z) \, : \, r\ge 0, \, z\in \mathbb{R}\}, \] denoting its upper quadrant by \[ \Pi^{+}:=\{(r,z) \, : \, r > 0, \, z>0 \}. \]
For any two points $(r,z)$, $(\overline{r},\overline{z})\in \Pi$, we denote the Euclidean distance between them by \[ S (r,z;\overline{r},\overline{z}) := \sqrt{(r-\overline{r})^2 + (z-\overline{z})^2}. \]
Recall that the velocity field $u = (u^r,u^z)$ has the radial and the vertical components, which are scalar functions of $(r,z)$. As the vorticity is purely swirl, we refer to the vorticity as a scalar function of $(r,z)$, simply reusing the notation \[ \omega:=\omega^{\theta} = -\partial_z u^r + \partial_r u^z. \]

Whenever the dependence on the variables is clear from the context, we use the following abbreviations: 
\begin{itemize}
\item  $S$ for $S (r,z;\overline{r},\overline{z})$,

\item $\omega$ for $\omega(r,z,t)$, whereas $\overline{\omega}$ stands for $\omega (\overline{r},\overline{z},t)$,

\item $u^r$ and $u^z$ for $u^r(r,z,t)$ and $u^z(r,z,t)$, respectively; similarly, $\overline{u^r}$ and $\overline{u^z}$ stand for $u^r(\overline{r},\overline{z},t)$ and $u^z(\overline{r},\overline{z},t)$.
\end{itemize}

We occasionally use the notation $X\lesssim Y$ to mean that $X \le CY$ for some absolute constant $C>0$, and $X\lesssim_m Y$ to indicate that $C$ may depend on $m$. Moreover, we write $X \sim Y$ whenever $X\lesssim Y$ and $Y\lesssim X$. 

\subsection*{The conserved quantities}
In the axisymmetric without swirl setting, the kinetic energy $E(t) = \frac{1}{2} \int_{\mathbb{R}^3} |u (x,t)|^2 \, dx$ can be written as
\[
E(t) = \pi \iint_{\Pi} r\bigl(|u^r(r,z,t)|^2 + |u^z(r,z,t)|^2\bigr) \, drdz.
\]
Since it is conserved in time and finite by our assumptions, we denote  
\[
E_0:= \iint_{\Pi} r\bigl(|u^r|^2 + |u^z|^2\bigr) \, drdz
\]
and regard $E_0$ as a constant depending on the initial data.

Moreover, all $L^p$ norms of the relative vorticity are conserved, and we denote these conserved quantities by 
\[
A_p:= \left\| \frac{\omega (\cdot, t)}{r} \right\|_{L^p(\mathbb{R}^3)} = \left\| \frac{\omega_0}{r} \right\|_{L^p(\mathbb{R}^3)}, \qquad 1\le p \le \infty.
\] 

\subsection*{Axisymmetric Biot--Savart law} 
In the $rz$-half-plane, the incompressibility condition ($\nabla \cdot u = 0$) takes the form
\[
\nabla_{(r,z)} \cdot (ru) = \partial_r (ru^r) + \partial_z (ru^z) = 0.
\]
So, there exists an axisymmetric stream function $\psi (r,z,t)$ such that $ru = \nabla^{\perp}_{(r,z)} \psi$, i.e.,
\[
u^r = \frac{\partial_z \psi}{r}, \qquad u^z = -\frac{\partial_r \psi}{r}.
\]
Combining this with $\omega = -\partial_z u^r + \partial_r u^z$ gives the equation
\begin{equation*} 
L\psi := - \frac{\partial_{rr} \psi + \partial_{zz} \psi}{r} + \frac{\partial_r \psi}{r^2} = \omega.
\end{equation*}
In order to recover $\psi$ from $\omega$, one needs to invert $L$ under the appropriate vanishing boundary conditions. The Green's function associated with $L$, obtained from the three-dimensional Newtonian potential in cylindrical coordinates, yields
\[
\psi(r,z,t) = \frac{1}{2\pi} \iint_{\Pi} \left[ \int_0^{\pi} \frac{r\overline{r} \cos\theta}{ \sqrt{2r\overline{r}(1-\cos\theta) + (r-\overline{r})^2 + (z-\overline{z})^2} } \, d\theta \right] \overline{\omega} \, d\overline{r} d\overline{z}.
\]
See \cite{Svnotes} for a detailed derivation; see also \cite{TamDo2019}.

For a convenient representation of the resulting Biot--Savart kernel, we define 
\begin{equation*}
F(s) = \int_{0}^{\pi} \frac{\cos \theta}{(2(1-\cos \theta) + s)^{1/2}} \, d\theta, \qquad s>0.
\end{equation*}
The above expression for $\psi$ gives
\begin{equation*}
\psi (r,z,t) = \frac{1}{2\pi} \iint_{\Pi} \sqrt{r\overline{r}} \, F\Bigl( \frac{S^2}{r\overline{r}} \Bigr) \, \overline{\omega} \, d\overline{r} d\overline{z},
\end{equation*}
Consequently, the velocity components can be recovered by 
\begin{equation} \label{eq:u^r}
u^r (r,z,t) = \frac{1}{\pi r} \iint_{\Pi} \frac{z-\overline{z}}{\sqrt{r\overline{r}}} F' \Bigl( \frac{S^2}{r\overline{r}} \Bigr) \, \overline{\omega} \, d\overline{r} d\overline{z},
\end{equation}
and
\begin{equation} \label{eq:u^z}
u^z (r,z,t) =- \frac{1}{\pi r} \iint_{\Pi} \sqrt{r \overline{r}} \left[ \frac{1}{4r} F \Bigl( \frac{S^2}{r\overline{r}} \Bigr) + \left( \frac{r-\overline{r}}{r\overline{r}} - \frac{S^2}{2r^2 \overline{r}} \right) F'\Bigl( \frac{S^2}{r\overline{r}} \Bigr) \right] \overline{\omega} \, d\overline{r} d\overline{z}.
\end{equation}
As noted earlier, the abbreviations $\overline{\omega}$ and $S$ stand for $\omega(\overline{r},\overline{z},t)$ and $S(r,z;\overline{r},\overline{z})$, respectively. 

The function $F$ and its derivatives have several properties that are useful in studying $u^r$ and $u^z$. In particular, the functions $F$, $-F'$, $F''$ are positive and strictly decreasing for $s>0$, as well as they have the following asymptotic behavior at the endpoints:
\[
\begin{tabular}{@{}c@{\hspace{1.0cm}}c@{}}
\makebox[0.42\textwidth][c]{as $s\to 0^+$}
&
\makebox[0.42\textwidth][c]{as $s\to+\infty$}
\\[-6pt]
\rule{0.42\textwidth}{0.4pt}
&
\rule{0.42\textwidth}{0.4pt}
\\[6pt]
$\displaystyle F(s) = \frac{1}{2} \log \frac{1}{s} + \log 8 -2 + O(s\log s)$
&
$\displaystyle F(s) = \frac{\pi}{2 s^{3/2}} - \frac{3\pi}{2s^{5/2}}  + O(s^{-7/2})$
\\[9pt]
$\displaystyle -F'(s) = \frac{1}{2 s} + \frac{3}{32}\log s + O(1)$
&
$\displaystyle -F'(s) = \frac{3\pi}{4 s^{5/2}} - \frac{15\pi}{4s^{7/2}} + O (s^{-9/2})$
\\[9pt]
$\displaystyle F''(s) = \frac{1}{2 s^2} - \frac{3}{32s} + O(\log s)$
&
$\displaystyle F''(s) = \frac{15\pi}{8 s^{7/2}}  - \frac{105\pi}{8s^{9/2}} + O (s^{-11/2})$
\\[9pt]
\end{tabular}
\]
These asymptotic estimates and other relevant properties can be found in \cite{FeSv}; see also in \cite{Svnotes, TamDo2019}. As a consequence of the monotonicity and asymptotic behavior, we have the following estimates for the derivatives, 
\begin{equation} \label{eq:F_est}
\left| F^{(k)}(s) \right| \lesssim_k \min \left\{ s^{-k}, s^{-(k+3/2)} \right\}, \qquad k\ge 1,
\end{equation}
which is taken from \cite[Corollary~2.9]{FeSv}. Here, the bound $s^{-k}$ is better suited to the near-field regime $s\lesssim 1$, whereas $s^{-(k+3/2)}$ gives the stronger decay in the far-field regime $s\gtrsim 1$. 

For our purposes, we will also need certain combinations of these derivatives. In particular, we consider 
\begin{equation} \label{eq:G_def}
G(s):= - 5F'(s) - 2sF''(s), \qquad s>0
\end{equation}
and use its monotonicity and far-field estimate in Lemma~\ref{lem:axis_strain}, where we derive H\"{o}lder estimates for $u^r/r$ near the axis. A similar combination already appears in \cite{ChJ}, where the authors study 
\[
F^{*}(s):= F(s) - 2sF'(s), \qquad s>0
\]
to establish the monotonicity of the vertical moment, through representations in terms of the complete elliptic integrals of the first and second kinds. Here, we take a more elementary approach and derive the relevant properties directly from the integral representations of $F$ and its derivatives. Namely, we show that the functions $(-1)^kF^{(k)}(s)>0$ satisfy
\begin{equation} \label{eq:F_ineq}
k(-1)^k F^{(k)} (s) < s(-1)^{k+1} F^{(k+1)}  (s)  < \Bigl(k + \frac{3}{2}\Bigr)(-1)^k F^{(k)} (s)
\end{equation}
for every $k\ge 0$ and all $s>0$. To the best of our knowledge, this inequality in its current form has not appeared explicitly in the literature, although related properties have been studied extensively. For the sake of completeness, we provide proofs of these properties in Appendix~\ref{app:F_add}, including their application to the positivity and monotonicity of $G$.

\subsection*{Vorticity moments}
The vorticity moments can be defined on either $\Pi$ or $\Pi^{+}$. We define them on $\Pi^{+}$ if the vorticity satisfies odd symmetry and sign condition. 

On $\Pi$, we only consider the radial moments.  Namely, the $k$-th radial moment, with an integer $k\ge 1$, is defined by
\[
P_k(t) := \iint_{\Pi} r^k |\omega (r,z,t)| \, drdz.
\]
An important one is the second radial moment, which we simply refer to as the radial moment,  
\[
P_2(t) := \iint_{\Pi} r^2 |\omega (r,z,t)| \, drdz.
\]

Assuming the odd-in-$z$ symmetry and nonnegativity on $z>0$, as in \eqref{eq:odd-in-z}, we can restrict the quantities introduced so far to $\Pi^{+}$ and define several vorticity moments. In particular, up to the harmless factor of $2$ coming from the odd symmetry, the above radial moment can be written as
\[
P_2(t) = \iint_{\Pi^{+}} r^2 \omega(r,z,t) \, drdz.
\]
We further define additional moments: the vertical moment by
\begin{equation*} 
Z(t) :=\iint_{\Pi^{+}} z \omega (r,z,t) \, drdz,
\end{equation*}
and the $\varphi$-moment by
\begin{equation*} 
M_{\varphi}(t):= \iint_{\Pi^{+}} \varphi(r,z) \, \omega(r,z,t)\, drdz, \qquad \varphi (r,z):= \int_0^z\sqrt{r^2+s^2}\, ds.
\end{equation*}
Assuming that the initial vorticity is nontrivial, all three moments remain strictly positive for all time. The key property is their monotonicity: $P_2$ is increasing, whereas $Z$ and $M_{\varphi}$ are decreasing. The monotonicity properties of $P_2$ and $Z$ were first observed in \cite{ChJ},  whereas $M_{\varphi}$ was introduced and studied more recently in \cite{CFQ}. 

In our work, particularly in proving the subexponential upper bound under vertical confinement, those decreasing moments $Z$ and $M_{\varphi}$ will play a central role. Using the transport equation \eqref{eq:axi_euler}, together with the decay of $u^r$ and $u^z$ at infinity, integration by parts yields
\begin{equation} \label{eq:Z_der}
-Z'(t) = \frac{1}{2} \int_0^{\infty} u^z (0,z,t)^2 \, dz + \iint_{\Pi^{+}} \frac{u^r(r,z,t)^2}{r} \, drdz,
\end{equation}
and 
\begin{equation} \label{eq:M_varphi_der}
-\!M_{\varphi}'(t) = \frac{1}{2} \int_0^{\infty} z u^z(0,z,t)^2 \, dz + \iint_{\Pi^{+}} \frac{(zu^r-ru^z)^2}{r\rho} \, drdz.
\end{equation}
In addition to justifying the monotonicity of both moments, these identities also provide quantitative control of the corresponding velocity terms through the decay of the moments. For rigorous derivations of these identities, we refer to \cite{EY} for \eqref{eq:Z_der} and to \cite{CFQ} for \eqref{eq:M_varphi_der}.

Another useful observation is that these moments admit line integral representations. More generally, for the moment 
\[ 
M_{f} (t):= \iint_{\Pi^{+}} f(r,z) \omega(r,z,t) \, drdz, 
\] 
substituting $\omega = -\partial_z u^r + \partial_r u^z$ and integrating by parts gives
\begin{equation*}
M_{f} (t) = \int_0^{\infty} f(r,0) u^r(r,0,t) \, dr - \int_0^{\infty} f(0,z) u^z(0,z,t) \, dz + \iint_{\Pi^{+}} \psi Lf \, drdz,
\end{equation*}
provided $f$ has sufficient regularity and suitable growth, with boundary terms at infinity justified by the decay of the velocity.
Then, whenever $f$ is $L$-harmonic, i.e., $Lf=0$, we obtain the line integral representation
\begin{equation*}
M_{f} (t) = \int_0^{\infty} f(r,0) u^r(r,0,t) \, dr + \int_0^{\infty} f(0,z) \left(-u^z(0,z,t)\right) \, dz.
\end{equation*}
In particular, the weights $z$ and $\varphi(r,z)$ are $L$-harmonic, and therefore we have
\begin{equation} \label{eq:Z_repn}
Z(t) =\iint_{\Pi^{+}} z \omega (r,z,t) \, drdz = \int_0^{\infty} z (-u^z(0,z,t))\, dz,
\end{equation}
and
\begin{equation} \label{eq:M_varphi_repn}
M_{\varphi}(t)= \iint_{\Pi^{+}} \left(\int_0^z\sqrt{r^2+s^2}\, ds\right) \omega(r,z,t)\, drdz = \frac{1}{2} \int_{0}^{\infty} z^2 \left(-u^z(0,z,t)\right) \, dz.
\end{equation}

It is worth noting that, under the assumption \eqref{eq:odd-in-z}, we have
\[ -u^z(0,z,t)>0 \quad \text{for all } z>0 \text{ and } t\ge 0. \] 
Indeed, the Biot--Savart expression \eqref{eq:u^z} for $u^z$ gives  
\[
- u^{z}(0,z,t) = \frac{1}{2} \iint_{\Pi^+} \left[ \frac{\overline{r}^2}{(\overline{r}^2+(z-\overline{z})^2)^{3/2}} - \frac{\overline{r}^2}{(\overline{r}^2+(z+\overline{z})^2)^{3/2}} \right] \overline{\omega} \, d\overline{r}d\overline{z},
\]
and the integrand becomes positive when $z>0$.

Lastly, since $\max \{rz, z^2/2 \} \le \varphi (r,z)$, it follows that $M_{\varphi}$ dominates the vertical $z^2$-moment and the mixed $rz$-moment. In particular, for the vertical moment, we have 
\[
Z(t) = \iint_{\Pi^{+}} z\omega \, drdz \le \sqrt{2m_0 M_{\varphi}(t)},
\]
where $m_0:=\iint_{\Pi^{+}} \omega \, drdz = A_0/2$ is the total vorticity mass on $\Pi^{+}$. Hence $M_{\varphi}(0)<\infty$ gives stronger vertical confinement as it also implies $Z(0)<\infty$.

\section{Growth bounds for $P_2$, $\|u^r\|_{L^{\infty}}$, and $\|\omega\|_{L^{p}}$} \label{sec:upper_bound}

In this section, we obtain upper bounds on the growth of the radial moment $P_2(t)$ (as well as $P_k(t)$), and consequently on the radial velocity $\|u^r\|_{L^{\infty}}$ and the vorticity $\|\omega\|_{L^p}$. Throughout the section, $3<p\le \infty$ is a fixed exponent, and our main assumptions are \eqref{eq:init_vorticity} and $P_2(0)<\infty$; when higher radial moments are also considered, we additionally assume $P_k(0)<\infty$. 

\subsection{$L^p$ version of Feng--\v{S}ver\'{a}k type estimate}  \label{subsec:FeSv}
We derive an $L^p$ extension of the Feng-\v{S}ver\'{a}k type estimate for $u^r$ given in \cite[Proposition A.1]{LJ}, replacing $\|\omega/r\|_{L^{\infty}}$ by $\|\omega/r\|_{L^p}$ with the corresponding exponents. The proof follows the same general strategy and relies only on the axisymmetric Biot--Savart law; we therefore omit the time dependence and formulate the estimate as a separate result.
 
\begin{lemma} \label{lem:FeSv} 
Let $3<p\le \infty$. Assume that \[ u\in L^2(\mathbb{R}^3), \quad \frac{\omega}{r} \in L^{p} (\mathbb{R}^3), \quad \text{and} \quad r\omega \in  L^1(\mathbb{R}^3), \] where $u$ is recovered by $\omega$ via axisymmetric Biot--Savart law \eqref{eq:u^r}--\eqref{eq:u^z}.
Then we have
\begin{equation} \label{eq:u_r_est}
  \|u^r\|_{L^{\infty}(\mathbb{R}^3)} \le C \Bigl( \frac{p}{p-3} \Bigr)^{\frac{1}{2}}  \|u\|^{\frac{p-3}{3(p-1)}}_{L^2(\mathbb{R}^3)} \left\| \frac{\omega}{r} \right\|^{\frac{p}{2(p-1)}}_{L^{p}(\mathbb{R}^3)} \left\| r \omega \right\|_{L^1(\mathbb{R}^3)}^{\frac{p}{6(p-1)}},
\end{equation}
where $C>0$ is an absolute constant.
\end{lemma}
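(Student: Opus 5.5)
The plan is to fix a point $(r,z)\in\Pi$ and split the Biot--Savart integral \eqref{eq:u^r} into a near-field region $\{S<\lambda r\}$ (or more precisely $\{S\le \ell\}$ for a length $\ell$ to be optimized) and a far-field region. This mirrors the proof of \cite[Proposition A.1]{LJ}. Using \eqref{eq:F_est} with $k=1$, the kernel satisfies
\[
\Bigl|\tfrac{z-\bar z}{r\sqrt{r\bar r}}F'\bigl(\tfrac{S^2}{r\bar r}\bigr)\Bigr|\lesssim \min\Bigl\{\tfrac{\sqrt{\bar r}}{\sqrt r\,S},\ \tfrac{r^{1/2}\bar r^{2}}{S^4}\Bigr\}.
\]
The near part is the first bound and the far part is the second (the far bound comes from $|z-\bar z|(r\bar r)^{3/2}S^{-5}/r\cdot(r\bar r)^{-1/2}$).

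\textbf{Step 1: three pieces.} I will decompose into three contributions and estimate each by the conserved quantity best adapted to it.

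\emph{Near the point} ($S\le \ell$, with $\bar r\sim r$ when $\ell\le r/2$): write $\bar\omega=\bar r\cdot(\bar\omega/\bar r)$ and apply Hölder in $\Pi$ with measure $\bar r\,d\bar r d\bar z$. The $L^p(\mathbb{R}^3)$ norm of $\omega/r$ corresponds to weight $\bar r$. This gives roughly
\[
r\int_{S\le\ell}S^{-1}|\bar\omega/\bar r|\,d\bar r d\bar z\lesssim r\,A_p\, r^{-1/p}\Bigl(\int_{S\le \ell}S^{-p'}\Bigr)^{1/p'}\sim A_p\, r^{1-1/p}\,\ell^{\frac{2}{p'}-1}.
\]
Here $p'<3/2<2$, so the integral converges, and $2/p'-1=1-2/p$.

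\emph{Far part:} bound it by the moment $\|r\omega\|_{L^1}=\iint\bar r^2|\bar\omega|$, giving $\lesssim r^{1/2}\ell^{-4}\cdot$(some power of $r$) times $\|r\omega\|_{L^1}$. Balancing the two pieces alone would give a bound depending on $r$.

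\emph{Energy:} to remove the $r$-dependence, I bring in $\|u\|_{L^2}$ as in \cite{LJ}. I would write $u^r=\partial_z\psi/r$ and, for an intermediate annulus, integrate by parts to move the derivative onto a cutoff. Equivalently, I would average $u^r$ over a ball of radius $\ell$ around $(r,z)$, controlling the average by $\|u\|_{L^2}(r\ell^2)^{-1/2}$, and control the oscillation by $\ell\,\|\nabla u^r\|$-type near-field estimates coming from the $\omega/r$ integrability.

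\textbf{Step 2: optimize.} Choose $\ell$ and split the cases $r$ small versus $r$ large. For small $r$, the near-field bound in terms of $A_p$ and $\ell$ already suffices, and I optimize against the energy term. For large $r$, I use the moment term. Equating the three terms should produce exponents $\frac{p-3}{3(p-1)},\frac{p}{2(p-1)},\frac{p}{6(p-1)}$. As a sanity check, a scaling argument pins these down: they must sum correctly under $\omega\mapsto\lambda\omega(\mu\cdot)$, and at $p=\infty$ they reduce to $1/3,1/2,1/6$.

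\textbf{Step 3: the constant, and the main obstacle.} The constant $(p/(p-3))^{1/2}$ should arise from the near-field Hölder step. There, integrability of $S^{-p'}\bar r^{\,\cdot}$-type singularities against the 3D measure near the axis requires $p'<3/2$, i.e. $p>3$, with the integral $\sim \frac{1}{3-2p'}\sim\frac{p}{p-3}$ raised to $1/p'$ and then to the optimization power. I expect the main obstacle to be the region near the axis ($\bar r\ll r$ or $r$ itself small): the kernel's $\sqrt{\bar r/r}$ behaviour must be combined with the 3D weight carefully. The way to handle this is to do the Hölder step in $\mathbb{R}^3$ variables, where the near-axis kernel is like $|x-y|^{-2}$. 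Its $L^{p'}$ integrability on a ball is exactly the condition $p>3$, and this yields the blowing-up factor cleanly.
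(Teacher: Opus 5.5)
\textbf{There is a genuine gap.} Your scheme controls each region with a single quantity: the near piece by $A_p$, and the far piece by $\|r\omega\|_{L^1}$ or by the energy. With this assignment the case split in $r$ does not close.

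\emph{Large $r$.} At a point at distance $r$ from the axis, your near bound $A_p r^{1-1/p}\ell^{1-2/p}$, optimized against the energy term $\|u\|_{L^2} r^{-1/2}\ell^{-1}$, gives $\|u\|_{L^2}^{\frac{p-2}{2(p-1)}}A_p^{\frac{p}{2(p-1)}}r^{\frac{p}{4(p-1)}}$. Optimized instead against your moment term on $\{S\ge \ell\}$, it gives $A_p^{\frac{4p}{5p-2}}\|r\omega\|_{L^1}^{\frac{p-2}{5p-2}}r^{\frac{5p-6}{5p-2}}$. Here the moment term is $r\ell^{-4}\|r\omega\|_{L^1}$: the far kernel is $r\bar r^{2}S^{-4}$, because $|F'(s)|\lesssim s^{-5/2}$ produces $(r\bar r)^{5/2}$, not $(r\bar r)^{3/2}$. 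Both bounds grow as $r\to\infty$, so ``for large $r$ use the moment'' leaves you nothing that decays.

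\emph{Small $r$.} Here the trouble is the far region $S\gtrsim r$. The moment alone gives $r^{-3}\|r\omega\|_{L^1}$ and the energy alone gives $r^{-3/2}\|u\|_{L^2}$; both blow up. $A_p$ alone also fails: after H\"older the dual weight is $\sim S^{-(p+1)/(p-1)}$, which is not integrable at infinity in the half-plane when $p>3$. So ``the near-field bound in terms of $A_p$ suffices'' is false there.

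\emph{Two further points.} The energy does not ``remove the $r$-dependence''. That is done by scaling, since the right-hand side is invariant under $u\mapsto \mu u(\lambda\,\cdot)$, and you use this only as a sanity check. Also, $\|\nabla u^r\|_{L^\infty}$ is not controlled by $L^p$ norms of the vorticity. Your oscillation step would need a H\"older modulus, or an integration by parts against a cut-off kernel.

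\textbf{The missing idea} is to interpolate $\|\omega/r\|_{L^p}$ with $\|r\omega\|_{L^1}$ on the same region. In particular, the moment must enter the estimate of the vorticity near the point, not only the far field. The paper first reduces to the point $(1,0)$ by scaling and translation. It then factors, in $B_\gamma(1,0)$,
\[
\frac{\bar r^{1/2}}{S}|\bar\omega| = \Bigl(\frac{|\bar\omega|^p}{\bar r^{p-1}}\Bigr)^{\frac{3}{4p}}\bigl(\bar r^{2}|\bar\omega|\bigr)^{\frac14}\Bigl(\bar r\,S^{-\frac{4p}{3p-3}}\Bigr)^{\frac{3p-3}{4p}},
\]
and does the analogous factorization of $\bar r^{2}S^{-4}|\bar\omega|$ in the far field. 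H\"older then bounds:
\begin{itemize}
\item the near piece by $(p/(p-3))^{1/2}\gamma^{\frac{p-3}{2p}}\|\omega/r^{1-1/p}\|_{L^p(\Pi)}^{3/4}\|r^2\omega\|_{L^1(\Pi)}^{1/4}$;
\item the far piece by $\|\omega/r^{1-1/p}\|_{L^p(\Pi)}^{3/4}\|r^2\omega\|_{L^1(\Pi)}^{1/4}$.
\end{itemize}
Integration by parts bounds the annulus by $\gamma^{-1}\|r^{1/2}u\|_{L^2(\Pi)}$ and the far piece by $\|r^{1/2}u\|_{L^2(\Pi)}$. Taking the geometric mean of the two far-field bounds with weights $\frac{2p}{3(p-1)}$ and $\frac{p-3}{3(p-1)}$, and optimizing in $\gamma$, gives exactly the stated exponents.

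This also corrects your Step 3. The threshold $p>3$ and the factor $(p/(p-3))^{1/2}$ come from $\int_{B_\gamma(1,0)}S^{-4p/(3p-3)}\,d\bar r\,d\bar z$, which is finite if and only if $p>3$. So they come from the interpolated kernel power, not from three-dimensional integrability near the axis; after scaling, the point sits at distance $1$ from the axis. Your own Step 1 integral $\int S^{-p'}$ converges for every $p>2$ with a bounded constant, so Steps 1 and 3 are inconsistent with each other.
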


Throughout the proof, we use the notation $\sim$ and $\lesssim $ to denote equivalence or inequalities that hold up to an absolute constant. We may take some parts of the argument for granted and refer the reader to Proposition A.1 of \cite{LJ} for the details. 

 \begin{proof}[Proof of Lemma~\ref{lem:FeSv}]
 By scaling and translation invariance, it suffices to obtain the desired estimate for $u^r(1,0)$. After rewriting everything in the half-plane $\Pi$, our goal becomes to show that
\begin{equation} \label{eq:pr1}
    |u^r(1,0)| \lesssim \Bigl( \frac{p}{p-3} \Bigr)^{\frac{1}{2}}  \|r^{1/2} u\|^{\frac{p-3}{3(p-1)}}_{L^2(\Pi)} \left\| \frac{\omega}{r^{1-1/p}} \right\|^{\frac{p}{2(p-1)}}_{L^{p}(\Pi)} \left\| r^{2} \omega \right\|_{L^1(\Pi)}^{\frac{p}{6(p-1)}}.
\end{equation}

By \eqref{eq:u^r}, we have
\begin{equation*}
u^r(1,0) = \frac{1}{\pi} \iint_{\Pi} \frac{\overline{z}}{\overline{r}^{1/2}} \left[ - F'\left(\frac{S(1,0;\overline{r}, \overline{z})^2}{\overline{r}}\right) \right] \, \overline{\omega} \, d\overline{r} d\overline{z},
\end{equation*}
and in integrals, we simply write $S$ for $S(1,0;\overline{r}, \overline{z})$ as it denotes the Euclidean distance from $(\overline{r},\overline{z})$ to $(1,0)$. Also, recall that $\overline{\omega}$ stands for $\omega (\overline{r},\overline{z})$.

Let $\eta:[0,\infty) \rightarrow [0,\infty)$ be a standard cut-off: smooth, non-increasing, and compactly supported function, taking values in $[0,1]$ with $\eta(s)= 1$ for $0 \le s \le 1/2$ and $\eta(s)= 0$ for $s\ge 1$. To have a cut-off function supported on $[0,\lambda]$, we define $\eta_\lambda (s): = \eta (\frac{s}{\lambda})$. Using $\eta_{1/2}$, we decompose the integral in $u^r(1,0)$ as
\begin{align*}
 \pi u^r(1,0) = I_1+I_2 := \iint_{\Pi} & \frac{\overline{z}}{\overline{r}^{1/2}} \left[ - F'\Bigl(\frac{S^2}{\overline{r}}\Bigr) \right] \, \eta_{1/2} (S) \, \overline{\omega} \, d\overline{r} d\overline{z} \\
 &+ \iint_{\Pi} \frac{\overline{z}}{\overline{r}^{1/2}}  \left[ - F'\Bigl(\frac{S^2}{\overline{r}}\Bigr) \right] \, \left(1-\eta_{1/2} (S)\right) \, \overline{\omega} \, d\overline{r} d\overline{z}.
 \end{align*}
We refer to $I_1$ as the \textit{near-field} contribution since the cut-off localizes the integral to $B_{1/2}(1,0)$, and correspondingly, $I_2$ is referred as the \textit{far-field} contribution. It suffices to show that both $I_1$ and $I_2$ are  bounded by the right-hand side of \eqref{eq:pr1}. We first handle $I_2$, and $I_1$ will be addressed later. 

\medskip 

\noindent \textbf{Far-field contribution: estimate for $I_2$.} The integrand of $I_2$ is supported on $\Pi\setminus B_{1/4} (1,0)$, where we have $\overline{r} \lesssim S(\overline{r},\overline{z})$. By the far-field estimate for $F'$, given in \eqref{eq:F_est}, we get
\begin{equation*}
\left| I_2 \right| \lesssim \iint_{\Pi\setminus B_{1/4}(1,0)} \frac{|\overline{z}|}{\overline{r}^{1/2}} \, \frac{\overline{r}^{\frac{5}{2}}}{S^{5}} \, |\overline{\omega}| \, d\overline{r} d\overline{z} \le \iint_{\Pi\setminus B_{1/4}(1,0)}  \frac{\overline{r}^{2}}{S^{4}} \, |\overline{\omega}| \, d\overline{r} d\overline{z}.
\end{equation*}
We may rewrite the integrand above as
\begin{equation*} \label{eq:rep1}
\frac{\overline{r}^{2}}{S^{4}} \, |\overline{\omega}| = \left(\frac{|\overline{\omega}|^p}{\overline{r}^{p-1}}\right)^{\frac{3}{4p}}  \left( \overline{r}^{2} |\overline{\omega}| \right)^{\frac{1}{4}} \left(\frac{\overline{r}^{\frac{9p-3}{3p-3}}}{S^{\frac{16p}{3p-3}}} \right)^{\frac{3p-3}{4p}}.
\end{equation*}
Then, noting that $\overline{r} \lesssim S$ on $\Pi\setminus B_{1/4} (1,0)$ and applying H\"{o}lder's inequality gives  
\begin{align*}
\left| I_2 \right| \lesssim \left\| \frac{\omega}{r^{1-1/p}} \right\|_{L^p(\Pi)}^{\frac{3}{4}} \left\| r^2 \omega \right\|_{L^1(\Pi)}^{\frac{1}{4}} \left( \iint_{\Pi\setminus B_{1/4}(1,0)} \frac{1}{S^{\frac{7p+3}{3p-3}}} \, d\overline{r} d\overline{z}  \right)^{\frac{3p-3}{4p}}.
\end{align*}
Since $\frac{7p+3}{3p-3}>2$, the last integral is finite. In fact, it remains uniformly bounded for $p \in (3,\infty]$ as
\[
\iint_{\Pi\setminus B_{1/4}(1,0)} \frac{1}{S^{\frac{7p+3}{3p-3}}} \, d\overline{r} d\overline{z} \lesssim \int_{1/4}^{\infty} \frac{1}{\rho^{\frac{4p+6}{3p-3}}} \, d\rho = \Bigl(\frac{3p-3}{p+9} \Bigr) \, 4^{\frac{p+9}{3p-3}} \lesssim 1.
\]
Here, at the end, we also use $\frac{1}{3} \le \frac{p+9}{3p-3} < 2$ and $\frac{1}{2} < \frac{3p-3}{4p}\le \frac{3}{4}$. Thus, we obtain an estimate, rewriting in a convenient form, 
\begin{equation} \label{eq:far2}
\left| I_2 \right|^{\frac{2p}{3(p-1)}} \lesssim \left\| \frac{\omega}{r^{1-1/p}} \right\|_{L^p(\Pi)}^{\frac{p}{2(p-1)}} \left\| r^2 \omega \right\|_{L^1(\Pi)}^{\frac{p}{6(p-1)}}.
\end{equation}

On the other hand, one can estimate $I_2$ in a different way. Substituting $\omega= -\partial_z u_r +\partial_r u_z$ into $I_2$ and integrating by parts, we have
\begin{align*}
I_2 = - \iint_{\Pi} \overline{r}^{-\frac{1}{2}} \, J_1(\overline{r},\overline{z}) \, \overline{u}_r \, d\overline{r} d\overline{z} + \iint_{\Pi} \overline{z} \, J_2(\overline{r},\overline{z}) \, \overline{u}_z \, d\overline{r} d\overline{z},
\end{align*}
where
\[
J_1(\overline{r},\overline{z}):= \partial_{\overline{z}} \left[\overline{z}  F'\Bigl(\frac{S^2}{\overline{r}}\Bigr) \left(1-\eta_{1/2} (S)\right) \right], \quad \quad J_2(\overline{r},\overline{z}):= \partial_{\overline{r}} \left[\overline{r}^{-\frac{1}{2}}  F'\Bigl(\frac{S^2}{\overline{r}}\Bigr) \left(1-\eta_{1/2} (S)\right) \right]. 
\]
Next, we aim to show that each of these integrals is bounded by $\|r^{1/2}u\|_{L^2(\Pi)}$, up to a constant. To obtain these bounds, we rely primarily on the far-field estimates from \eqref{eq:F_est}, and do not use the condition $\omega/r \in L^p(\mathbb{R}^3)$. So we can take this part from \cite[Proposition A.1]{LJ} and write 
\begin{equation} \label{eq:far3}
\left| I_2 \right|^{\frac{p-3}{3(p-1)}} \lesssim \|r^{1/2} u\|_{L^2(\Pi)}^{\frac{p-3}{3(p-1)}}.
\end{equation}

Combining \eqref{eq:far2} and \eqref{eq:far3}, we obtain
\begin{equation*}
\left| I_2 \right| \lesssim \|r^{1/2} u\|_{L^2(\Pi)}^{\frac{p-3}{3(p-1)}} \left\| \frac{\omega}{r^{1-1/p}} \right\|_{L^p(\Pi)}^{\frac{p}{2(p-1)}} \left\| r^2 \omega \right\|_{L^1(\Pi)}^{\frac{p}{6(p-1)}}.
\end{equation*}

\medskip 

\noindent \textbf{Near-field contribution: estimate for $I_1$.} For $\gamma>0$, to be chosen later, we further decompose $I_1$ as
\begin{align*}
I_1 = I_{11}+I_{12} := \iint_{\Pi} &\frac{\overline{z}}{\overline{r}^{1/2}} \left[ - F'\Bigl(\frac{S^2}{\overline{r}}\Bigr) \right] \,  \eta_{\gamma} (S) \, \eta_{1/2} (S) \, \overline{\omega} \, d\overline{r} d\overline{z} \\
&+ \iint_{\Pi} \frac{\overline{z}}{\overline{r}^{1/2}} \left[ - F'\Bigl(\frac{S^2}{\overline{r}}\Bigr) \right] \, (1-\eta_{\gamma} (S)) \, \eta_{1/2} (S) \, \overline{\omega} \, d\overline{r} d\overline{z},
\end{align*}
and our goal is to derive bounds for $I_{11}$ and $I_{12}$, as in the right-hand side of \eqref{eq:pr1}.

For $I_{11}$, we apply the near-field estimate for $F'$, given in \eqref{eq:F_est}, to find that 
\[
\left| I_{11} \right| \lesssim \iint_{B_{\gamma}(1,0)} \frac{\overline{z}}{\overline{r}^{1/2}} \, \frac{\overline{r}}{S^2} \, |\overline{\omega}| \, d\overline{r} d\overline{z} \le \iint_{B_{\gamma}(1,0)} \frac{\overline{r}^{1/2}}{S} \, |\overline{\omega}| \, d\overline{r} d\overline{z}. 
\]
Again, rewriting the integrand above as
\begin{equation*} \label{eq:rep2}
\frac{\overline{r}^{1/2}}{S} \, |\overline{\omega}| = \left(\frac{|\overline{\omega}|^p}{\overline{r}^{p-1}}\right)^{\frac{3}{4p}}  \left( \overline{r}^{2} |\overline{\omega}| \right)^{\frac{1}{4}} \left(\frac{\overline{r}}{S^{\frac{4p}{3p-3}}} \right)^{\frac{3p-3}{4p}},
\end{equation*}
by H\"{o}lder's inequality and the fact that $\overline{r} \sim 1$ on the localised region, we obtain 
\begin{align*} \label{eq:near1}
\begin{split}
\left| I_{11} \right| &\lesssim \left( \iint_{B_{\gamma}(1,0)} \frac{1}{S^{\frac{4p}{3p-3}}} \, d\overline{r} d\overline{z} \right)^{\frac{3p-3}{4p}} \left\| \frac{\omega}{r^{1-1/p}} \right\|_{L^p(\Pi)}^{\frac{3}{4}} \|r^{2}\omega\|_{L^1(\Pi)}^{\frac{1}{4}} \\
&\lesssim \left( \frac{3(p-1)}{2(p-3)} \right)^{\frac{3p-3}{4p}} \! \gamma^{\frac{p-3}{2p}} \left\| \frac{\omega}{r^{1-1/p}} \right\|_{L^p(\Pi)}^{\frac{3}{4}} \|r^{2}\omega\|_{L^1(\Pi)}^{\frac{1}{4}}.
\end{split}
\end{align*}
Since $p\in (3,\infty]$, we have $\frac{1}{2} < \frac{3p-3}{4p} \le \frac{3}{4}$, $\frac{2}{3} < \frac{p-1}{p} \le 1$, which implies
\[
\left( \frac{3(p-1)}{2(p-3)} \right)^{\frac{3p-3}{4p}}  \! \sim \Bigl( \frac{p}{p-3} \Bigr)^{\frac{3p-3}{4p}}.
\]
Noting that the function $f(s) = s^{1/s}$ satisfies $1\le f(s) \le e^{1/e}$ for $s\in [1,\infty]$, we further get 
\begin{equation} \label{eq:near_est}
\Bigl( \frac{p}{p-3} \Bigr)^{\frac{3p-3}{4p}} = \Bigl( \frac{p}{p-3} \Bigr)^{\frac{1}{2}} \Bigl( \frac{p}{p-3} \Bigr)^{\frac{p-3}{4p}} \sim \Bigl( \frac{p}{p-3} \Bigr)^{\frac{1}{2}}.
\end{equation}
As a result, this yields
\begin{equation} \label{eq:near2}
\left| I_{11} \right| \lesssim \Bigl( \frac{p}{p-3} \Bigr)^{\frac{1}{2}} \gamma^{\frac{p-3}{2p}} \left\| \frac{\omega}{r^{1-1/p}} \right\|_{L^p(\Pi)}^{\frac{3}{4}} \|r^{2}\omega\|_{L^1(\Pi)}^{\frac{1}{4}}.
\end{equation}

Note that $I_{12}$ vanishes whenever $\gamma \ge 1$. Therefore, we only consider $\gamma <1$. The integrand of $I_{12}$ is supported on an annular region, and using integration by parts we have
\begin{align*}
I_{12} = -\iint_{\Pi} \overline{r}^{-\frac{1}{2}} \, K_1(\overline{r},\overline{z}) \, \overline{u}_r \, d\overline{r}d\overline{z} + \iint_{\Pi} \overline{z} \, K_2(\overline{r},\overline{z}) \, \overline{u}_z \, d\overline{r}d\overline{z},
\end{align*}
where 
\begin{align*}
K_1(\overline{r},\overline{z}) &:= \partial_{\overline{z}} \left[ \overline{z}  F'\Bigl(\frac{S^2}{\overline{r}}\Bigr) (1-\eta_{\gamma} (S)) \, \eta_{1/2} (S) \right], \\
K_2(\overline{r},\overline{z}) &:= \partial_{\overline{r}} \left[ \overline{r}^{-\frac{1}{2}} F'\Bigl(\frac{S^2}{\overline{r}}\Bigr) (1-\eta_{\gamma} (S)) \, \eta_{1/2} (S) \right].
\end{align*}
Similar to the previous case, we apply the near-field estimates from \eqref{eq:F_est} to derive that each integral is bounded by $\frac{1}{\gamma} \|r^{1/2}u\|_{L^2(\Pi)}$. Moreover, we do not rely on $\omega/r \in L^{p}(\mathbb{R}^3)$; therefore, the above derivation can be carried out as in \cite[Proposition A.1]{LJ}. Thus, we have
\begin{equation} \label{eq:near3}
\left| I_{12} \right| \lesssim \frac{1}{\gamma} \, \|r^{1/2}u\|_{L^2(\Pi)}.
\end{equation}

Combining both bounds, \eqref{eq:near2} and \eqref{eq:near3}, we obtain
\begin{equation*} \label{eq:near4}
\left| I_1 \right| \lesssim  \Bigl( \frac{p}{p-3} \Bigr)^{\frac{1}{2}} \gamma^{\frac{p-3}{2p}} \left\| \frac{\omega}{r^{1-1/p}} \right\|_{L^p(\Pi)}^{\frac{3}{4}} \|r^{2}\omega\|_{L^1(\Pi)}^{\frac{1}{4}} + \frac{1}{\gamma} \, \|r^{1/2}u\|_{L^2(\Pi)},
\end{equation*}
and it remains to optimize the right-hand side with respect to $\gamma$. In particular, one can apply Young's inequality (or argue directly) to see that the choice
\[
\gamma := \Bigl( \frac{4p}{p-3} \Bigr)^{\frac{p}{3(p-1)}} \|r^{1/2}u \|_{L^2(\Pi)}^{\frac{2p}{3(p-1)}} \left\| \frac{\omega}{r^{1-1/p}} \right\|_{L^p(\Pi)}^{-\frac{p}{2(p-1)}} \left\| r^2\omega \right\|_{L^1(\Pi)}^{-\frac{p}{6(p-1)}}
\]
yields, regardless of whether $\gamma \ge 1$ or $\gamma<1$, 
\[
\left| I_1 \right| \lesssim  \Bigl( \frac{p}{p-3} \Bigr)^{\frac{2p-3}{3(p-1)}} \|r^{1/2}u \|_{L^2(\Pi)}^{\frac{p-3}{3(p-1)}} \left\| \frac{\omega}{r^{1-1/p}} \right\|_{L^p(\Pi)}^{\frac{p}{2(p-1)}} \left\| r^2\omega \right\|_{L^1(\Pi)}^{\frac{p}{6(p-1)}}.
\]
Arguing as in \eqref{eq:near_est}, we note that
\[
\Bigl( \frac{p}{p-3} \Bigr)^{\frac{2p-3}{3(p-1)}} \! = \Bigl( \frac{p}{p-3} \Bigr)^{\frac{1}{2}} \Bigl( \frac{p}{p-1} \Bigr)^{\frac{p-3}{6(p-1)}} \Bigl( \frac{p-1}{p-3} \Bigr)^{\frac{p-3}{6(p-1)}} \sim  \Bigl( \frac{p}{p-3} \Bigr)^{\frac{1}{2}},
\]
which gives
\begin{equation*}
\left| I_1 \right| \lesssim  \Bigl( \frac{p}{p-3} \Bigr)^{\frac{1}{2}} \|r^{1/2}u \|_{L^2(\Pi)}^{\frac{p-3}{3(p-1)}} \left\| \frac{\omega}{r^{1-1/p}} \right\|_{L^p(\Pi)}^{\frac{p}{2(p-1)}} \left\| r^2\omega \right\|_{L^1(\Pi)}^{\frac{p}{6(p-1)}},
\end{equation*}
concluding the near-field estimate.
\end{proof}

\subsection{Auxiliary lemmas and the proof of Proposition~\ref{prop:upper_bound}}
We begin by recalling the following inequality from \cite[Lemma 3.1]{EY}.

\begin{lemma} \label{lem:rad_1}
For initial data satisfying \eqref{eq:init_vorticity} and $P_k(0)<\infty$ with an integer $k\ge 2$, we have 
\begin{equation} \label{eq:dif_P_k}
P'_k(t) \le k \iint_{\Pi} r^{k-1} |u^r \omega| \, drdz.
\end{equation}
\end{lemma}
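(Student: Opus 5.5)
The plan is to differentiate $P_k$ along the flow using the transport structure \eqref{eq:axi_euler} and the conservation of $\omega/r$. Since $\omega/r$ is transported by the divergence-free field (in $\mathbb{R}^3$), I will write
\[
P_k(t) = \iint_\Pi r^{k} |\omega|\,drdz = \frac{1}{2\pi}\int_{\mathbb{R}^3} r^{k-2}\,\Bigl|\frac{\omega}{r}\Bigr|\, r\cdot r\,dx ,
\]
that is, $P_k(t)=\frac1{2\pi}\int_{\mathbb{R}^3} r^{k}\,|\omega/r|\,dx$ up to the harmless constant. Then, formally, since $|\omega/r|$ satisfies $(\partial_t+u\cdot\nabla)|\omega/r|=0$ and $\nabla\cdot u=0$,
\[
\frac{d}{dt}\int_{\mathbb{R}^3} r^k\Bigl|\frac{\omega}{r}\Bigr|dx = \int_{\mathbb{R}^3} (u\cdot\nabla r^k)\Bigl|\frac{\omega}{r}\Bigr|dx = k\int_{\mathbb{R}^3} r^{k-1}u^r\Bigl|\frac{\omega}{r}\Bigr|dx ,
\]
which on $\Pi$ is $k\iint_\Pi r^{k-1}u^r|\omega|\,drdz\le k\iint_\Pi r^{k-1}|u^r\omega|\,drdz$. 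Equivalently, I can use the Lagrangian form: $P_k(t)=\frac1{2\pi}\int |\omega_0/r_0|\,(\phi_t^r)^k\,dx_0$ by volume preservation, and differentiate $(\phi^r_t)^k$ using $\partial_t\phi^r_t=u^r(\phi_t,t)$.

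The rigorous steps are as follows. First, I work with smooth compactly supported approximations, or more simply with a truncated weight $r^k\chi_R(r,z)$ with $\chi_R$ a cutoff, so that all integrals are finite. Second, I compute the derivative for the truncated moment, which produces the main term plus an error $\int r^k|\omega/r|\,u\cdot\nabla\chi_R$. Third, I integrate in time and let $R\to\infty$. I will carry this out in the integrated form $P_k(t)-P_k(s)\le k\int_s^t\iint r^{k-1}|u^r\omega|$, which is the meaning of \eqref{eq:dif_P_k} in the weak sense.

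The main obstacle is justifying the limit $R\to\infty$, because a priori $P_k(t)$ is only known to be finite at $t=0$. For this I will use the Lagrangian representation together with the bound $\|u^r\|_{L^\infty}<\infty$ on bounded time intervals. This bound follows from Lemma~\ref{lem:FeSv}, or more crudely from Danchin's estimate, which gives $\phi_t^r\le r_0e^{Ct}$. Hence $P_k(t)\le e^{kCt}P_k(0)<\infty$ for all $t$, and $P_{k-1}$ is finite by interpolation between $P_1$ (bounded via the $L^1$ norm of $\omega$ and the $L^1$ norm of $\omega/r$) and $P_k$. With this, the right-hand side of \eqref{eq:dif_P_k} is locally integrable in time, and dominated convergence removes the cutoff error, since $|\nabla\chi_R|\lesssim R^{-1}$ is supported where $r\sim R$ or $|z|\sim R$. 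The $z$-direction cutoff needs $u^z$ bounded, which holds by the same Biot--Savart estimates as for $u^r$; alternatively, I can cut off only in $r$ and use the finiteness of the moment in $z$-slices. This reproduces \cite[Lemma 3.1]{EY}.
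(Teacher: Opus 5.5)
Your proposal is correct and follows essentially the paper's route. The derivative comes from the transport of $\omega/r$, and the a priori bound $P_k(t)\le P_k(0)e^{ck\|\omega_0/r\|_{L^{3,1}}t}$ follows from Danchin's bound on $\|u^r/r\|_{L^\infty}$ and Gr\"onwall. This gives the integrability $r^{k-1}|u^r\omega|\le \|u^r/r\|_{L^\infty}\, r^k|\omega|$ needed for dominated convergence; the paper regularizes $|\omega|$ instead of cutting off in space, which is only a technical difference. One caution: use Danchin's estimate here, not Lemma~\ref{lem:FeSv}, since the latter needs $\|r\omega(t)\|_{L^1}$, i.e.\ $P_2(t)<\infty$, which is exactly what you are trying to establish.
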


Although \cite[Lemma 3.1]{EY} is stated for bounded relative vorticities, its proof applies in a more general setting as well. The argument mainly consists of smooth regularization of $|\omega|$, the transport equation for $\omega/r$, and at the later stages, the integrability of $r^{k-1} |u^r\omega|$ is needed to apply the dominated convergence theorem. To verify the latter, we recall the classical estimate \cite[Lemma 2]{Danchin}, which says  
\[ 
\left\|\frac{u^r}{r}\right\|_{L^{\infty}} \le c\left\|\frac{\omega}{r}\right\|_{L^{3,1}}, 
\] 
for some absolute constant $c>0$, and note that \eqref{eq:axi_euler} implies
\[
\left(\partial_t + u^r\partial_r + u^z \partial_z \right) (r^{k-1}\omega) = k\frac{u^r}{r} (r^{k-1}\omega).
\]
By Gr\"{o}nwall's inequality, we get 
\[
P_k(t) \le P_k(0) \, e^{ck \|\omega_0/r\|_{L^{3,1}}t}. 
\]
Thus, given $P_k(0)<\infty$, we have $P_k(t)<\infty$ for all $t\ge 0$. This, combined with the above classical estimate, gives the required integrability of $r^{k-1}|u^r\omega|$ under our assumptions \eqref{eq:init_vorticity}. For the detailed proof of the lemma, we refer to \cite{EY}.
 
The next lemma provides an upper bound for the right-hand side of \eqref{eq:dif_P_k}, leading to a differential inequality for $P_2$ (and $P_k$) involving $u^r$.

\begin{lemma} \label{lem:rad_2}
For initial data satisfying \eqref{eq:init_vorticity} and $P_2(0)<\infty$, we have
\begin{equation} \label{eq:dif_ineq_p2}
P_2'(t) \le 2 \, E_0^{\frac{p-1}{2p}} \! A_p^{\frac{1}{2}} \, \|u^r(t)\|_{L^{\infty}}^{\frac{1}{p}} P_2(t)^{\frac{1}{2}}.
\end{equation}
In general, if one assumes \eqref{eq:init_vorticity} with $P_k(0)<\infty$ for some $k\ge 2$, then 
\begin{equation} \label{eq:dif_ineq_pk}
P_k'(t) \le k \, E_0^{\frac{p-1}{pk}} \! A_p^{\frac{1}{k}} \, \|u^r(t)\|_{L^{\infty}}^{\frac{p(k-2)+2}{pk}} P_k(t)^{\frac{k-1}{k}}.
\end{equation}
\end{lemma}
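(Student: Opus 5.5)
The plan is to start from Lemma~\ref{lem:rad_1}, which gives $P_k'(t)\le k\iint_\Pi r^{k-1}|u^r\omega|\,drdz$. We then bound the right-hand side by a single three-factor H\"older inequality on $\Pi$, using the two conserved quantities and $P_k$ itself, and pulling the leftover power of $|u^r|$ out in $L^\infty$. The inequality \eqref{eq:dif_ineq_p2} is just the case $k=2$ of \eqref{eq:dif_ineq_pk}, so it suffices to prove the general statement.

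First we write each conserved quantity as an integral over $\Pi$ (ignoring the harmless $2\pi$). We have $E_0=\iint_\Pi r(|u^r|^2+|u^z|^2)\,drdz\ge \iint_\Pi r|u^r|^2\,drdz$. Since $\|\omega/r\|_{L^p(\mathbb{R}^3)}^p=\iint_\Pi |\omega|^p r^{-p}\,r\,drdz$, we have $A_p^p=\iint_\Pi |\omega|^p r^{1-p}\,drdz$. Finally $P_k=\iint_\Pi r^k|\omega|\,drdz$.

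Next we factor the integrand as
\[
r^{k-1}|u^r||\omega| = \bigl(r|u^r|^2\bigr)^{a}\,\bigl(|\omega|^p r^{1-p}\bigr)^{b}\,\bigl(r^k|\omega|\bigr)^{c}\,|u^r|^{d},
\]
with $a=\frac{p-1}{pk}$, $b=\frac{1}{pk}$, $c=\frac{k-1}{k}$ and $d=1-2a=\frac{p(k-2)+2}{pk}$. We check the exponents directly. They satisfy $a+b+c=\frac{p}{pk}+\frac{k-1}{k}=1$. The power of $|u^r|$ is $2a+d=1$. The power of $|\omega|$ is $pb+c=\frac1k+\frac{k-1}{k}=1$. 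The power of $r$ is $a+b(1-p)+ck=\frac{p-1}{pk}-\frac{p-1}{pk}+k-1=k-1$. Because $k\ge 2$, we have $d\ge \frac{2}{pk}>0$, so $|u^r|^d\le\|u^r\|_{L^\infty}^d$. We then apply H\"older with exponents $1/a,1/b,1/c$, which is legitimate since $a+b+c=1$. This yields
\[
\iint_\Pi r^{k-1}|u^r\omega|\,drdz\le \|u^r\|_{L^\infty}^{d}\,E_0^{a}\,(A_p^p)^{b}\,P_k^{c}= E_0^{\frac{p-1}{pk}}A_p^{\frac1k}\|u^r\|_{L^\infty}^{\frac{p(k-2)+2}{pk}}P_k^{\frac{k-1}{k}}.
\]
Multiplying by $k$ gives \eqref{eq:dif_ineq_pk}, and setting $k=2$ gives \eqref{eq:dif_ineq_p2} with exponents $\frac{p-1}{2p},\frac12,\frac1p,\frac12$. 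For $p=\infty$ the same computation works with the $A_\infty$ factor handled in sup form, namely $b\to0$ with $(|\omega|^p r^{1-p})^{b}$ read as $(|\omega|/r)^{1/k}\,r^{1/k}$.

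The only real step is finding the exponents $a,b,c,d$, and the linear system above pins them down uniquely once the powers of $E_0$, $A_p$ and $P_k$ in the target are fixed. The remaining point to check is that all quantities are finite, which Lemma~\ref{lem:rad_1} and the Gr\"onwall bound on $P_k(t)$ already guarantee.
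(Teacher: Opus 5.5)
Your proposal is correct and is essentially the paper's proof: Lemma~\ref{lem:rad_1} followed by a single H\"older split with exactly the paper's exponents $\frac{p-1}{pk}$, $\frac{k-1}{k}$, $\frac{1}{pk}$, $\frac{p(k-2)+2}{pk}$. The $2\pi$ you set aside enters as the factor $(2\pi)^{-1/(pk)}\le 1$, so it only helps.

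One small slip in your $p=\infty$ aside: $(|\omega|^p r^{1-p})^{1/(pk)} = (|\omega|/r)^{1/k}\, r^{1/(pk)}$. The limiting factor is therefore just $(|\omega|/r)^{1/k}\le A_\infty^{1/k}$, not $(|\omega|/r)^{1/k} r^{1/k}$.
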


Before moving to the proof, we recall that
\begin{align*}
\iint_{\Pi} r|u^r|^2 \, drdz \le \iint_{\Pi} r(|u^r|^2+|u^z|^2) \, drdz = E_0 < \infty,& \\
\iint_{\Pi} \frac{|\omega|^p}{r^{p-1}} \, drdz = \frac{1}{2\pi} \left\|\frac{\omega}{r}\right\|_{L^p(\mathbb{R}^3)}^p = \frac{1}{2\pi}(A_p)^p <\infty&
\end{align*}
are conserved quantities depending only on the initial data. The core idea of the proof is to establish an inequality of the form
\begin{equation} \label{eq:rad1}
 \iint_{\Pi} r |u^r \omega| \, drdz \le \! \left( \iint_{\Pi} r|u^r|^2 drdz \right)^{\alpha} \! \left( \iint_{\Pi} r^2|\omega| drdz \right)^{\beta} \! \left( \iint_{\Pi} \frac{|\omega|^p}{r^{p-1}} drdz \right)^{\gamma} \! \|u^r\|_{L^{\infty}}^{\delta},
\end{equation}
with suitable exponents $0\le \alpha,\beta,\gamma, \delta \le 1$. 

\begin{proof}[Proof of Lemma \ref{lem:rad_2}]
In order to apply H\"{o}lder's inequality in \eqref{eq:rad1}, the exponents must satisfy 
\begin{align*}
\begin{cases}
\alpha+\beta+\gamma = 1, \\
\alpha+2\beta - (p-1)\gamma =1, \\
\beta + p\gamma = 1, \\
2\alpha +\delta = 1.
\end{cases}
\end{align*}
Solving this system, we find that
\begin{equation*}
\alpha = \frac{p-1}{2p}, \quad \beta = \frac{1}{2}, \quad \gamma = \frac{1}{2p}, \quad \delta = \frac{1}{p}.
\end{equation*}
Indeed, all the exponents are in $[0,1)$ for $3<p\le \infty$. Thus, by taking \eqref{eq:dif_P_k} with $k=2$ from Lemma \ref{lem:rad_1} and applying H\"{o}lder's inequality with the above exponents, we arrive at
\begin{align*}
P_2'(t) &\le 2\iint_{\Pi} r |u^r \omega| \, drdz  \\
&\le 2 \left( \iint_{\Pi} r|u^r|^2 \, drdz \right)^{\frac{p-1}{2p}} \! \left( \iint_{\Pi} r^2|\omega| \, drdz \right)^{\frac{1}{2}} \left( \iint_{\Pi} \frac{|\omega|^p}{r^{p-1}} \, drdz \right)^{\frac{1}{2p}} \|u^r\|_{L^{\infty}(\Pi)}^{\frac{1}{p}}\\
&\le 2 \Bigl(\frac{1}{2\pi}\Bigr)^{\frac{1}{2p}} E_0^{\frac{p-1}{2p}} P_2(t)^{\frac{1}{2}} A_p^{\frac{1}{2}} \|u^r\|_{L^{\infty}(\Pi)}^{\frac{1}{p}}.
\end{align*}
Noting that $\left(\frac{1}{2\pi} \right)^{\frac{1}{2p}} \le 1$, we obtain the desired inequality \eqref{eq:dif_ineq_p2}.

To obtain \eqref{eq:dif_ineq_pk}, we follow the same approach. First, changing the exponents in \eqref{eq:rad1} with
\[
\alpha = \frac{p-1}{pk}, \quad \beta = \frac{k-1}{k}, \quad \gamma = \frac{1}{pk}, \quad \delta = \frac{p(k-2)+2}{pk},
\]
we apply H\"{o}lder's inequality to derive the corresponding upper bound for $\iint_{\Pi} r^{k-1}|u^r \omega| \, drdz$. Then, using \eqref{eq:dif_P_k} with general $k\ge 2$ finishes the proof.
\end{proof}

Combining Lemma~\ref{lem:FeSv} and Lemma~\ref{lem:rad_2}, we now prove Proposition~\ref{prop:upper_bound}.  
 
\begin{proof}[Proof of Proposition~\ref{prop:upper_bound}] Substituting the estimate \eqref{eq:u_r_est} into \eqref{eq:dif_ineq_p2} gives
\begin{align*}
P_2'(t) \le 2\, E_0^{\frac{p-1}{2p}} \! A_p^{\frac{1}{2}} \, \|u^r\|_{L^{\infty}(\Pi)}^{\frac{1}{p}} P_2(t)^{\frac{1}{2}} \lesssim \Bigl( \frac{p}{p-3} \Bigr)^{\frac{1}{2p}} E_0^{\frac{3p-5}{6(p-1)}} A_p^{\frac{p}{2(p-1)}} P_2(t)^{\frac{3p-2}{6(p-1)}}.
\end{align*}
Since $\frac{1}{3} < \frac{3p-5}{6(p-1)} \le \frac{1}{2}$, we have $E_0^{\frac{3p-5}{6(p-1)}} \! \sim 1$, and this yields
\begin{equation*} 
P_2'(t) \le C \Bigl( \frac{p}{p-3} \Bigr)^{\frac{1}{2p}} A_p^{\frac{p}{2(p-1)}} P_2(t)^{\frac{3p-2}{6(p-1)}}, 
\end{equation*}
with some constant $C>0$ depending only on $E_0$. Solving this differential inequality, we find 
\begin{equation*} 
P_2(t) \le \left(P_2(0)^{\frac{3p-4}{6(p-1)}} + C  \Bigl( \frac{p}{p-3} \Bigr)^{\frac{1}{2p}} A_p^{\frac{p}{2(p-1)}}t \right)^{\frac{6(p-1)}{3p-4}}.
\end{equation*}
Noting that $2 \le \frac{6(p-1)}{3p-4} < 6$, this further implies
\begin{equation*} 
P_2(t) \lesssim P_2(0) + C^{\frac{6(p-1)}{3p-4}} \Bigl( \frac{p}{p-3} \Bigr)^{\frac{3(p-1)}{p(3p-4)}} \!A_p^{\frac{3p}{3p-4}} t^{\frac{6(p-1)}{3p-4}}.
\end{equation*}
Arguing as in \eqref{eq:near_est}, we have 
\begin{equation*} 
\Bigl( \frac{p}{p-3} \Bigr)^{\frac{3(p-1)}{p(3p-4)}} \!= \Bigl( \frac{p}{p-3} \Bigr)^{\frac{2}{5}} \Bigl( \frac{p}{p-3} \Bigr)^{-\frac{(p-3)(6p-5)}{5p(3p-4)}} \sim \Bigl( \frac{p}{p-3} \Bigr)^{\frac{2}{5}},
\end{equation*}
and therefore, 
\begin{align} \label{eq:p_2_upper}
\begin{split}
P_2(t) &\lesssim P_2(0) + C^{\frac{6(p-1)}{3p-4}} \Bigl( \frac{p}{p-3} \Bigr)^{\frac{2}{5}} A_p^{\frac{3p}{3p-4}} t^{\frac{6(p-1)}{3p-4}} \\
&\le C_1 \Bigl( \frac{p}{p-3} \Bigr)^{\frac{2}{5}} \! (1+A_p)^{\frac{3p}{3p-4}} (1+t)^{\frac{6(p-1)}{3p-4}},
\end{split}
\end{align}
where $C_1>0$ depends only on the initial data through $E_0$ and $P_2(0)$. Since $A_p= \left\| \omega/r \right\|_{L^p(\mathbb{R}^3)} =\left\| \omega_0/r \right\|_{L^p(\mathbb{R}^3)}<\infty$ is also a constant depending on the initial data, it can be absorbed into $C_1$, and we establish \eqref{eq:main1}.

The upper bound \eqref{eq:main2} for $u^r$ is a direct consequence of substituting the above \eqref{eq:p_2_upper} into \eqref{eq:u_r_est}. Indeed, by doing so, we get
\begin{align*}
\|u^r (\cdot, t)\|_{L^{\infty}} &\lesssim \Bigl( \frac{p}{p-3} \Bigr)^{\frac{1}{2}}  E_0^{\frac{p-3}{6(p-1)}} A_p^{\frac{p}{2(p-1)}} P_2(t)^{\frac{p}{6(p-1)}} \\
&\le C \Bigl( \frac{p}{p-3} \Bigr)^{\frac{17p-15}{30(p-1)}} (1+A_p)^{\frac{2p}{3p-4}}\left(1+t \right)^{\frac{p}{3p-4}},
\end{align*}
with some constant $C>0$, depending on $E_0$ and $P_2(0)$. As before, we argue as in \eqref{eq:near_est} to see that
\begin{equation*} 
\Bigl( \frac{p}{p-3} \Bigr)^{\frac{17p-15}{30(p-1)}} \! = \Bigl( \frac{p}{p-3} \Bigr)^{\frac{3}{5}} \Bigl( \frac{p}{p-3} \Bigr)^{-\frac{p-3}{30(p-1)}} \sim \Bigl( \frac{p}{p-3} \Bigr)^{\frac{3}{5}},
\end{equation*}
which implies
\begin{equation} \label{eq:u_r_upper}
\|u^r (\cdot, t)\|_{L^{\infty}(\mathbb{R}^3)} \le C_2 \Bigl( \frac{p}{p-3} \Bigr)^{\frac{3}{5}} (1+A_p)^{\frac{2p}{3p-4}} \left(1+t \right)^{\frac{p}{3p-4}},
\end{equation}
where $C_2>0$ depends only on initial data through $E_0$, $P_2(0)$, and $A_p<\infty$ is also a conserved quantity depending only on initial data.

From \eqref{eq:u_r_upper}, for the radial component of the flow map, we obtain
\begin{equation} \label{eq:phi_upper}
\phi^r_t(r_0,z_0) \le r_0 + \int_0^{t} \|u^r(s)\|_{L^{\infty}} \, ds \le r_0 + C \Bigl( \frac{p}{p-3} \Bigr)^{\frac{3}{5}} (1+A_p)^{\frac{2p}{3p-4}}  \bigl(1+t\bigr)^{\frac{4(p-1)}{3p-4}},
\end{equation}
with $C:=\frac{(3p-4)C_2}{4(p-1)} >0$ depending only on $E_0$ and $P_2(0)$. Then
\begin{align*}
\left|\omega (\phi_t(r_0,z_0),t)\right| &= \frac{|\omega_0 (r_0,z_0)|}{r_0} \phi_t^r(r_0,z_0) \\
&\le |\omega_0(r_0,z_0)|+ \left[C \Bigl( \frac{p}{p-3} \Bigr)^{\frac{3}{5}} (1+A_p)^{\frac{2p}{3p-4}} \bigl( 1+t \bigr)^{\frac{4(p-1)}{3p-4}} \right] \frac{|\omega_0(r_0,z_0)|}{r_0}.
\end{align*}
Since both $\omega_0$ and $\omega_0/r$ are in $L^p(\mathbb{R}^3)$, we consequently get
\[
\|\omega (t)\|_{L^p(\mathbb{R}^3)} \le \|\omega_0\|_{L^p(\mathbb{R}^3)} + C \Bigl( \frac{p}{p-3} \Bigr)^{\frac{3}{5}}  (1+A_p)^{\frac{2p}{3p-4}}  \bigl( 1+t \bigr)^{\frac{4(p-1)}{3p-4}} \left\| \frac{\omega_0}{r} \right\|_{L^p(\mathbb{R}^3)}.
\]
Thus, for some $C_3>0$, depending only on $E_0$, $P_2(0)$, and $\|\omega_0\|_{L^p(\mathbb{R}^3)}$, we have
\begin{equation} \label{eq:w_Lp_upper}
\|\omega (t)\|_{L^p(\mathbb{R}^3)} \le C_3 \Bigl( \frac{p}{p-3} \Bigr)^{\frac{3}{5}} (1+A_p)^{\frac{5p-4}{3p-4}}  \bigl(1+t\bigr)^{\frac{4(p-1)}{3p-4}}.
\end{equation}
Again, $A_p<\infty$ can be treated as a constant depending on initial data, which proves \eqref{eq:main3}. 

Finally, it remains to show \eqref{eq:main4}. Since $\|\omega_0\|_{L^1(\Pi)} = \frac{1}{2\pi} \|\frac{\omega_0}{r}\|_{L^1(\mathbb{R}^3)} < \infty$, H\"{o}lder's inequality implies that any initial data satisfying $P_k(0)<\infty$ with $k\ge 2$ also satisfies $P_2(0)<\infty$. Therefore, we have the above estimates, in particular \eqref{eq:u_r_upper} for $u^r$.

Solving the differential inequality \eqref{eq:dif_ineq_pk}, we obtain 
\[
P_k' (t) \le \left( P_k(0)^{\frac{1}{k}} + E_0^{\frac{p-1}{pk}}A_p^{\frac{1}{k}} \int_0^t \|u^r(s)\|_{L^{\infty}}^{\frac{p(k-2)+2}{pk}}  ds\right)^{k},
\] 
which also gives 
\[
P_k'(t) \lesssim_k P_k(0) + E_0^{\frac{p-1}{p}} A_p \left(\int_0^t \|u^r(s)\|_{L^{\infty}}^{\frac{p(k-2)+2}{pk}}  ds \right)^k. 
\]
Using the bound \eqref{eq:u_r_upper} and noting that $C_2^{\frac{p(k-2)+2}{p}} \sim_k 1$ since $k-2\le \frac{p(k-2)+2}{p} < k-1$, we arrive at 
\[
P_k'(t) \le C_4 \Bigl(\frac{p}{p-3} \Bigr)^{\frac{3p(k-2)+6}{5p}}\bigl(1+t\bigr)^{\frac{(4k-2)(p-1)}{k(3p-4)}}
\]
for some constant $C_4>0$, depending only on $k$ and on initial data through $E_0$, $P_k(0)$, $A_p$. To complete the proof, we proceed as in \eqref{eq:near_est} and observe that  
\[
\Bigl(\frac{p}{p-3}\Bigr)^{\frac{3p(k-2)+6}{5p}} = \Bigl(\frac{p}{p-3}\Bigr)^{\frac{3k-4}{5}} \Bigl(\frac{p}{p-3}\Bigr)^{-\frac{2(p-3)}{5p}} \sim \Bigl(\frac{p}{p-3}\Bigr)^{\frac{3k-4}{5}}.
\]
\end{proof}

\section{Vorticity maximum under mildly unbounded relative vorticity} \label{sec:mildly}

This section is devoted to the proof of Theorem~\ref{thm:mildly}. Recall that our initial data are given by \eqref{eq:init_S}--\eqref{eq:Phi_bound}, where the relative vorticity $\omega_0/r$ is allowed to be unbounded near the axis in $S_h$, with its growth still controlled by some $\Phi$ satisfying $(\Phi 1)$--$(\Phi 3)$; when deriving the improved estimate \eqref{eq:mild2}, we assume the stronger condition $(\Phi 3')$ instead of $(\Phi 3)$. 

Before proceeding with the proof, we notice the following elementary property of $\Phi$: 
\begin{equation} \label{eq:log_con_prop}
\Phi (s_1+s_2) \le \Phi (s_1) \, \Phi (s_2) \qquad \text{for all } s_1,s_2\ge 0.
\end{equation}
Indeed, as a positive log-concave function on $[0,\infty)$,  $\Phi$ satisfies
\[
\Phi (\lambda s_1 + (1-\lambda)s_2) \ge \Phi(s_1)^{\lambda} \Phi(s_2)^{1-\lambda} \qquad \text{for all } s_1,s_2 \ge 0  \text{ and } 0 \le \lambda \le 1.
\]
Substituting $s_1=s$, $s_2=0$, and $s_1=0$, $s_2=s$ onto the above log-concavity condition and using $\Phi(0)=1$, we obtain 
\[
\Phi (\lambda s) \ge \Phi (s)^{\lambda}, \qquad \Phi((1-\lambda)s) \ge \Phi (s)^{1-\lambda}.
\]
Then, multiplying both gives
\[
\Phi (\lambda s) \, \Phi((1-\lambda)s) \ge \Phi (s).
\]
Since this holds for all $s\ge 0$ and $0 \le \lambda \le 1$, we directly recover \eqref{eq:log_con_prop}.



\begin{proof}[Proof of Theorem~\ref{thm:mildly}]

Under the assumption \eqref{eq:init_S} and \eqref{eq:Phi_bound}, for each finite $p$, we have
\begin{align} \label{eq:A_p_estimate}
\begin{split}
A_p &\le \left\| \frac{\omega_0}{r} \right\|_{L^p(\mathbb{R}^3\setminus \tilde{S}_h)} + \left( 2\pi \iint_{S_h} \left| \frac{\omega_0}{r} \right|^p  r\, drdz \right)^{1/p} \\
&\le \left\| \frac{\omega_0}{r} \right\|_{L^p(\mathbb{R}^3\setminus \tilde{S}_h)} +(4\pi h)^{1/p} \|\omega_0\|_{L^{\infty}} \left( \int_{0}^{1} \left[ \Phi \bigl( \log \tfrac{1}{r} \bigr) \right]^p r \, dr \right)^{1/p} \\
&\lesssim_{\omega_0, h} \left( \int_{0}^{1} \left[ \Phi \bigl( \log \tfrac{1}{r} \bigr) \right]^p r \, dr \right)^{1/p} = \left( \int_{0}^{\infty} \Phi (s)^p e^{-2s}ds \right)^{1/p} < \infty 
\end{split}
\end{align}
Here, we first used the triangle inequality and $(2h)^{1/p} \le \max \{2h,1\}$. Since $\Phi$ is non-decreasing and $\Phi(0)=1$, we have $\int_0^{\infty} \Phi(s)^p e^{-2s} ds \ge 1/2$, which justifies the third line. At the end, we used the change of variables $s=\log (1/r)$, and the finiteness follows by choosing $\varepsilon =1/p$ in $(\Phi 3)$, so that $\Phi(s)e^{-s/p}$ is continuous on $[0,\infty)$ and vanishes at infinity, as well as $\int_0^{\infty} e^{-s}ds = 1$.

So any initial data satisfying \eqref{eq:init_S} and \eqref{eq:Phi_bound} also satisfies \eqref{eq:init_vorticity} for every $3< p<\infty$. Since we also have $P_2(0)<\infty$, one can apply Proposition~\ref{prop:upper_bound} to get the estimates \eqref{eq:main1}--\eqref{eq:main3}, as well as \eqref{eq:phi_upper} for the flow map. To simplify, we assume $p\ge 4$, so that $\left( \frac{p}{p-3} \right)^{3/5}$ is absorbed into the constant. Then, rewriting \eqref{eq:phi_upper} as
\begin{equation} \label{eq:flowradial1}
\phi^r_t(r_0,z_0) \le r_0 + C (1+A_p)^{\frac{2p}{3p-4}}  \bigl(1+t\bigr)^{\frac{4(p-1)}{3p-4}}, 
\end{equation}
we conclude that this holds for every $4\le p < \infty$ and some constant $C>0$, depending only on the initial data through $E_0$, $P_2(0)$ (but not on $p$).  

By $(\Phi 1)$, we know that $\log \Phi $ is continuous, non-decreasing, and concave on $[0,\infty)$ with $\log \Phi (0) = 0$. We define its \footnote{More precisely, $H$ is  the convex conjugate of $-\log \Phi$, up to a change of sign in the dual variable.}{\textit{convex conjugate}} by
\[
H(s^{*}):=\sup_{s\ge 0} \bigl\{ \log \Phi(s) - s^{*}s \bigr\}.
\]
Notice that, for every $s^{*}>0$, the function $s\mapsto \log \Phi (s) - s^{*}s$ is continuous, vanishes at $s=0$, and tends to $-\infty$ as $s\to \infty$ by the property $(\Phi 3)$. So the above supremum is finite, nonnegative, and attained. Moreover, $H$ is non-increasing and convex, as it is the pointwise supremum of affine, non-increasing functions of $s^{*}$. 

From the definition of $H$, we have  
\[
\Phi(s)e^{-\frac{s}{p}} = e^{\log \Phi (s) - \frac{s}{p}} \le e^{H(\frac{1}{p})}.
\] 
Combining this with \eqref{eq:A_p_estimate} gives
\[
A_p \lesssim_{\omega_0,h} \left( \int_{0}^{\infty} \left[\Phi(s)e^{-\frac{s}{p}}\right]^p e^{-s}ds \right)^{1/p} \le e^{H(\frac{1}{p})},
\]
in particular, we have $1+A_p \lesssim_{\omega_0,h} e^{H(p^{-1})}$ since $H(p^{-1})\ge 0$. Substituting this now into \eqref{eq:flowradial1}, we conclude that there exists $C>0$, depending only on the initial data and $h$, such that 
\[
\phi_t^r (r_0,z_0) \le r_0 + \mathcal{A}_p(t), \qquad \mathcal{A}_p(t):= C e^{\frac{2p}{3p-4}H(p^{-1})}  \bigl(1+t\bigr)^{\frac{4(p-1)}{3p-4}}
\]
Here, we also used the fact that 
\begin{equation} \label{eq:p_choice}
\frac{2}{3}< \frac{2p}{3p-4} \le 1,
\end{equation} 
which implies $C(\omega_0,h)^{\frac{2p}{3p-4}}$ is comparable to a constant that depends only on $\omega_0$ and $h$. 

As $p$ varies in $[4,\infty)$, our strategy is to choose $p:=p(t)$ for each fixed $t$, trading the exponent loss against the growth of $\mathcal{A}_p(t)$. In fact, taking logarithms on both sides,
\begin{equation} \label{eq:flowradial2}
\log \mathcal{A}_p(t) = \log C + \frac{2p}{3p-4} \left[H(p^{-1}) + \frac{2}{3p} \log (1+t) \right] + \frac{4}{3} \log (1+t),
\end{equation}
we see that the two terms inside the brackets have competing monotonicity: $H(p^{-1})$ is non-decreasing on $p$, whereas $\frac{2}{3p} \log (1+t)$ is decreasing. Therefore, it is natural to consider the infimum of this expression over $p$.

From the definition of $H$, it follows that $\log \Phi (s) \le H(s^{*}) + s^{*}s$ for all $s\ge 0$ and $s^{*}>0$, further giving
\[
\log \Phi (s) \le \inf_{s^{*}>0} \bigl\{ H(s^{*}) + s^{*}s \bigr\}.
\]
Conversely, by the concavity of $\log \Phi$, for every $s>0$ there exists a supporting slope $s'\ge 0$ such that 
\[
\log \Phi (s_1) \le \log \Phi (s) + s'(s_1 - s)
\]
for all $s_1\ge 0$. Taking the supremum over $s_1\ge 0$, 
\[
H(s') + s's \le \log \Phi (s).
\]
We may assume $s'>0$: if $s'=0$, then $\log \Phi(s_1) \le \log \Phi (s)$ for all $s_1\ge 0$ and $\log \Phi $ must be constant on $[s,\infty)$, in which case, we get $H(0)=\log \Phi (s)$ is also finite. 
Putting everything together, we can write
\begin{equation} \label{eq:convex_duality}
\log \Phi (s) = \inf_{s^{*}>0} \bigl\{ H(s^{*}) + s^{*}s \bigr\}
\end{equation}
for $s\ge 0$, which is the corresponding convex duality relation.

Now, choosing $s=\frac{2}{3}\log (1+t)$ and letting $s^{*} = \frac{1}{p}$ in \eqref{eq:convex_duality}, we obtain
\begin{equation} \label{eq:convex_duality_p}
\inf_{p>0} \left[ H(p^{-1}) + \frac{2}{3p} \log (1+t) \right]  = \log \Phi \Bigl(\tfrac{2}{3}\log (1+t)\Bigr). 
\end{equation}
Moreover, for sufficiently large $t$, the infimum can be restricted to $p\ge 4$ (or even $p\ge p_0 \gg 1$). Indeed, for $p<4$ one has 
\[
H(p^{-1}) + \frac{2}{3p} \log (1+t) > \frac{1}{6} \log (1+t) > \log \Phi \Bigl(\tfrac{2}{3}\log (1+t)\Bigr),
\]
where the last inequality follows from $\log \Phi (s) - \varepsilon s \to -\infty$ as $s\to \infty$. This means that there exists $t_0\ge 0$, depending on $\Phi$, such that for all $t > t_0$,
\[
\inf_{p\ge 4} \left[ H(p^{-1}) + \frac{2}{3p} \log (1+t) \right]  = \log \Phi \Bigl(\tfrac{2}{3}\log (1+t)\Bigr). 
\]
Applying this to \eqref{eq:flowradial2} and recalling \eqref{eq:p_choice} (we just need $\frac{2p}{3p-4} \le 1$ for now), we arrive at
\begin{equation} \label{eq:slope_p}
\inf_{p\ge 4} \log \mathcal{A}_p(t) \le \log C + \log \Phi \Bigl(\tfrac{2}{3}\log (1+t)\Bigr) + \frac{4}{3}\log (1+t)
\end{equation}
for all $t > t_0$. For $0\le t\le t_0$, we may simply choose $p=4$ and absorb the contribution coming from 
\[
H\left(\frac{1}{4}\right) + \frac{1}{6} \log (1+t) \le H\left(\frac{1}{4}\right) + \frac{1}{6} \log (1+t_0)
\]
into the constant $\log C$, so that $C$ becomes dependent on $\Phi$ as well. Hence, for all $t\ge 0$, 
\[
\inf_{p\ge 4} \mathcal{A}_p(t) \le C \, \Phi \Bigl(\tfrac{2}{3}\log (1+t)\Bigr)  (1+t)^{\frac{4}{3}},
\]
and 
\begin{equation} \label{eq:flowradial3}
\phi_t^r (r_0,z_0) \le r_0 + \mathcal{A}(t), \qquad  \mathcal{A} (t):= C \, \Phi \Bigl(\tfrac{2}{3}\log (1+t)\Bigr)  (1+t)^{\frac{4}{3}},
\end{equation}
where the constant $C$ (we may assume $C\ge 1$) depends only on the initial data, $h$, and $\Phi$.

On the other hand, we have the exponential upper bound due to Danchin \cite{Danchin}. Using the inequality 
\[
\left\| \frac{u^r}{r} \right\|_{L^{\infty}} \le c \left\| \frac{\omega}{r} \right\|_{L^{3,1}}
\]
of \cite[Lemma 2]{Danchin}, which holds with some absolute constant $c>0$, it follows that
\[
\phi^r_t(r_0,z_0) \le r_0 +  \int_0^{t} \left\|\frac{u^r(\cdot, s)}{r}\right\|_{L^{\infty}} \phi^r_t(r_0,z_0) \, ds \le r_0 + c \left\| \frac{\omega_0}{r} \right\|_{L^{3,1}} \int_0^t \phi^r_t(r_0,z_0) \, ds.
\]
By Gr\"{o}nwall's inequality, this gives
\begin{equation} \label{eq:flowexp}
\phi^r_t(r_0,z_0) \le r_0 e^{C_0 t},
\end{equation}
where $C_0 :=c\left\| \omega_0/r \right\|_{L^{3,1}}$ depends only on the initial data.

We now combine the bounds \eqref{eq:flowradial3} and \eqref{eq:flowexp} to complete the proof. Note that \eqref{eq:flowradial3} is better suited away from the axis, whereas \eqref{eq:flowexp} is the only option near the axis. 

Fix $\delta := \delta(t)$ with $0<\delta \ll 1$, to be chosen later. Taking the identity
\[
\omega (\phi_t(r_0,z_0),t) = \frac{\omega_0(r_0,z_0)}{r_0} \phi_t^r (r_0,z_0),
\]
we apply \eqref{eq:flowradial3} for $(r_0,z_0) \in \left(\Pi\setminus S_h \right) \cup \{ r\ge \delta \}$ to obtain
\[ \left|\omega (\phi_t(r_0,z_0),t)\right| \le \left\|\omega_0\right\|_{L^{\infty}} + \left( \left\| \frac{\omega_0}{r} \right\|_{L^{\infty} (\mathbb{R}^3\setminus \tilde{S}_h)} + \sup_{ S_h \cap \{r\ge \delta\} }\left|\frac{\omega_0(r,z)}{r}\right| \right) \mathcal{A}(t). \]
For $(r_0,z_0) \in S_h \cap \{r<\delta\}$, we apply \eqref{eq:flowexp} to get
\[ \left|\omega (\phi_t(r_0,z_0),t)\right| \le \left( \sup_{S_h \cap \{r<\delta\}} |\omega_0(r,z)| \right) e^{C_0t}. \]
Combining these estimates and applying \eqref{eq:Phi_bound} yields
\begin{align*}
\left\| \omega(\cdot,t) \right\|_{L^{\infty}} \le \left\| \omega_0 \right\|_{L^{\infty}} + \Biggl( \left\| \frac{\omega_0}{r} \right\|_{L^{\infty} (\mathbb{R}^3\setminus \tilde{S}_h)} +\|\omega_0\|_{L^{\infty}} &\sup_{ \delta \le r\le 1} \Phi \Bigl(\log \frac{1}{r}\Bigr) \Biggr) \mathcal{A}(t) \\
&+ \|\omega_0\|_{L^{\infty}} \! \left( \sup_{0< r<\delta} r\Phi \Bigl( \log \frac{1}{r} \Bigr) \right) e^{C_0t}.
\end{align*}
Since $\mathcal{A} (t)\ge 1$, $e^{C_0t}\ge 1$ and $\Phi (s) \ge 1$, after collecting the initial data dependent quantities into a single constant $C'$, we obtain
\begin{equation} \label{eq:max_v1}
\left\| \omega(\cdot,t) \right\|_{L^{\infty}} \le C' \left[  \sup_{ \delta \le r\le 1} \Phi \Bigl(\log \frac{1}{r}\Bigr) \mathcal{A}(t) + \sup_{0< r<\delta} r \Phi \Bigl( \log \frac{1}{r} \Bigr) e^{C_0t} \right].
\end{equation}

Recall that, from $(\Phi 1)$ and $(\Phi 2)$, we have $\Phi (s)$ is non-decreasing on $[0,\infty)$, while $\Phi(s) e^{-s}$ is non-increasing on $[s_0,\infty)$ for some $s_0\ge 0$. Then, for all sufficiently small $\delta$ with $\delta \le e^{-s_0}$, 
\[
\sup_{ \delta \le r\le 1} \Phi \Bigl(\log \frac{1}{r}\Bigr) = \Phi \Bigl( \log \frac{1}{\delta} \Bigr), \qquad \sup_{0< r<\delta} r \, \Phi \Bigl( \log \frac{1}{r} \Bigr) = \delta \, \Phi \Bigl( \log \frac{1}{\delta} \Bigr),
\]
and substituting into \eqref{eq:max_v1} yields
\begin{equation*}
\left\| \omega(\cdot,t) \right\|_{L^{\infty}} \le C' \, \Phi \Bigl( \log \frac{1}{\delta} \Bigr) \Bigl(  \mathcal{A}(t) + \delta e^{C_0t} \Bigr).
\end{equation*}
Choosing $\delta = e^{-s_0} e^{-C_0t}$ at each time $t$, so that $\delta \le e^{-s_0}$, we obtain 
\begin{equation} \label{eq:max_v2}
\left\| \omega(\cdot,t) \right\|_{L^{\infty}} \le C' \, \Phi \big( s_0+C_0t \big) \Bigl(  \mathcal{A}(t) + e^{-s_0} \Bigr) \le 2\, C' \, \Phi \big( s_0+C_0t \big)  \mathcal{A}(t).
\end{equation}
Substituting $\mathcal{A}(t)$ from \eqref{eq:flowradial3} and collecting $C'$ into $C$, we arrive at 
\begin{equation*}
\left\| \omega(\cdot,t) \right\|_{L^{\infty}} \le C \, (1+t)^{\frac{4}{3}} \Phi \Bigl(\tfrac{2}{3}\log (1+t)\Bigr) \Phi \big( s_0 + C_0 t \big)
\end{equation*}
with some constant $C$, depending only on the initial data, $h$, and $\Phi$. Finally, \eqref{eq:log_con_prop} implies
\[
\Phi (s_0 + C_0 t) \le \Phi (s_0) \Phi (C_0t),
\]
and including the constant $\Phi (s_0) \ge 1$ into $C$, we conclude the desired estimate \eqref{eq:mild1}.

\medskip 

To obtain \eqref{eq:mild2} under the stronger assumption $(\Phi 3')$, we further keep track of the factor $\frac{2p}{3p-4}$ in \eqref{eq:flowradial2}. In the proof above, we simply used the right hand side of \eqref{eq:p_choice}. 

Let $s^{*} = \frac{1}{p(t)}$ be any positive supporting slope of $\log \Phi$ at $s=\frac{2}{3}\log (1+t)$. Then we have 
\[
\frac{2}{3p(t)} \log (1+t) \le \log \Phi \Bigl(\tfrac{2}{3} \log (1+t) \Bigr),
\]
which reads as
\begin{equation} \label{eq:slope_p2}
p(t) \ge \frac{\frac{2}{3}\log (1+t)}{ \log \Phi \Bigl( \frac{2}{3}\log (1+t) \Bigr)}.
\end{equation}
Combining this with $(\Phi 3)$ gives $p(t) \to \infty$ as $t\to \infty$, and earlier we used this fact to conclude \eqref{eq:slope_p} for $t\ge t_0$ as $p(t)\ge 4$. Now, coming back to \eqref{eq:flowradial2},  for all sufficiently large $t$, we have 
\begin{align*}
\inf_{p\ge 4} \log \mathcal{A}_p(t) &\le \log C + \frac{2p(t)}{3p(t)-4}\, \log \Phi \Bigl(\tfrac{2}{3}\log (1+t)\Bigr) + \frac{4}{3}\log (1+t) \\
&= \log C + \frac{8}{3(3p(t)-4)}\, \log \Phi \Bigl(\tfrac{2}{3}\log (1+t)\Bigr) + \frac{2}{3}\, \log \Phi \Bigl(\tfrac{2}{3}\log (1+t)\Bigr)  + \frac{4}{3}\log (1+t).
\end{align*}
For the second term, we first use $p(t)\ge 4$ and then substitute \eqref{eq:slope_p2} to find that
\[
\frac{8}{3(3p(t)-4)}\, \log \Phi \Bigl(\tfrac{2}{3}\log (1+t)\Bigr) \le \frac{4}{3p(t)}\, \log \Phi \Bigl(\tfrac{2}{3}\log (1+t)\Bigr) \le \frac{4}{3} \left(\frac{\log \Phi \Bigl(\frac{2}{3}\log (1+t)\Bigr)}{\sqrt{\frac{2}{3}\log (1+t)}}\right)^2. 
\]
Now, under the assumption $(\Phi 3')$, 
\[
\frac{\log \Phi \Bigl(\frac{2}{3}\log (1+t)\Bigr)}{\sqrt{\frac{2}{3}\log (1+t)}} \lesssim 1,
\] 
and consequently, 
\[
\frac{8}{3(3p(t)-4)}\, \log \Phi \Bigl(\tfrac{2}{3}\log (1+t)\Bigr) \lesssim 1.
\]
Absorbing this constant into $\log C$, we arrive at
\[
\inf_{p\ge 4} \log \mathcal{A}_p(t) \le \log C + \frac{2}{3}\, \log \Phi \Bigl(\tfrac{2}{3}\log (1+t)\Bigr)  + \frac{4}{3}\log (1+t),
\]
for all sufficiently large $t$; say $t> t_0$ with some $t_0\ge 0$. Proceeding as before, for $0\le t\le t_0$, we can simply choose $p=4$ in \eqref{eq:flowradial2} and collect the constant into $\log C$. Thus, for all $t\ge 0$,
\begin{equation*} 
\phi_t^r (r_0,z_0) \le r_0 + \mathcal{A}(t), \qquad  \mathcal{A} (t):= C \, \Phi \Bigl(\tfrac{2}{3}\log (1+t)\Bigr)^{\frac{3}{2}}  (1+t)^{\frac{4}{3}},
\end{equation*}
where the constant $C$ depends only on the initial data, $h$, and $\Phi$. The rest is identical to the previous case; we simply substitute this new $\mathcal{A}(t)$ in \eqref{eq:max_v2} and use $\Phi (s_0 + C_0 t) \le \Phi (s_0) \Phi (C_0t)$ one more time.
\end{proof}

\section{Subexponential upper bound under odd symmetry and vertical confinement} \label{sec:no_exp}
  
We now return to the case of a fixed $3<p<\infty$. Recall that Proposition~\ref{prop:upper_bound} gives a polynomial upper bound for $\|\omega\|_{L^p}$, but not for $\|\omega\|_{L^{\infty}}$. The goal of this section is to obtain an $L^{\infty}$ bound while keeping $p$ fixed, under additional symmetry and vertical confinement assumptions. We show that, under such extra assumptions, the vorticity maximum grows at most subexponentially in time, refining the exponential upper bound \eqref{eq:exponential} for such data.


Using \eqref{eq:u^r}, we write the stretching rate $u^r/r$ as
\begin{equation} \label{eq:stretching_rate}
\frac{u^r(r,z,t)}{r} = \frac{1}{\pi} \iint_{\Pi} \frac{z-\overline{z}}{r^{5/2}\overline{r}^{1/2}} \, F'\Bigl( \frac{S^2}{r\overline{r}} \Bigr) \, \overline{\omega} \, d\overline{r} d\overline{z}.
\end{equation}
On the axis, we denote its limit by $a(z,t)$, that is,
\begin{equation} \label{eq:axis_strain}
a(z,t) := \lim_{r\to 0^{+}} \frac{u^r(r,z,t)}{r} = -\frac{3}{4} \iint_{\Pi} \frac{(z-\overline{z})\overline{r}^2}{\left(\overline{r}^2 + (z-\overline{z})^2\right)^{5/2}} \, \overline{\omega} \, d\overline{r} d\overline{z}, 
\end{equation}
where we also applied the asymptotic behavior of $-F'(s)$ at infinity. The following near axis H\"{o}lder estimates will be used in our analysis. Similar estimates can be found in earlier works, but here we provide them in a form, which is convenient for later use. 

\begin{lemma} \label{lem:axis_strain}
Let  $3<p<\infty$, and set $\alpha_p := 1-\frac{3}{p} \in (0,1)$. Suppose that 
\[
\frac{\omega}{r} \in L^1 (\mathbb{R}^3) \cap L^{p} (\mathbb{R}^3).
\]
Let $u^r/r$ and $a(z,t)$ be given by \eqref{eq:stretching_rate} and \eqref{eq:axis_strain}, respectively. Then, there is a constant $C_p$ given by
\begin{equation*}
C_p = C\left[ \Bigl(\frac{p}{p-3}\Bigr)^{\!\frac{2}{3}} \! + p \right]
\end{equation*} 
with some absolute constant $C>0$, such that
\begin{equation} \label{eq:ar_est}
\left| \frac{u^r(r,z,t)}{r} - a(z,t) \right| \le C_p \left\| \frac{\omega}{r} \right\|_{L^p(\mathbb{R}^3)} r^{\alpha_p}
\end{equation}
for every $r>0$ and $z\in\mathbb{R}$, and
\begin{equation} \label{eq:az_est}
\left| a(z_1,t) - a(z_2,t) \right| \le C_p \left\| \frac{\omega}{r} \right\|_{L^p(\mathbb{R}^3)} |z_1-z_2|^{\alpha_p}
\end{equation}
for all $z_1,z_2\in\mathbb{R}$.
\end{lemma}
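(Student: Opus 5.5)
The plan is to view $u^r/r$ through the three-dimensional Biot--Savart law rather than the half-plane kernel. For an axisymmetric vector field, $u^r/r$ is a smooth function of $(x_1,x_2,x_3)$ near the axis; indeed $\partial_1 u^1$ restricted to the axis equals $a(z,t)$. More usefully, writing $b:=u^r/r$ and using $\operatorname{div}u=0$, one has $\partial_r(r u^r)+\partial_z(r u^z)=0$. The standard identity (Danchin, Shirota--Yanagisawa) is that $b$ solves a five-dimensional Poisson-type problem: regarding $(r,z)$ as cylindrical coordinates in $\mathbb{R}^5$, one has $b = \partial_z \Delta_{\mathbb{R}^5}^{-1}(\omega/r)$ up to a constant.

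Concretely, the far-field kernel in \eqref{eq:axis_strain}, namely $(z-\bar z)\bar r^2/(\bar r^2+(z-\bar z)^2)^{5/2}$ with measure $d\bar r\,d\bar z$, is exactly $\partial_z$ of the $\mathbb{R}^5$ Newtonian kernel $|X-\bar X|^{-3}$ evaluated at $X$ on the axis. After integrating over the $S^3$ angular variable, the weight $\bar r^3 d\bar r$ pairs with $\bar\omega/\bar r$. Hence the first step is to establish
\[
\frac{u^r}{r}(r,z)=c\int_{\mathbb{R}^5}\frac{(z-\bar z)}{|X-\bar X|^{5}}\,g(\bar X)\,d\bar X,\qquad g:=\frac{\omega}{r}\ \text{viewed as a radial-in-}\mathbb{R}^4\text{ function on }\mathbb{R}^5.
\]
This can be verified either directly from \eqref{eq:stretching_rate}, via the angular-integral representation of $F'$ in terms of $\int_0^\pi$ kernels, or through the PDE $-\Delta_{\mathbb{R}^5}\psi_5=g$ with $\psi=r^2\psi_5$. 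Note that $\|g\|_{L^q(\mathbb{R}^5)}^q\sim\iint|\omega/r|^q r^3\,dr\,dz$. This is not the same as $\|\omega/r\|_{L^q(\mathbb{R}^3)}$, which carries the weight $r\,dr\,dz$, so a bare $L^p(\mathbb{R}^5)$ Morrey estimate does not apply directly. This mismatch is the main obstacle. The fix is that the 4D angular average of $|X-\bar X|^{-4}$ kernels gains integrability in $\bar r$.

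So the second step is to work directly in the half-plane with the averaged kernel $K(r,z;\bar r,\bar z)$ and prove a Hölder estimate by the classical near/far splitting. Fix $(r,z)$, and compare with $(0,z)$ for \eqref{eq:ar_est}, or compare $(0,z_1)$ with $(0,z_2)$ for \eqref{eq:az_est}. Let $\ell$ denote the displacement, either $r$ or $|z_1-z_2|$. In the region $S\le 2\ell$ around both points, bound each term separately by $|K|\lesssim \bar r^{1/2} r^{-5/2}\min\{(r\bar r)/S^2,(r\bar r)^{5/2}/S^5\}$ from \eqref{eq:F_est}. Then apply Hölder with $|\bar\omega|=|\bar\omega/\bar r|\bar r^{1/p}\cdot\bar r^{1-1/p}$ against $\|\omega/r^{1-1/p}\|_{L^p(\Pi)}$. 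The dual exponent integral $\iint_{S\le 2\ell}(\bar r^{\cdot}S^{-\cdot})^{p'}$ scales like $\ell^{(1-3/p)p'}$ and converges precisely because $p>3$. This produces the factor $(p/(p-3))^{2/3}$, via $(1/(1-3p'/\ldots))^{1/p'}$ as in \eqref{eq:near_est}.

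In the far region $S\ge2\ell$, use the mean value theorem. The difference of kernels is bounded by $\ell\,|\nabla K|$, and $\nabla K$ is controlled through $F''$ and the combination $G=-5F'-2sF''$ from \eqref{eq:G_def}. I will invoke the positivity, monotonicity and far-field decay of $G$ (Appendix, via \eqref{eq:F_ineq}) to get $|\nabla K|\lesssim \bar r^{3/p'}$-compatible bounds of order $S^{-5}\bar r^{3}$. The Hölder integral $\int_{S\ge2\ell}S^{-(5-\ldots)p'}$ then gives $\ell^{-1+\alpha_p}$ times a constant that blows up like $p$ as $p\to\infty$, since the exponent gap $\to 0$ at the upper end. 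This explains the $+p$ in $C_p$. The $\theta$-derivative of $r$-dependence in \eqref{eq:ar_est} requires also handling $\partial_r$ of $r^{-5/2}F'(S^2/(r\bar r))$, which is exactly where $G$ enters: $\partial_r$ produces $-\tfrac52F'-sF''$ combinations. Verifying the uniform far-field bound for this derivative, valid as $r\to0$, is the delicate point. I would first check it using the $s\to\infty$ asymptotics in the table, and then use monotonicity of $G$ to cover bounded $s$. The $L^1$ assumption is only needed to justify convergence of \eqref{eq:axis_strain}.
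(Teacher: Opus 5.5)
Your plan is essentially the paper's proof. It uses the same split into $\{S\le 2\ell\}$ and $\{S>2\ell\}$ and the same H\"older pairing against $\|\omega/r^{1-1/p}\|_{L^p(\Pi)}$. The far field is handled by the same mean value argument, with the cancellation packaged in $G$, and the factors $(p/(p-3))^{2/3}$ and $p$ come from the near and far dual integrals exactly as you say. Your $\mathbb{R}^5$ formula is also correct: writing $\psi=r^2\Psi$ gives $L\psi=-r\Delta_{\mathbb{R}^5}\Psi$, and the normalization reproduces the $-\tfrac34$ in \eqref{eq:axis_strain}.

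However, one displayed estimate is wrong, and as written the near field does not close. The kernel in \eqref{eq:stretching_rate} is $\frac{z-\bar z}{r^{5/2}\bar r^{1/2}}F'(S^2/(r\bar r))$, so \eqref{eq:F_est} gives $|K|\lesssim |z-\bar z|\,\bar r^{1/2}r^{-3/2}S^{-2}$. It is the factor $|z-\bar z|\le S$, together with $\bar r\le 3r$, that yields $|K|\lesssim (rS)^{-1}\lesssim \bar r^{-(1-1/p)}S^{-(1+1/p)}$ on $\{S\le 2r\}$. Your bound $\bar r^{1/2}r^{-5/2}\cdot r\bar r\, S^{-2}$ is of size $S^{-2}$ when $\bar r\sim r$. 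The dual integral is then $\iint_{S\le 2r}\bar r\,S^{-2p'}\,d\bar r\,d\bar z$, which diverges for every $p$. The scaling $\ell^{(1-3/p)p'}$ you quote is what the corrected bound produces.

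Your five-dimensional viewpoint, if you actually use it in the far field, is simpler than the paper's route and removes the ``delicate point'' you flag. Write $Y=(y,y_5)\in\mathbb{R}^4\times\mathbb{R}$ and let $e\in\mathbb{R}^4$ be a unit vector. If $S>2r$, every point $\bar X=(\bar y,\bar z)$ with $|\bar y|=\bar r$ satisfies $|X_\xi-\bar X|\ge S(\xi,z;\bar r,\bar z)\ge S-r\ge S/2$ for all $X_\xi$ on the straight segment from $(0,z)$ to $(re,z)$. Since $|\nabla_Y(y_5|Y|^{-5})|\lesssim |Y|^{-5}$, combine the mean value theorem, the sphere area $\sim\bar r^3$, and the factor $1/\bar r$ from $g=\bar\omega/\bar r$. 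This gives $|K(r,z;\bar r,\bar z)-K(0,z;\bar r,\bar z)|\lesssim r\bar r^2S^{-5}\lesssim rS^{-3}$ directly, uniformly as $r\to 0$.

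The paper instead differentiates the half-plane kernel in $\xi$. There the leading $s^{-5/2}$ terms of $-\tfrac52F'$ and $-sF''$ cancel, and bounding them separately would leave a non-integrable factor $\xi^{-1}$. That is why the paper needs $G$, its $s^{-7/2}$ decay, and Lemma~\ref{lem:F_add}. In the near field the $\mathbb{R}^5$ picture alone does not suffice, because of the weight mismatch you note, so one returns to \eqref{eq:F_est} there, as the paper does.
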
 

Note that the assumption $\omega/r \in L^1\cap L^{p}$ implies $\omega/r \in L^{3,1}$ as $p>3$. Therefore, the estimate $\|u^r/r\|_{L^{\infty}} \lesssim \|\omega_0/r\|_{L^{3,1}}$ from \cite{Danchin} already yields a uniform bound for $u^r/r$; in particular, this also ensures that $a(z,t)$ is uniformly bounded for all $z\in \mathbb{R}$ and $t\ge 0$. 

\begin{proof}[Proof of Lemma~\ref{lem:axis_strain}]
Denoting the kernel by 
\[
K(r,z;\overline{r},\overline{z}):= \frac{1}{\pi} \frac{z-\overline{z}}{r^{5/2}\overline{r}^{1/2}} \, F'\Bigl( \frac{S^2}{r\overline{r}} \Bigr), 
\]
we set 
\[
K (0,z;\overline{r},\overline{z}):= \lim_{r\to 0^{+}} K (r,z;\overline{r},\overline{z}) = -\frac{3}{4} \frac{(z-\overline{z})\overline{r}^2}{\left(\overline{r}^2 + (z-\overline{z})^2\right)^{5/2}}.
\]
Then, fixing a point $(r,z)\in \Pi$ with $r>0$, we decompose the integral into
\begin{align*}
\frac{u^r(r,z,t)}{r} - a(z,t) = I_{S\le 2r} + I_{S>2r} := &\iint_{S \le 2r} \left[ K(r,z;\overline{r},\overline{z}) - K(0,z;\overline{r},\overline{z}) \right] \overline{\omega} \, d\overline{r} d\overline{z}\\
&+  \iint_{S > 2r} \left[ K(r,z;\overline{r},\overline{z}) - K(0,z;\overline{r},\overline{z}) \right] \overline{\omega} \, d\overline{r} d\overline{z}.
\end{align*}
Recall that the abbreviation $S$ stands for $S(r,z;\overline{r},\overline{z})$, denoting the distance between $(r,z)$ and $(\overline{r},\overline{z})$. Also, we simply use $S_0$ for the distance between $(0,z)$ and $(\overline{r},\overline{z})$.

For the near-field regime $S\le 2r$, we first notice that $\overline{r} \le  3r$. When combined with the near-field estimate for $F'$ from \eqref{eq:F_est}, this gives
\[
\left| K(r,z;\overline{r},\overline{z}) \right| \lesssim \frac{|z-\overline{z}| \, \overline{r}^{1/2}}{r^{3/2}S^2} \lesssim \frac{1}{r S} \lesssim \frac{1}{\overline{r}^{1-1/p}S^{1+1/p}}.
\]
We also have 
\[
\left|K(0,z;\overline{r},\overline{z})\right| \lesssim \frac{1}{S_0^2} \le \frac{1}{\overline{r}^{1-1/p}S_0^{1+1/p}}.
\] 
Since $S\le 2r$ also implies $S_0 \le 3r$, applying H\"{o}lder's inequality for each term yields
\begin{align*}
\left| I_{S\le 2r} \right| &\lesssim \iint_{S\le 2r} \frac{1}{S^{1+1/p}} \, \frac{|\overline{\omega}|}{\overline{r}^{1-1/p}} \,d\overline{r}d\overline{z} + \iint_{S_0\le 3r} \frac{1}{S_0^{1+1/p}} \, \frac{|\overline{\omega}|}{\overline{r}^{1-1/p}} \,d\overline{r}d\overline{z} \\
&\le \left( \iint_{S\le 2r} \frac{1}{S^{\frac{p+1}{p-1}}} \, d\overline{r}d\overline{z} \right)^{\frac{p-1}{p}} \left\| \frac{\omega}{r^{1-1/p}} \right\|_{L^p(\Pi)} + \left( \iint_{S_0\le 3r} \frac{1}{S_0^{\frac{p+1}{p-1}}} \, d\overline{r}d\overline{z} \right)^{\frac{p-1}{p}} \left\| \frac{\omega}{r^{1-1/p}} \right\|_{L^p(\Pi)} 
\end{align*}
Using the fact that $3<p<\infty$, we note $\frac{2}{3} < \frac{p-1}{p} < 1$ and $2^{\frac{p-3}{p}} \sim 1$. Therefore,
\begin{align*}
\left( \iint_{S\le 2r} \frac{1}{S^{\frac{p+1}{p-1}}} \, d\overline{r}d\overline{z} \right)^{\frac{p-1}{p}}  &\lesssim \left( \int_{0}^{2r} S^{-\frac{2}{p-1}} \, dS \right)^{\frac{p-1}{p}}  \\
&= \Bigl(\frac{p-1}{p-3}\Bigr)^{\frac{p-1}{p}}  (2r)^{\frac{p-3}{p}} \lesssim \Bigl(\frac{p}{p-3}\Bigr)^{\frac{p-1}{p}}  r^{1-\frac{3}{p}}.
\end{align*}
Arguing as in \eqref{eq:near_est}, we obtain
\[
\Bigl(\frac{p}{p-3}\Bigr)^{\frac{p-1}{p}} = \Bigl(\frac{p}{p-3}\Bigr)^{\frac{2}{3}} \Bigl(\frac{p}{p-3}\Bigr)^{\frac{p-3}{3p}} \sim \Bigl(\frac{p}{p-3}\Bigr)^{\frac{2}{3}}.   
\]
The same bound applies for the second term, and putting everything together, we arrive at
\begin{equation} \label{eq:ar_near}
\left| I_{S\le 2r} \right| \lesssim \left(\frac{p}{p-3}\right)^{\frac{2}{3}} \left\| \frac{\omega}{r^{1-1/p}} \right\|_{L^p(\Pi)} r^{\alpha_p} .
\end{equation}

On the other hand, if $S>2r$, then using $S\ge |r-\overline{r}|$ we get $2S > 2r+|r-\overline{r}| \ge r+\overline{r} > \overline{r}$. Moreover, $S_0>r$ and $S_0\ge \overline{r}$. We write the kernel difference as
\begin{equation} \label{eq:ar_kernel}
K(r,z;\overline{r},\overline{z}) - K(0,z;\overline{r},\overline{z}) = \int_0^r \partial_{\xi} K (\xi,z;\overline{r},\overline{z}) \, d\xi
\end{equation}
and study the derivative $\partial_{\xi} K(\xi, z;\overline{r},\overline{z})$. Recalling $G$ from \eqref{eq:G_def}, whose relevant properties are established in Appendix~\ref{app:F_add}, we derive that
\[
\partial_{\xi} K(\xi, z;\overline{r},\overline{z}) = \frac{(z-\overline{z})}{2\pi \overline{r}^{1/2} \xi^{7/2}} \left[G\Bigl( \frac{S(\xi,z;\overline{r},\overline{z})^2}{\xi \overline{r}} \Bigr) + \frac{4(\xi-\overline{r})}{\overline{r}} F''\Bigl( \frac{S(\xi,z;\overline{r},\overline{z})^2}{\xi \overline{r}} \Bigr)\right].
\]
As $0<\xi<r$, by the triangle inequality, we get \[ 2S(\xi,z;\overline{r},\overline{z})\ge S(r,z;\overline{r},\overline{z}) + S(0,z;\overline{r},\overline{z}) - r = S +S_0 -r > \max \{S,S_0\} > \max \{r,\overline{r}\}. \]
So $\frac{S(\xi,z;\overline{r},\overline{z})^2}{\xi \overline{r}}\gtrsim 1$, and applying the far-field estimates for $G$ and $F''$ from \eqref{eq:G_add} and \eqref{eq:F_est}, respectively, we obtain 
\begin{align*}
\left| \partial_{\xi} K(\xi, z;\overline{r},\overline{z}) \right| &\lesssim \frac{|z-\overline{z}|}{\overline{r}^{1/2} \xi^{7/2}} \left[ \left(\frac{\xi \overline{r}}{S(\xi,z;\overline{r},\overline{z})^2}\right)^{7/2} + \frac{4|\xi-\overline{r}|}{\overline{r}} \left(\frac{\xi \overline{r}}{S(\xi,z;\overline{r},\overline{z})^2}\right)^{7/2}  \right] \\
&\lesssim \frac{ |z-\overline{z}| \overline{r}^3}{S(\xi,z;\overline{r},\overline{z})^{7}} + \frac{|z-\overline{z}| |\xi-\overline{r}| \overline{r}^2}{S(\xi,z;\overline{r},\overline{z})^{7}} \lesssim  \frac{1}{S^3}.
\end{align*}
Substituting this back into \eqref{eq:ar_kernel} yields
\[
\left| K(r,z;\overline{r},\overline{z}) - K(0,z;\overline{r},\overline{z}) \right| \lesssim \frac{r}{S^3}.
\]
Noting that $S\gtrsim \overline{r}$, we apply this kernel estimate, followed by  H\"{o}lder's inequality, to get
\begin{align*}
\left| I_{S> 2r} \right| &\lesssim r \iint_{S > 2r} \frac{1}{S^{2+1/p}} \, \frac{|\overline{\omega}|}{\overline{r}^{1-1/p}} \, d\overline{r} d\overline{z} \le r \left( \iint_{S> 2r} \frac{1}{S^{\frac{2p+1}{p-1}}} \, d\overline{r}d\overline{z} \right)^{\frac{p-1}{p}} \left\| \frac{\omega}{r^{1-1/p}} \right\|_{L^p(\Pi)} \\
&\lesssim r \left( \int_{2r}^{\infty} S^{-\frac{p+2}{p-1}} \, dS \right)^{\frac{p-1}{p}} \left\| \frac{\omega}{r^{1-1/p}} \right\|_{L^p(\Pi)}  \lesssim \left( \frac{p-1}{3}\right)^{\frac{p-1}{p}} r^{1-\frac{3}{p}} \left\| \frac{\omega}{r^{1-1/p}} \right\|_{L^p(\Pi)}.
\end{align*}
Here, we used the fact that $\frac{2}{3} < \frac{p-1}{p}<1$ and $2^{-\frac{3}{p}}\sim 1$ as $3<p<\infty$. Also noting that $3^{\frac{p-1}{p}} \sim 1$, and $p^{1/p}\sim 1$, we further obtain
\begin{equation} \label{eq:ar_far}
\left| I_{S> 2r} \right| \lesssim p \left\| \frac{\omega}{r^{1-1/p}} \right\|_{L^p(\Pi)} r^{\alpha_p}.
\end{equation}
Finally, combining \eqref{eq:ar_near} and \eqref{eq:ar_far}, we conclude the estimate \eqref{eq:ar_est}.

The estimate \eqref{eq:az_est} can be proved similarly. Indeed, setting the abbreviations $S_1:=S(0,z_1;\overline{r},\overline{z})$ and $S_2:=S(0,z_2;\overline{r},\overline{z})$, we split the integral into two regimes as
\begin{align*}
a(z_1,t) \! - \! a(z_2,t) \! = \! J_{S_1\le 2|z_1-z_2|}\! + \! J_{S_1>2|z_1-z_2|} \! &:=  \! \! \iint_{S_1\le 2|z_1-z_2|} \! \left[ K(0,z_1;\overline{r},\overline{z})\! - \! K(0,z_2;\overline{r},\overline{z}) \right] \overline{\omega} \, d\overline{r} d\overline{z} \\
&+\! \iint_{S_1>2|z_1-z_2|} \! \left[ K(0,z_1;\overline{r},\overline{z}) \! -\! K(0,z_2;\overline{r},\overline{z}) \right] \overline{\omega} \, d\overline{r} d\overline{z}.
\end{align*}
In the near-field regime $S_1\le 2|z_1-z_2|$, we have $S_2 \le 3|z_1-z_2|$. So the kernel estimates
\[
|K(0,z_1;\overline{r},\overline{z})| \lesssim \frac{1}{S_1^2} \lesssim \frac{1}{\overline{r}^{1-1/p} S_1^{1+1/p}}, \qquad |K(0,z_2;\overline{r},\overline{z})| \lesssim \frac{1}{S_2^2} \lesssim \frac{1}{\overline{r}^{1-1/p} S_2^{1+1/p}},
\]
combined with H\"{o}lder's inequality as before (changing $r$ with $|z_1-z_2|$), yields 
\begin{equation} \label{eq:az_near}
\left| J_{S_1\le 2|z_1-z_2|} \right| \lesssim \left(\frac{p}{p-3}\right)^{\frac{2}{3}} \left\| \frac{\omega}{r^{1-1/p}} \right\|_{L^p(\Pi)} |z_1-z_2|^{\alpha_p}. 
\end{equation}

The far-field regime $S_1>2|z_1-z_2|$ also follows by the argument used in the previous case. In fact, we have
\[
\left| \partial_{z} K(0,z;\overline{r},\overline{z}) \right| \lesssim \frac{1}{S(0,z;\overline{r},\overline{z})^3},
\]
and together with $2S(0,z;\overline{r},\overline{z}) \ge S_1 + S_2 - |z_1-z_2| > \max \{S_1,S_2\}$, this yields
\[
\left| K(0,z_1;\overline{r},\overline{z}) - K(0,z_2;\overline{r},\overline{z}) \right| \lesssim \frac{|z_1-z_2|}{S_1^3}.
\]
Substituting this into the integral $J_{S_1>2|z_1-z_2|}$ and applying H\"{o}lder's inequality as before, along with $S_1\ge \overline{r}$, we arrive at 
\begin{equation} \label{eq:az_far}
\left| J_{S_1> 2|z_1-z_2|} \right| \lesssim p \left\| \frac{\omega}{r^{1-1/p}} \right\|_{L^p(\Pi)} |z_1-z_2|^{\alpha_p}.
\end{equation}
Summing up \eqref{eq:az_near} and \eqref{eq:az_far}, we obtain \eqref{eq:az_est}, which completes the proof.

\end{proof}


Let us now assume that the initial data satisfy \eqref{eq:odd-in-z} in addition to \eqref{eq:init_vorticity}. We then define the vertical moment $Z$ and the $\varphi$-moment $M_{\varphi}$; their several properties and representation are given in Section~\ref{sec:pre}. 

As we noted there, $M_{\varphi}(0)<\infty$ implies $Z(0)<\infty$. Here, we show that it gives more: the finiteness of $M_{\varphi}(0)$ yields faster decay of $Z(t)$ as $t\to \infty$.

\begin{lemma} \label{lem:decay_Z}
Under the assumption $M_{\varphi}(0)<\infty$, the vertical moment $Z(t)$ satisfies 
\[
\int_0^{\infty} Z(t)^4 \, dt < \infty.
\]
In particular, there exists a constant $C>0$, depending only on the initial data, such that 
\begin{equation*} 
Z(t) \le C (1+t)^{-1/4} \qquad \text{for all } t\ge 0.
\end{equation*}
\end{lemma}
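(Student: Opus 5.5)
The plan is to combine the line integral representations \eqref{eq:Z_repn} and \eqref{eq:M_varphi_repn} with the dissipation identity \eqref{eq:M_varphi_der}. All three involve only the axial velocity. Set $w(z,t):=-u^z(0,z,t)$, which is positive for $z>0$ under \eqref{eq:odd-in-z}. Then
\[
Z(t)=\int_0^\infty z\,w\,dz,\qquad M_\varphi(t)=\frac12\int_0^\infty z^2 w\,dz,\qquad D(t):=\int_0^\infty z\,w^2\,dz\le -2M_\varphi'(t).
\]
The goal is the interpolation inequality $Z\lesssim M_\varphi^{1/2}D^{1/4}$. Raising it to the fourth power turns $Z^4$ into a multiple of $-M_\varphi'$, which is integrable in time.

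First I prove the interpolation. Fix $R>0$ and split $Z$ at $z=R$. On $\{z\le R\}$, Cauchy--Schwarz with weight $z$ gives
\[
\int_0^R zw\,dz\le \Bigl(\int_0^\infty z w^2\,dz\Bigr)^{1/2}\Bigl(\int_0^R z\,dz\Bigr)^{1/2}=\frac{R}{\sqrt2}\,D^{1/2}.
\]
On $\{z>R\}$, since $w>0$,
\[
\int_R^\infty zw\,dz\le \frac1R\int_0^\infty z^2w\,dz=\frac{2M_\varphi}{R}.
\]
Choosing $R=(M_\varphi/D^{1/2})^{1/2}$ (the degenerate case $D=0$ is trivial) yields $Z\lesssim M_\varphi^{1/2}D^{1/4}$.

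Next I use monotonicity. Since $M_\varphi$ is decreasing, $M_\varphi(t)\le M_\varphi(0)$. Hence
\[
Z(t)^4\lesssim M_\varphi(0)^2 D(t)\le -2M_\varphi(0)^2 M_\varphi'(t).
\]
Integrating over $[0,T]$ and using $M_\varphi\ge0$ gives
\[
\int_0^T Z^4\,dt\lesssim M_\varphi(0)^3
\]
uniformly in $T$. This is the first claim.

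For the pointwise decay, I use that $Z$ is decreasing. This gives $tZ(t)^4\le\int_0^tZ^4\,ds\le C$, so $Z(t)\le Ct^{-1/4}$. Combined with $Z(t)\le Z(0)\le\sqrt{2m_0M_\varphi(0)}$ for $t\le1$, this yields $Z(t)\le C(1+t)^{-1/4}$.

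The only delicate point is rigor rather than estimates. The identities \eqref{eq:Z_repn}, \eqref{eq:M_varphi_repn} and \eqref{eq:M_varphi_der} and the positivity of $w$ are taken from Section~\ref{sec:pre}. One must still check that $M_\varphi(t)$ is absolutely continuous, so that integrating $-M_\varphi'$ is legitimate. I expect this to follow from the same derivation in \cite{CFQ}: the identity holds in integrated form, $M_\varphi(0)-M_\varphi(T)=\int_0^T(\cdots)\,dt$, which is all that the argument needs.
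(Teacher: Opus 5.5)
Your proposal is correct and follows the paper's proof essentially step by step. The paper proves the same Carlson--Levin type inequality $\int_0^\infty zf\,dz \le 2^{3/4}\bigl(\int_0^\infty zf^2\,dz\bigr)^{1/4}\bigl(\int_0^\infty z^2 f\,dz\bigr)^{1/2}$ by the same split at $R$. It applies this with $f=-u^z(0,z,t)$ to get $Z^4\le 64(-M_\varphi')M_\varphi^2$, integrates in time, and uses the monotonicity of $Z$ to obtain $Z(T)^4\le T^{-1}\int_0^T Z^4\,dt$. The only difference is cosmetic: the paper integrates $(-M_\varphi')M_\varphi^2$ exactly to get $\tfrac{64}{3}M_\varphi(0)^3$, whereas you first bound $M_\varphi(t)\le M_\varphi(0)$, which only changes the constant.
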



\begin{proof}[Proof of Lemma~\ref{lem:decay_Z}]
For any $f:[0,\infty)\to [0,\infty)$, we have the following inequality
\begin{equation*}
\int_0^{\infty} zf(z) \, dz \lesssim \left(\int_0^{\infty} zf(z)^2\, dz \right)^{1/4} \left(\int_0^{\infty} z^2f(z)\, dz \right)^{1/2},
\end{equation*}
as long as the right-hand side is finite. Indeed, assuming the integrals are finite, we first write 
\begin{align*}
\int_0^{\infty} zf(z) \, dz &= \int_0^{R} zf(z) \, dz + \int_R^{\infty} zf(z) \, dz \\
&\le \left(\int_0^R z \, dz \right)^{1/2} \left(\int_0^{R} zf(z)^2 \, dz\right)^{1/2} + \frac{1}{R} \int_R^{\infty} z^2f(z) \, dz\\
&\le \frac{R}{\sqrt{2}} \left(\int_0^{\infty} zf(z)^2 \, dz\right)^{1/2} + \frac{1}{R} \int_{0}^{\infty} z^2f(z) \, dz,
\end{align*}
and it remains to optimize the right-hand side over $R$. In particular, assuming $f$ is not identically $0$, choosing  
\[
R = \frac{ 2^{1/4} \left( \int_{0}^{\infty} z^2f(z) \right)^{1/2}}{\left(\int_0^{\infty} zf(z)^2 \, dz\right)^{1/4}}
\]
yields
\begin{equation} \label{eq:Carlson-Levin}
\int_0^{\infty} zf(z) \, dz \le 2^{3/4} \left(\int_0^{\infty} zf(z)^2\, dz \right)^{1/4} \left(\int_0^{\infty} z^2f(z)\, dz \right)^{1/2}.
\end{equation}

Applying the above inequality with $f(z)=-u^z(0,z,t)$, and noting that \eqref{eq:M_varphi_der}, \eqref{eq:Z_repn}, \eqref{eq:M_varphi_repn}, we get
\[
Z(t)^4 \le 64 \left(-M_{\varphi}'(t)\right) M_{\varphi}(t)^2.
\]
Since $M_{\varphi}$ is decreasing in $t$, integration with respect to $t$ yields
\[
\int_0^{\infty} Z(t)^4\, dt \le \frac{64}{3} M_{\varphi}(0)^3.
\]
Moreover, as $Z$ is also decreasing in $t$, we further obtain 
\begin{equation} \label{eq:decay_Z} 
Z(T)^4 \le \frac{1}{T}\int_0^{T} Z(t)^4 \, dt \le \frac{64 M_{\varphi}(0)^3}{3T}. 
\end{equation}
Since $M_{\varphi}(0)<\infty$ can be treated as a constant depending on the initial data, we conclude the proof of the lemma.
\end{proof}


\begin{remark} \label{rmk:Carlson-Levin}
The inequality \eqref{eq:Carlson-Levin} is a special case of Levin's generalization of Carlson's inequality; see Levin~\cite{Levin}. In fact, the sharp constant is $(4/3)^{1/4}$, which can be obtained by considering the corresponding variational problem. For our needs, however, any positive constant suffices.
\end{remark}


We are now ready to prove Theorem~\ref{thm:no_exp}. We recall that the initial data satisfy the assumptions of Proposition~\ref{prop:upper_bound}, together with the additional conditions \eqref{eq:odd-in-z} and $M_{\varphi}(0)<\infty$. 

\begin{proof}[Proof of Theorem~\ref{thm:no_exp}]
Noting that $a(z,t)$ is uniformly bounded for all $z\in \mathbb{R}$ and $t\ge 0$, we define the axis strain maximum by $b(t)$, i.e.,
\[
b(t):= \sup_{z\ge 0} \left| a(z,t) \right|.
\]
 For each $t$, we choose $z_t$ satisfying
\[
|a(z_t,t)| \ge \frac{3b(t)}{4}.
\]
From the estimates of Lemma~\ref{lem:axis_strain}, for any $(r,z) \in Q_{\lambda}^{+}:=\{0<r<\lambda, \, 0<z-z_t<\lambda\}$ we have
\begin{align*}
\left| \frac{u^r(r,z,t)}{r} \right| &\ge |a(z,t)| - C_p \, A_p \, \lambda^{\alpha_p} \\
&\ge |a(z_t,t)| - 2C_p \, A_p \, \lambda^{\alpha_p}\ge \frac{3b(t)}{4} - 2C_p \, A_p \, \lambda^{\alpha_p}.
\end{align*}
Choosing 
\[
\lambda=\lambda (t):= \left( \frac{b(t)}{8C_p \, A_p} \right)^{1/\alpha_p},
\]
we ensure that 
\[
\left| \frac{u^r(r,z,t)}{r} \right| \ge \frac{b(t)}{2} \qquad \text{for all } (r,z) \in Q_{\lambda}^{+}.
\]
Using this in the vertical moment derivative \eqref{eq:Z_der} yields
\[
\!-Z'(t) \ge \iint_{\Pi^{+}} \left|\frac{u^r}{r} \right|^{2} r\, drdz \ge \iint_{Q_{\lambda}^{+}} \left|\frac{u^r}{r} \right|^{2} r\, drdz \ge \frac{b(t)^2 \, \lambda^3}{8} = \frac{b(t)^{2+\frac{3}{\alpha_p}}}{8 (8C_pA_p)^{\frac{3}{\alpha_p}}}.
\]
Substituting $\alpha_p = 1-3/p$, changing the time parameter with $s$ and integrating over $[t,2t]$, we obtain
\begin{equation} \label{eq:remark_note}
\int_t^{2t} b(s)^{\frac{5p-6}{p-3}}\, ds \le 8 (8C_pA_p)^{\frac{3p}{p-3}} \left(Z(t) - Z(2t)\right)
\end{equation}
In fact, H\"{o}lder's inequality, together with the decay estimate $Z(t) \lesssim_{\omega_0} t^{-1/4}$ from Lemma~\ref{lem:decay_Z}, gives
\begin{equation*} 
\int_t^{2t} b(s) \, ds  \le C (C_pA_p)^{\frac{3p}{5p-6}} \, t^{\frac{3(5p-3)}{4(5p-6)}}. 
\end{equation*}
As this holds for all $t\ge 0$, summing up over dyadic intervals $(2^{-j-1}t, 2^{-j}t]$ with $j=0,1,2,...$, we also get
\begin{equation} \label{eq:b_t_decay1}
\int_0^t b(s) \, ds \le C (C_pA_p)^{\frac{3p}{5p-6}} \, t^{\frac{3(5p-3)}{4(5p-6)}} \quad \text{for all } t\ge 0.
\end{equation}
Here, since $0<\frac{p-3}{5p-6}<\frac{1}{5}$ and $\frac{3}{5}<\frac{3p}{5p-6}<1$, we absorbed several constants into $C>0$, and now $C$ depends on the initial data through $M_{\varphi} (0)$, coming from \eqref{eq:decay_Z}. 

We fix an observation time $T$, and let $\rho=\rho(T)$ to be chosen later. Consider a single particle trajectory $(\phi_t^r(r_0,z_0),\phi_t^z(r_0,z_0))$ with $r_0>0$, $z_0\ge 0$, that never leaves the region $\{r\le \rho\}$ during the period $[0,T]$. Then, for all $t\in [0,T]$, by the flow map ODE and \eqref{eq:ar_est}, we get
\[
\log \frac{\phi_t^r(r_0,z_0)}{r_0} = \int_0^{t} \frac{u^r(\phi_s(r_0,z_0),s)}{\phi_s^r(r_0,z_0)} \, ds \le \int_0^t b(s) \, ds + C_pA_p \, \rho^{\alpha_p}t.
\]
With the choice
\begin{equation*}
\rho = \rho(T) := (1+T)^{-1/\alpha_p},
\end{equation*} 
we now apply \eqref{eq:b_t_decay1} with $t\le T$ to find that
\[
\log \frac{\phi_t^r(r_0,z_0)}{r_0} \le C (C_pA_p)^{\frac{3p}{5p-6}} \, t^{\frac{3(5p-3)}{4(5p-6)}} + C_pA_p.
\]
Consequently, 
\begin{equation} \label{eq:b_t_decay2}
\phi_t^r(r_0,z_0) \le r_0 e^{\mathcal{B}_p(t)}, \qquad \mathcal{B}_p(t):= C (C_pA_p)^{\frac{3p}{5p-6}} \, t^{\frac{3(5p-3)}{4(5p-6)}} + C_pA_p
 \end{equation}
holds for all $t\le T$. 

We obtained \eqref{eq:b_t_decay2}, assuming the particle trajectory remains in $\{r\le \rho\}$ throughout $[0,T]$. If a particle starting in $\{r\le \rho\}$ leaves this region for the first time at some $t_{*} \le T$, then
\begin{equation*}
r_0 \ge \rho e^{-\mathcal{B}_p(t_{*})} \ge \rho e^{-\mathcal{B}_p(T)}.
\end{equation*}
Thus, such particles are initially separated from the axis by at least $\rho e^{-\mathcal{B}_p(T)}$. For these particles, as well as those with $r_0> \rho$, i.e., starting outside $\{r\le \rho\}$, we use the estimate \eqref{eq:phi_upper} from the proof of Proposition~\ref{prop:upper_bound}, which gives
\[
\phi_t^r (r_0,z_0) \le r_0 \left( 1 + \bigl(1+T\bigr)^{\frac{p}{p-3}} \mathcal{A}_p(t) \, e^{\mathcal{B}_p(T)} \right),
\]
with 
\[
\mathcal{A}_p(t) = \Tilde{C} \Bigl( \frac{p}{p-3} \Bigr)^{\frac{3}{5}}  (1+A_p)^{\frac{2p}{3p-4}}  \bigl(1+t\bigr)^{\frac{4(p-1)}{3p-4}}
\]
and $\mathcal{B}_p(T)$ is given in \eqref{eq:b_t_decay2}, where $\Tilde{C}>0$ depends only on $E_0$ and $P_2(0)$. Putting this together with \eqref{eq:b_t_decay2}, we finally arrive at
\[
\phi_t^r(r_0,z_0) \le r_0 \left( 1 + \bigl(1+T\bigr)^{\frac{p}{p-3}} \mathcal{A}_p(t) \right)  e^{\mathcal{B}_p(T)},
\]
which holds for all $(r_0,z_0) \in \Pi^{+}$ and $t\le T$. The transport structure of $\omega/r$ then implies 
\[
\left| \omega(\phi_t(r_0,z_0),t \right| = \left| \omega_0 (r_0,z_0)\right| \, \frac{\phi_t^r(r_0,z_0)}{r_0} \le \left| \omega_0 (r_0,z_0)\right|  \left( 1 + \bigl(1+T\bigr)^{\frac{p}{p-3}} \mathcal{A}_p(t) \right)  e^{\mathcal{B}_p(T)}.
\]
In particular, at $t=T$ and noting that $T$ was an arbitrary observation time, we conclude
\begin{equation} \label{eq:b_t_decay4}
\|\omega (\cdot,T)\|_{L^{\infty}} \le \|\omega_0\|_{L^{\infty}} \Bigl(1 + (1+T)^{\frac{p}{p-3}} \mathcal{A}_p(T) \Bigr) e^{\mathcal{B}_p(T)} \quad \text{for all } T\ge 0.
\end{equation}

For each fixed $3<p<\infty$, we note that the term $e^{\mathcal{B}_p(T)}$ growth subexponentially in $T$, and therefore it dominates the term inside the bracket as its growth is at most polynomial. Indeed, by enlarging the constant $\Tilde{C} >0$ in $\mathcal{A}_p$ if necessary, we can write 
\[
1 + (1+T)^{\frac{p}{p-3}} \mathcal{A}_p(T) \le \Tilde{C} \Bigl( \frac{p}{p-3} \Bigr)^{\frac{3}{5}} (1+A_p)^{\frac{2p}{3p-4}}  \bigl(1+T\bigr)^{\frac{4(p-1)}{3p-4}+\frac{p}{p-3}}.
\]
Then, denoting $\gamma_p:=\frac{3(5p-3)}{4(5p-6)}$, it is clear that $\gamma_p<1$, and we have $ \log (1+T) \le \frac{T^{\gamma_p}}{\gamma_p}$, which gives
\[
(1+T)^{\frac{4(p-1)}{3p-4}+\frac{p}{p-3}} \le \exp \Bigl(\left(\tfrac{4(p-1)}{3p-4}+\tfrac{p}{p-3}\right) \gamma_p^{-1} \, T^{\gamma_p}\Bigr).
\] 
Since $\frac{2p}{3p-4} < \frac{6}{5}$, $\frac{4(p-1)}{3p-4} < \frac{8}{5}$, $\gamma_p^{-1} < \frac{4}{3}$, and $\frac{p}{p-3}>1$, we get 
\[
1 + (1+T)^{\frac{p}{p-3}} \mathcal{A}_p(T) \le \Tilde{C}  \Bigl( \frac{p}{p-3} \Bigr)^{\frac{3}{5}} \! (1+A_p)^{\frac{6}{5}} \exp \Bigl(\tfrac{4p}{p-3}  \,T^{\gamma_p} \Bigr).
\]
Moreover, for $\mathcal{B}_p$, we may use $\frac{3p}{5p-6} < 1$ and $C_p \lesssim p \Bigl(\frac{p}{p-3}\Bigr)^{\frac{2}{3}}$, which gives 
\[
\exp \left(\mathcal{B}_p(T) \right) \le \exp ( C_pA_p ) \exp\Bigl(C \, p \! \left(\tfrac{p}{p-3}\right)^{\frac{2}{3}} \! (1+A_p) \, T^{\gamma_p} \Bigr) 
\]
Putting everything back into \eqref{eq:b_t_decay4}, we obtain 
\[
\|\omega (\cdot,T)\|_{L^{\infty}} \le \|\omega_0\|_{L^{\infty}} C_1\,  e^{\left( C_2 \, T^{\gamma_p} \right)} \quad \text{for all } T\ge 0,
\]
where
\begin{align*}
C_1 &:= \Tilde{C}  \Bigl( \frac{p}{p-3} \Bigr)^{\frac{3}{5}} (1+A_p)^{\frac{6}{5}}  \exp ( C_pA_p ), \\
C_2 &:= C \, p \! \left(\frac{p}{p-3}\right)^{\frac{2}{3}}  (1+A_p) + \frac{4p}{p-3}.
\end{align*}
Note that $C_1$ depends on $E_0$, $P_2(0)$, $A_p$, and $p$, while $C_2$ depends on $M_{\varphi}(0)$, $A_p$, and $p$. 

This completes our proof.
\end{proof}

We conclude this section by noting that, if one assumes only $Z(0)<\infty$, then using simply $Z(t) \le Z(0)$ in \eqref{eq:remark_note} and summing up over dyadic cubes yields
\[
\int_0^{t} b(s) \, ds \le C(C_pA_p)^{\frac{3p}{5p-6}}\, t^{\frac{4p-3}{5p-6}} \qquad \text{for all } t\ge 0.
\]
The subsequent steps proceed in the same way, leading to the conclusion with the exponent $\tilde{\gamma}_p:=\frac{4p-3}{5p-6}$ as stated in Remark~\ref{rmk:with_Z}.


\section*{Acknowledgements} 
The author would like to thank In-Jee Jeong (KIAS) and Yao Yao (NUS) for introducing him to this problem and for many helpful discussions.

\section*{Use of AI tools} 
The idea of using vertical confinement to bound the axis strain maximum in Section~\ref{sec:no_exp} emerged during an interaction with OpenAI's ChatGPT (GPT-5.6 Sol). The author independently developed and verified the resulting argument. 
All other ideas, arguments, and proofs were developed by the author without the use of AI tools. The author takes full responsibility for all content and the correctness of the proofs.

\appendix
\section{Additional properties of $F$ and its derivatives} \label{app:F_add}

We provide some additional properties of $F$ and its derivatives. Namely, we prove the inequality \eqref{eq:F_ineq} and apply it to derive the strict positivity and monotonicity of $G$. 

\begin{lemma} \label{lem:F_add}
For every $k\ge 0$, the function $(-1)^k F^{(k)}(s)$ is positive and strictly decreasing on $s>0$. Moreover, we have  
\begin{equation} \label{eq:F_add}
k(-1)^k F^{(k)} (s) < s(-1)^{k+1} F^{(k+1)}  (s)  < \Bigl(k + \frac{3}{2}\Bigr)(-1)^k F^{(k)} (s)
\end{equation}
for all $s>0$.
\end{lemma}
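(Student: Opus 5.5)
The plan is to work directly from the integral representation
\[
F(s)=\int_0^\pi \cos\theta\,(2(1-\cos\theta)+s)^{-1/2}\,d\theta .
\]
Differentiating under the integral sign, which is legitimate for $s>0$, gives
\[
(-1)^kF^{(k)}(s)=c_k\int_0^\pi \cos\theta\,(2(1-\cos\theta)+s)^{-1/2-k}\,d\theta,\qquad c_k=\prod_{j=0}^{k-1}\bigl(\tfrac12+j\bigr)>0 .
\]
Positivity rests on the factor $\cos\theta$, which changes sign at $\pi/2$. Pair $\theta$ with $\pi-\theta$: the weight $w(\theta)=(2(1-\cos\theta)+s)^{-1/2-k}$ is strictly larger at $\theta$ than at $\pi-\theta$ for $\theta\in(0,\pi/2)$. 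Hence
\[
\int_0^\pi \cos\theta\, w\,d\theta=\int_0^{\pi/2}\cos\theta\,\bigl[w(\theta)-w(\pi-\theta)\bigr]\,d\theta>0 .
\]
This shows $(-1)^kF^{(k)}>0$ for every $k$. Strict decrease of $(-1)^kF^{(k)}$ then follows immediately, since its derivative is $-(-1)^{k+1}F^{(k+1)}<0$.

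For \eqref{eq:F_add}, write $x=2(1-\cos\theta)\in[0,4]$ and $g_k(s)=\int_0^\pi\cos\theta\,(x+s)^{-1/2-k}\,d\theta$. The middle quantity is $s(-1)^{k+1}F^{(k+1)}=c_k(k+\tfrac12)\,s\,g_{k+1}(s)$. The claimed inequality is therefore equivalent to
\[
k\,g_k< (k+\tfrac12)\,s\,g_{k+1}<(k+\tfrac32)\,g_k .
\]
Using $s=(x+s)-x$, we have $s\,g_{k+1}=g_k-h_{k+1}$, where $h_{k+1}=\int\cos\theta\, x\,(x+s)^{-3/2-k}\,d\theta$. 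The upper bound reduces to $-\tfrac{1}{2}g_k... $, more precisely to $(k+\tfrac12)h_{k+1}>-g_k$. The lower bound reduces to $(k+\tfrac12)h_{k+1}<\tfrac12 g_k$.

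I will handle both by integration by parts in $\theta$. Since $\partial_\theta (x+s)^{-1/2-k}=-(k+\tfrac12)\,2\sin\theta\,(x+s)^{-3/2-k}$, expressions like $\int \sin^2\theta\,(x+s)^{-3/2-k}$ can be traded for $\int\cos\theta\,(x+s)^{-1/2-k}$. The boundary terms vanish because $\sin\theta=0$ at $0$ and $\pi$. Note also that $\cos\theta\, x=2\cos\theta-2\cos^2\theta$ and $\sin^2\theta=x-x^2/4$. Combining these identities, I expect to write each of the two differences as a positive multiple of an integral of the form $\int_0^\pi \cos\theta\cdot(\text{positive decreasing function of }x)\,d\theta$, or as a manifestly positive integral such as one with integrand $\sin^2\theta\,(x+s)^{-\cdots}$. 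Positivity would then follow from the same pairing argument as above.

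The main obstacle is finding the right combination so that the residual integrand is $\cos\theta$ times a monotone weight. The lower bound ($k\,g_k<\dots$), i.e. $(k+\tfrac12)h_{k+1}<\tfrac12 g_k$, is the more delicate one, and that is where I would try the integration-by-parts identity first. As a fallback, I would check the endpoint asymptotics, which give ratios $(-1)^{k+1}sF^{(k+1)}/((-1)^kF^{(k)})\to k$ as $s\to0$ and $\to k+\tfrac32$ as $s\to\infty$; these confirm the constants are sharp. I would then try to prove monotonicity of this ratio in $s$ via a Cauchy--Schwarz / log-convexity argument on the measure $\cos\theta\,d\theta$ restricted through the pairing.
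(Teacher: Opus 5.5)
Your positivity and monotonicity argument is complete and matches the paper's first (folded) representation. Your reductions of \eqref{eq:F_add} to $(k+\tfrac12)h_{k+1}<\tfrac12 g_k$ and $g_k+(k+\tfrac12)h_{k+1}>0$ are also correct. The gap is that the two-sided inequality, which is the real content of the lemma, is never proved. You only say you ``expect'' a suitable combination to exist, and you misjudge which side is delicate. Your fallback (monotonicity of the ratio via Cauchy--Schwarz or log-convexity) is unnecessary, and as stated it is unjustified, because $\cos\theta\,d\theta$ is a signed measure.

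The missing step is one integration by parts plus the identities you already listed. With $m=k+\tfrac12$, integrating by parts gives
\[
g_k=2m\int_0^\pi \sin^2\theta\,(x+s)^{-m-1}\,d\theta .
\]
Since $\sin^2\theta-x\cos\theta=(x-\tfrac{x^2}{4})-(x-\tfrac{x^2}{2})=\tfrac{x^2}{4}$, this yields
\[
\tfrac12 g_k-m\,h_{k+1}=m\int_0^\pi(\sin^2\theta-x\cos\theta)(x+s)^{-m-1}\,d\theta=m\int_0^\pi(1-\cos\theta)^2(x+s)^{-m-1}\,d\theta>0 .
\]
So the lower bound is immediate and needs no pairing. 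Folded onto $[0,\pi/2]$, this integrand is exactly the paper's $(1-\cos\theta)^2A^{-2k-3}+(1+\cos\theta)^2B^{-2k-3}$.

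For the upper bound, integrating by parts only $g_k$ is not enough. It gives $g_k+m h_{k+1}=m\int_0^\pi x(3-x)(x+s)^{-m-1}\,d\theta$, whose integrand changes sign at $\theta=2\pi/3$, so positivity is not visible in that form. Instead, also convert $g_{k+1}$ to its $\sin^2$ form,
\[
g_{k+1}=2(m+1)\int_0^\pi\sin^2\theta\,(x+s)^{-m-2}\,d\theta .
\]
Then use $g_k+m h_{k+1}=(m+1)g_k-m\,s\,g_{k+1}$ to get
\[
(m+1)g_k-m\,s\,g_{k+1}=2m(m+1)\int_0^\pi\sin^2\theta\,x\,(x+s)^{-m-2}\,d\theta>0 .
\]
This is the paper's argument: it compares $s$ with $A^2$ and $B^2$ inside the $\sin^2$ representation. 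With these two computations, your plan becomes a complete proof along the same lines as the paper.
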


\begin{proof}[Proof of Lemma~\ref{lem:F_add}]
Recall that the function $F$ is given by 
\[
F(s) = \int_{0}^{\pi} \frac{\cos \theta}{(2(1-\cos \theta) + s)^{1/2}} \, d\theta, \qquad s>0.
\]
A direct differentiation implies
\[
F^{(k)}(s) = \Bigl(-\frac{1}{2}\Bigr) \Bigl(-\frac{3}{2}\Bigr) \cdots \Bigl(-\frac{2k-1}{2}\Bigr) \int_0^{\pi} \frac{\cos \theta}{(2(1-\cos \theta)+s)^{(2k+1)/2}}\, d\theta,
\]
and simplifying this, we write
\[
(-1)^{k} F^{(k)}(s) = \frac{(2k)!}{4^k k!} \int_0^{\pi} \frac{\cos \theta}{(2(1-\cos \theta)+s)^{(2k+1)/2}}\, d\theta
\] 
for every $k\ge 0$.
Introducing the notations 
\[
A:= \sqrt{2(1-\cos \theta) + s}, \qquad B:=\sqrt{2(1+\cos \theta) +s},
\]
where $\theta\in [0,\pi/2]$ and $s>0$, we derive the following two representations
\begin{align} \label{eq:F_add1}
\begin{split}
(-1)^{k} F^{(k)}(s) &= \frac{(2k)!}{4^k k!} \int_0^{\pi/2} \Bigl(\frac{1}{A^{2k+1}} - \frac{1}{B^{2k+1}}\Bigr) \cos \theta \, d\theta \\
&= \frac{(2k+1)!}{4^k k!} \int_0^{\pi/2} \Bigl(\frac{1}{A^{2k+3}} + \frac{1}{B^{2k+3}}\Bigr) \sin^2 \theta \, d\theta.
\end{split}
\end{align}
Here, the second representation follows by integration by parts. This proves the positivity of $(-1)^k F^{(k)}(s)$ on $s>0$, and the positivity by itself implies monotonicity for $s>0$. 

Moving to the inequality \eqref{eq:F_add}, its right-hand side simply follows by $s\le A^2 \le B^2$. Indeed,
\begin{align*}
s(-1)^{k+1} F^{(k+1)}  (s) &= \frac{(2k+3)!}{4^{k+1} (k+1)!} \int_0^{\pi/2} \Bigl(\frac{s}{A^{2k+5}} + \frac{s}{B^{2k+5}}\Bigr) \sin^2 \theta \, d\theta \\
&\le \frac{(2k+3)!}{4^{k+1} (k+1)!} \int_0^{\pi/2} \Bigl(\frac{1}{A^{2k+3}} + \frac{1}{B^{2k+3}}\Bigr) \sin^2 \theta \, d\theta \\
&= \frac{2k+3}{2} (-1)^k F^{(k)}(s).
\end{align*}
For the left-hand side, we first simplify the inequality and see that it remains to show 
\[
\frac{2k}{2k+1} \int_0^{\pi/2} \Bigl(\frac{1}{A^{2k+1}} - \frac{1}{B^{2k+1}}\Bigr) \cos \theta \, d\theta  < s \int_0^{\pi/2} \Bigl(\frac{1}{A^{2k+3}} - \frac{1}{B^{2k+3}}\Bigr) \cos \theta \, d\theta
\]
In fact, using the second representation in \eqref{eq:F_add1}, we obtain
\begin{align*}
\frac{2k}{2k+1} \int_0^{\pi/2} \Bigl(\frac{1}{A^{2k+1}} - \frac{1}{B^{2k+1}}\Bigr) \cos \theta \, d\theta = \int_0^{\pi/2} &\Bigl(\frac{1}{A^{2k+1}} - \frac{1}{B^{2k+1}}\Bigr) \cos \theta \, d\theta \\
&- \int_0^{\pi/2} \Bigl(\frac{1}{A^{2k+3}} + \frac{1}{B^{2k+3}}\Bigr) \sin^2 \theta \, d\theta,
\end{align*}
and the inequality, we need to prove, becomes
\[
\int_0^{\pi/2} \left[\Bigl(\frac{1}{A^{2k+3}} + \frac{1}{B^{2k+3}}\Bigr) \sin^2 \theta - \Bigl(\frac{1}{A^{2k+1}} - \frac{1}{B^{2k+1}}\Bigr) \cos \theta + s \Bigl(\frac{1}{A^{2k+3}} - \frac{1}{B^{2k+3}}\Bigr) \cos \theta \right] d\theta > 0. 
\]
It suffices to show that the integrand is positive. Indeed, substituting $s = A^2-2(1-\cos \theta) = B^2 - 2(1+\cos \theta)$, the integrand simplifies to
\[
\frac{(1-\cos \theta)^2}{A^{2k+3}} + \frac{(1+\cos \theta)^2}{B^{2k+3}},
\]
which is clearly positive for $\theta\in [0,\pi/2]$. 
\end{proof}

The above lemma is useful in obtaining positivity and monotonicity of various combinations of $F$ and its derivatives. It can be applied to $F^{*}$ of \cite{ChJ}, as well as to $G$ defined in \eqref{eq:G_def}. Here, we particularly apply it to show that $G$ is positive and strictly decreasing.

We recall that $G$ is given by
\[ 
G(s):=-5F'(s) - 2sF''(s), \qquad s>0. 
\] 
Then applying \eqref{eq:F_add} with $k=1$ yields
\[
0 < G(s) < 3 \left(-F'(s)\right),
\]
which means $G$ is strictly positive and bounded by $-F'$, up to a constant. Furthermore, taking its derivative
\[
G'(s) = -7F''(s) - 2sF'''(s),
\]
we apply \eqref{eq:F_add} with $k=2$ to find that
\[
-3F''(s) < G'(s) < 0.
\]
This implies that $G$ is strictly decreasing too. 

By the asymptotics of $-F'$ and $F''$ at the endpoints, we also have 
\[
G (s) = \frac{3}{2s} + O(\log s), \qquad s\to 0^{+},
\]
and 
\[
G (s) = \frac{15\pi}{2 s^{7/2}} + O(s^{-9/2}), \qquad s\to +\infty.
\]
Combining this with the monotonicity of $G$, we obtain an estimate analogous to \eqref{eq:F_est},
\begin{equation} \label{eq:G_add}
G(s) \lesssim \min \left\{ s^{-1}, s^{-7/2} \right\}.
\end{equation}
Here, again, $G(s) \lesssim s^{-1}$ is better suited to the regime $s\lesssim 1$, while $G(s)\lesssim s^{-7/2}$ is better in $s\gtrsim 1$.


\begin{thebibliography}{10}

\bibitem{AHK2010}
\textsc{H.~Abidi, T.~Hmidi, S.~Keraani}.
\newblock On the global well-posedness for the axisymmetric {E}uler equations.
\newblock {\em Math. Ann.}, 347(1):15--41, 2010.

\bibitem{BCh}
\textsc{J.~Bang, A.~Cheskidov}.
\newblock Sharp ill-posedness of the {E}uler equations in {L}orentz spaces.
\newblock arXiv:2605.16502, 2026.

\bibitem{CFQ}
\textsc{D.~Cao, J.~Fan, G.~Qin}.
\newblock Linear growth of the vorticity maximum for axisymmetric {E}uler flows
  without swirl.
\newblock arXiv:2511.03171v4, 2006.

\bibitem{JChen1}
\textsc{J.~Chen}.
\newblock Asymptotically self-similar blowup for 3{D} incompressible {E}uler
  with ${C}^{1,1/3-}$ velocity {I}: ${C}^{\infty}$ 1{D} limiting profiles.
\newblock arXiv:2605.15149, 2026.

\bibitem{JChen2}
\textsc{J.~Chen}.
\newblock Asymptotically self-similar blowup for 3{D} incompressible {E}uler
  with ${C}^{1,1/3-}$ velocity {II}: 3{D} limiting profiles, blowup, and
  limiting behavior.
\newblock arXiv:2605.15130v2, 2026.

\bibitem{Chs1}
\textsc{S.~Childress}.
\newblock On vortex stretching and the global regularity of {E}uler flows {I}.
  {A}xisymmetric flow without swirl.
\newblock \url{https://math.nyu.edu/~childres/cocpaperone.pdf}, 2006.

\bibitem{Chs2}
\textsc{S.~Childress}.
\newblock Growth of anti-parallel vorticity in {E}uler flows.
\newblock {\em Phys. D}, 237(14-17):1921--1925, 2008.

\bibitem{ChGil}
\textsc{S.~Childress, A.~D. Gilbert}.
\newblock Eroding dipoles and vorticity growth for {E}uler flows in
  $\mathbb{R}^3$: the hairpin geometry as a model for finite-time blowup.
\newblock {\em Fluid Dyn. Res.}, 50(1):011418, 2018.

\bibitem{ChGilV}
\textsc{S.~Childress, A.~D. Gilbert, P.~Valiant}.
\newblock Eroding dipoles and vorticity growth for {E}uler flows in
  $\mathbb{R}^3$: I. {A}xisymmetric flow without swirl.
\newblock {\em J. Fluid Mech.}, 805:1--30, 2016.

\bibitem{ChJ}
\textsc{K.~Choi, I.-J. Jeong}.
\newblock On vortex stretching for anti-parallel axisymmetric flows.
\newblock {\em Amer. J. Math.}, 147(5):1251--1284, 2025.

\bibitem{CMZ}
\textsc{D.~C\'{o}rdoba, L.~Mart\'{i}nez-Zoroa, F.~Zheng}.
\newblock Finite time singularities to the 3{D} incompressible {E}uler
  equations for solutions in ${C}^{\infty}(\mathbb{R}^3\setminus \{0\})\cap
  {C}^{1,\alpha}\cap {L}^2$.
\newblock {\em Ann. PDE}, 11(2):19, 2025.

\bibitem{Danchin}
\textsc{R.~Danchin}.
\newblock Axisymmetric incompressible flows with bounded vorticity.
\newblock {\em Uspekhi Mat. Nauk}, 62(3(375)):73--94, 2007.

\bibitem{TamDo2019}
\textsc{T.~Do}.
\newblock On vorticity gradient growth for the axisymmetric 3{D} {E}uler
  equations without swirl.
\newblock {\em Arch. Ration. Mech. Anal.}, 234(1):181--209, 2019.

\bibitem{EY}
\textsc{K.~Egamberganov, Y.~Yao}.
\newblock Growth estimates for axisymmetric {E}uler equations without swirl.
\newblock arXiv:2512.13456, 2025.

\bibitem{El}
\textsc{T.~Elgindi}.
\newblock Finite-time singularity formation for ${C}^{1,\alpha}$ solutions to
  the incompressible {E}uler equations on $\mathbb{R}^3$.
\newblock {\em Ann. of Math. (2)}, 194(3):647--727, 2021.

\bibitem{ElGM}
\textsc{T.~Elgindi, T.-E. Ghoul, N.~Masmoudi}.
\newblock On the stability of self-similar blow-up for ${C}^{1,\alpha}$
  solutions to the incompressible {E}uler equations on $\mathbb{R}^3$.
\newblock {\em Camb. J. Math.}, 9(4):1035--1075, 2021.

\bibitem{FeSv}
\textsc{H.~Feng, V.~\v{S}ver\'{a}k}.
\newblock On the {C}auchy problem for axi-symmetric vortex rings.
\newblock {\em Arch. Ration. Mech. Anal.}, 215(1):89--123, 2015.

\bibitem{GMT}
\textsc{S.~Gustafson, E.~Miller, T.~P. Tsai}.
\newblock Growth rates for anti-parallel vortex tube {E}uler flows in three and
  higher dimensions.
\newblock {\em J. Math. Fluid Mech.}, 28(1):19, 2026.

\bibitem{KimJeong}
\textsc{J.~Kim, I.-J. Jeong}.
\newblock A simple ill-posedness proof for incompressible {E}uler equations in
  critical {S}obolev spaces.
\newblock {\em J. Func. Anal.}, 283(10):109673, 2022.

\bibitem{Levin}
\textsc{V.~I. Levin}.
\newblock Sharp constants in {C}arlson type inequalities.
\newblock {\em Dokl. Akad. Nauk SSSR}, 59:635--638, 1948.

\bibitem{LJ}
\textsc{D.~Lim, I.-J. Jeong}.
\newblock On the optimal rate of vortex stretching for axisymmetric {E}uler
  flows without swirl.
\newblock {\em Arch. Ration. Mech. Anal.}, 249(3):32, 2025.

\bibitem{M1986}
\textsc{A.~Majda}.
\newblock Vorticity and the mathematical theory of incompressible fluid flow.
\newblock {\em Comm. Pure Appl. Math.}, 39(S1):S187--S220, 1986.

\bibitem{MB}
\textsc{A.~J. Majda, A.~L. Bertozzi}.
\newblock {\em Vorticity and Incompressible Flow}, volume~27 of {\em Cambridge
  Texts in Applied Mathematics}.
\newblock Cambridge University Press, Cambridge, 2002.

\bibitem{OpenAI_Euler}
\textsc{OpenAI}.
\newblock Finite time blowup for the {E}uler equation.
\newblock preprint, 2026.

\bibitem{SRaymond}
\textsc{X.~S. Raymond}.
\newblock Remarks on axisymmetric solutions of the incompressible {E}uler
  system.
\newblock {\em Comm. PDE}, 19(1-2):321--334, 1994.

\bibitem{SWZ2}
\textsc{F.~Shao, D.~Wei, P.~Zhang, Z.~Zhang}.
\newblock Self-similar blow-up solutions of $d$-dimensional incompressible
  {E}uler equations with ${C}^{1,(1-2/d)-}$ velocity.
\newblock arXiv:2605.19716v2, 2026.

\bibitem{SWZ1}
\textsc{F.~Shao, D.~Wei, Z.~Zhang}.
\newblock Global regularity of axisymmetric {E}uler equations without swirl in
  higher dimensions.
\newblock {\em Acta Math. Sin. (Engl. Ser.)}, 42(3):663--679, 2026.

\bibitem{ShariffL1992}
\textsc{K.~Shariff, A.~Leonard}.
\newblock Vortex rings.
\newblock {\em Annu. Rev. Fluid Mech}, 24:235--279, 1992.

\bibitem{ShYa}
\textsc{T.~Shirota, T.~Yanagisawa}.
\newblock Note on global existence for axially symmetric solutions of the
  {E}uler system.
\newblock {\em Proc. Japan Acad. Ser. A Math. Sci.}, 70(10):299--304, 1994.

\bibitem{Shkoller}
\textsc{S.~Shkoller}.
\newblock Incompressible {E}uler blowup at the ${C}^{1,1/3}$ threshold.
\newblock arXiv:2603.10945v3, 2026.

\bibitem{UY}
\textsc{M.~R. Ukhovskii, V.~I. Yudovich}.
\newblock Axially symmetric flows of ideal and viscous fluids filling the whole
  space.
\newblock {\em J. Appl. Math. Mech.}, 32:52--61, 1968.

\bibitem{Svnotes}
\textsc{V.~\v{S}ver\'{a}k}.
\newblock Selected topics in fluid mechanics.
\newblock \url{https://www-users.cse.umn.edu/~sverak/course-notes2011.pdf}, 2011.

\bibitem{FZ_openai}
\textsc{F.~Zheng}.
\newblock Finite-time blowup of the {E}uler equation without forcing.
\newblock arXiv:2610.02509, 2026.

\end{thebibliography}

\end{document}